\documentclass[11pt, reqno]{amsart}
\usepackage{amssymb,amsmath,latexsym,amsthm}
\usepackage[all]{xy}
\usepackage{pstricks}\usepackage{stmaryrd}\usepackage{hyperref}

\newcommand{\comments}[1]{}
\newcommand\ff{\mathcal{F}}
\newcommand\gc{\mathcal{G}}
\newcommand\ee{\mathcal{E}}
\newcommand\cc{\mathbb{C}}
\newcommand\zz{\mathbb{Z}}
\newcommand\oo{\mathcal{O}}
\newcommand\Hom{{\rm Hom}}
\newcommand\Ext{{\rm Ext}}
\newcommand\pp{\mathbb{P}}
\newcommand\po{{\mathbb{P}^1}}
\newcommand\op{\mathcal{O}_{\mathbb{P}^1}}
\newcommand\hk{{\mathbb{F}_k}}

\newcommand\mo{$\pi_*\oo_Y$-module }
\newcommand\linecl{{[\mathbb{P}^1]}}

\newtheorem{thm}{Theorem}[section]
\newtheorem{conj}{Conjecture}[section]
\newtheorem{lem}[thm]{Lemma}
\newtheorem{prop}[thm]{Proposition}
\newtheorem{cor}[thm]{Corollary}
\theoremstyle{definition}
\newtheorem{ex}[thm]{Example}
\newtheorem{defn}[thm]{Definition}
\newtheorem{convention}[thm]{Convention}
\theoremstyle{remark}
\newtheorem{rmk}[thm]{Remark}

\begin{document}

\title{Genus zero BPS invariants for local $\mathbb{P}^1$}
\author{Jinwon Choi}
\address{Department of Mathematics, University of Illinois at
Urbana-Champaign} \email{choi29@illinois.edu}

\begin{abstract}
We study the equivariant version of the genus zero BPS invariants of
the total space of a rank 2 bundle on $\mathbb{P}^1$ whose
determinant is $\mathcal{O}_{\mathbb{P}^1}(-2)$. We define the equivariant genus
zero BPS invariants by the residue integrals on the moduli space of
stable sheaves of dimension one as proposed by Sheldon Katz
\cite{katz_gv}. We compute these invariants for low degrees by
counting the torus fixed stable sheaves. The results agree with the
prediction in local Gromov-Witten theory studied in \cite{localgw}.
\end{abstract}

\comments{
\begin{keyword}
BPS invariant \sep moduli space \sep equivariant sheaf \sep toric
variety
\MSC 14N35
\end{keyword}
}

\maketitle

\section{Introduction}
The 0-pointed genus $g$ Gromov-Witten invariant for a Calabi-Yau
three-fold $X$ in the curve class $\beta\in H_2(X,\zz)$ is defined as
the degree of the virtual cycle of the moduli space of stable maps
to $X$.
\[ N_{\beta}^g(X):= {\rm deg }[\overline{M}_{g,0}(X,\beta)]^{\rm
vir}.\] By the BPS state counts in M-theory, Gopakumar and Vafa
\cite{gv} proposed integer-valued invariants $n_\beta^g(X)$ of $X$,
called the \emph{BPS invariants}, which are related to the
Gromov-Witten invariants by the Gopakumar-Vafa formula
\[\sum_{\beta, g} N_{\beta}^g(X) q^\beta \lambda^{2g-2}
=\sum_{\beta, g, k}n^g_\beta(X) \frac{1}{k}\left(2
\sin\left(\frac{k\lambda}{2}\right)\right) ^{2g-2}q^{k\beta}.\]

A priori, the BPS invariants defined by the above formula are rational
numbers because the Gromov-Witten invariants are rational numbers.
The \emph{integrality conjecture} is an assertion that they are
integers.

The genus zero part of the above formula is
\begin{equation}\label{gv}
 N_\beta^0(X)=\sum_{m|\beta} \frac{n_{\beta/m}^0(X)}{m^3}.
\end{equation}

Katz \cite{katz_gv} proposed a mathematical definition for
the genus zero BPS invariants. He considered the Donaldson-Thomas
type invariants of the moduli space of stable sheaves of class $\beta$ and Euler characteristic one.
He showed \eqref{gv} holds for embedded contractible rational
curves. Shortly thereafter, Li and Wu \cite{li_gv}
studied K3 fibred local Calabi-Yau three-folds and verified
\eqref{gv} for curve classes $d\beta_0$ where $d\le 5$ and $\beta_0$
generates the Picard group of the central fiber.

Bryan and Gholampour \cite{brygho} studied the
equivariant version of BPS invariant for the resolution of ADE
polyhedral singularities $\cc^3/G$. As the moduli space of sheaves
is noncompact, the virtual cycle is not well defined. But using a
natural $\cc^*$-action induced from an action on $\cc^3/G$, they
defined the BPS invariants via equivariant residue integrals of the
virtual cycle at the fixed locus. They proved the equivariant BPS invariants so defined are in agreement with the prediction of equivariant Gromov-Witten theory via formula \eqref{gv}. In this paper, we follow this
approach and study the equivariant version of BPS invariants for
local $\po$.

Local $\po$ in this paper is the total space $X$ of rank 2 vector bundle $$E\simeq
\op(k)\oplus\op(-2-k)$$ on $\po$. Since ${\rm det}E\simeq
K_\po\simeq \op(-2)$, $X$ is a noncompact Calabi-Yau three-fold in
the sense that its canonical bundle is trivial.

The Gromov-Witten theory of $X$ is studied by Bryan and Pandharipande \cite{localgw}. They used the natural
$(\cc^*)^2$-action on $X$ via scalar multiplication on each fiber,
and computed residue Gromov-Witten invariants by localization and
degeneration methods. After taking anti-diagonal subtorus of
$(\cc^*)^2$, they got a closed formula for the Gromov-Witten
partition function \cite[Cor. 7.2]{localgw}.

We use a torus action for which the torus also acts nontrivially on
the base curve $\po$. By Calabi-Yau condition, our action restricts
to the action of their anti-diagonal subtorus(Section 2). So, we expect the genus 0 Gopakumar-Vafa formula \eqref{gv} holds for the
total space $X$ of $E$.
\begin{conj}[Equivariant GW/GV correspondence]\label{GV}
For $\beta=d[\po]\in H_2(X,\zz)$, let $N_{d}^{\rm GW}(k)$ be the
genus 0 local Gromov-Witten invariant computed in \cite{localgw} and
$n_d(k)$ be the equivariant local BPS invariant defined by the
residue integral in Definition~\ref{bps}. Then, the Gopakumar-Vafa
formula
$$  N_{d}^{\rm GW}(k)=\sum_{m|d} \frac{n_{d/m}(k)}{m^3}$$ holds.
\end{conj}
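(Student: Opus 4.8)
The plan is to establish the identity degree by degree for small $d$ rather than in closed form; this is presumably why it is stated as a conjecture, with the low-degree cases serving as evidence. The genus zero local Gromov-Witten invariants $N_d^{\mathrm{GW}}(k)$ are already known explicitly from \cite{localgw}, and since \eqref{gv} is triangular in $d$ it inverts by M\"obius inversion to $n_d(k)=\sum_{m\mid d}\mu(m)\,m^{-3}\,N^{\mathrm{GW}}_{d/m}(k)$. Thus for each fixed $d$ the content of the conjecture is the single numerical identity asserting that the equivariant residue integral of Definition~\ref{bps} equals this predicted value, so it suffices to compute the left-hand residue integral directly and compare.

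First I would give a concrete description of the moduli space $M_d$ of stable sheaves $F$ on $X$ with $[\,\mathrm{supp}\,F\,]=d\linecl$ and $\chi(F)=1$. Because the only proper curves in $X$ lie in the zero section $\po$, such an $F$ is supported on an infinitesimal thickening of the zero section in the fiber directions of $E\simeq\op(k)\oplus\op(-2-k)$. Pushing forward along the projection $\pi\colon X\to\po$ presents $F$ as a \mo in which two commuting nilpotent endomorphisms, induced by the two fiber coordinates of $E$, encode the thickening. This reduces the classification of sheaves to a linear-algebraic classification of such module structures, with the stability condition and the normalization $\chi(F)=1$ cutting out $M_d$.

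Next I would use the torus $T$ acting on $X$ --- acting nontrivially on the base $\po$, as set up in Section 2 --- and apply equivariant localization to the residue integral. The $T$-fixed sheaves are precisely the module structures homogeneous for the induced weight grading, and these are indexed by finite combinatorial data: the splitting type of $\pi_*F$ together with Young-diagram-type labelings recording the thickening in each of the two normal directions over the two base-fixed points. For each fixed sheaf (or fixed locus) I would compute the virtual tangent space $\Ext^1(F,F)-\Ext^0(F,F)$ underlying Definition~\ref{bps}, decompose it into $T$-weights, form the equivariant Euler-class ratio, and sum these local contributions to obtain $n_d(k)$. Comparing with the inverted Gromov-Witten prediction then completes the verification for each $d$.

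The main obstacle is twofold. Enumerating all $T$-fixed stable sheaves --- including the non-reduced and embedded-point configurations --- and checking stability for each becomes combinatorially heavy as $d$ grows, which is exactly why the comparison is feasible only in low degree. More delicate is the equivariant obstruction theory: one must compute the relevant $\Ext$-groups of these $\pi_*\oo_Y$-modules, treat any positive-dimensional fixed components through their virtual normal bundles, and verify that the $T$-weights combine so that the residue integral is well defined and evaluates to a number rather than a nontrivial function of the equivariant parameters. Finally, matching conventions with \cite{localgw} requires tracking how $T$ restricts to their anti-diagonal subtorus so that the two theories' equivariant parameters are identified consistently.
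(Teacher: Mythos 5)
Your overall strategy --- M\"obius-inverting the predicted $N_d^{\rm GW}(k)$ from \cite{localgw}, localizing with respect to $T$, and enumerating fixed sheaves degree by degree --- is indeed the paper's strategy, and you correctly identify why the statement remains a conjecture (the paper verifies $d=1,2,3$ for all $k$ and $d=4$ only for $k\le 100$). But there are two genuine gaps in your setup. First, your description of the moduli space is wrong: you model a stable sheaf as a $\pi_*\oo_X$-module with \emph{two} commuting nilpotent endomorphisms coming from the two fiber directions of $E\simeq\op(k)\oplus\op(-2-k)$. The paper's Lemma~\ref{sup} shows this never happens: the ideal sheaf of $Y={\rm Tot}(\op(k))$ is $L^{2+k}$ with $2+k>0$, so stability forces the map $\ff\otimes L^{2+k}\to\ff$ to vanish and the scheme-theoretic support of every stable sheaf lies in the \emph{surface} $Y$. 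Only one thickening direction survives, the data reduce to a single chain of maps $\phi_j\colon\ff_j\to\ff_{j+1}\otimes\op(k)$ (Theorem~\ref{m}), and this is precisely what makes Kool's classification of equivariant sheaves on toric surfaces applicable. Without this reduction your enumeration is a threefold problem and the combinatorics you sketch (splitting type plus one chain of monomial maps) would not match the objects you claim to classify.

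Second, you plan to compute the residue integral of Definition~\ref{bps} weight by weight, decomposing virtual tangent spaces and virtual normal bundles into $T$-weights. The paper avoids this entirely: it embeds $Y$ into the Hirzebruch surface $\hk$ and shows $M_d(k)$ is an open subscheme of the smooth projective moduli space $M_\hk(d)$ of dimension $kd^2+1$, the key point being $\Ext^2(\ff,\ff)\simeq\Hom(\ff,\ff\otimes K)^*=0$ by stability since $c_1(\ff)\cdot K=-d(k+2)<0$ (Proposition~\ref{prop:smoothness}). Smoothness plus the symmetric obstruction theory then gives $n_d(k)=(-1)^{kd^2+1}e_{\rm top}(M_d(k))$, and $e_{\rm top}(M_d(k))=e_{\rm top}(M_d(k)^T)$ reduces everything to a signed \emph{count} of fixed sheaves (with $e_{\rm top}(\po)=2$ for the one-dimensional fixed loci appearing at $d=4$); no equivariant Euler classes need ever be evaluated. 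Your worry about the residue being a number is also resolved structurally, not by computation: equivariant Serre duality on the Calabi--Yau gives $\Ext^1(\ff,\ff)\simeq\Ext^2(\ff,\ff)^*$ as $T$-representations, so dual weights cancel identically. Finally, a smaller but real error: the virtual tangent space underlying Definition~\ref{bps} is $\Ext^1(\ff,\ff)-\Ext^2(\ff,\ff)$, not $\Ext^1(\ff,\ff)-\Ext^0(\ff,\ff)$ as you write; $\Ext^0$ consists of the scalar automorphisms of a stable sheaf and plays no role in the obstruction theory.
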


We prove this conjecture for $d=1,2,$ and $3$ for any $k$ and for
$d=4$ for $k\le 100$. We show that the moduli space of stable sheaves on
$X$ is smooth, and count the torus fixed sheaves using the
classification of equivariant sheaves in \cite{kool}.

This paper is organized as follows. In Section~\ref{bpsdef}, we define the equivariant genus zero BPS invariants.
In Section~\ref{equisheaf}, we review the classification of equivariant sheaves in \cite{kool}. We then classify stable equivariant sheaves on local $\po$ in Section~\ref{classification}. In Sections~\ref{bpscounting} and \ref{degree4}, we compute the BPS invariants for low degrees by counting equivariant sheaves. Finally, in Section~\ref{residue}, we give a proof that the moduli space of sheaves on $X$ is smooth and justify our computation.

\bigskip\noindent \emph{Acknowledgements.}
This work is part of my doctoral dissertation at University of
Illinois at Urbana-Champaign. I would like to thank my advisor
Sheldon Katz for invaluable discussions and many suggestions for
improvements. I would also like to thank Martijn Kool for kindly
explaining his work to me. Finally, I thank the referees for helpful and careful comments and suggestion. This work was partially supported by NSF
grants DMS-0244412 and DMS-0555678.

\section{Equivariant Local BPS Invariant}\label{bpsdef}
Let $k$ be an integer with $k\geq
-1$. Let $X={\rm Spec}({\rm Sym}(E^*))$ be the total space of a
rank 2 bundle
$$E=\op(k)\oplus\op(-2-k)$$ on $\po$. As a toric variety, $X$ contains a torus
$T'=(\cc^*)^3$ and has two $T'$-invariant affine open sets
isomorphic to $\cc^3$. The transition map is
$$(z_1,z_2,z_3)\mapsto (z_1^{-1},z_1^{-k}z_2,z_1^{2+k}z_3).$$
Here, the torus $T'$ acts by
\begin{equation}\label{eq:Tactionlocalp1}
(t_1,t_2,t_3).(z_1,z_2,z_3)=(t_1z_1,t_2z_2,t_3z_3).
\end{equation}
We will consider the action of the subtorus $$T=\{(t_1,t_2,t_3)\in
T' \colon t_1t_2t_3=1\}$$ which preserves the canonical Calabi-Yau
form \cite{mnop1}.

We define the dimension of a sheaf by the dimension of its support. A sheaf is called \emph{pure} of dimension $d$ if any nontrivial subsheaf has dimension $d$. Let $L$ be the pullback of $\oo_\po(1)$ to $X$. We construct the moduli space of $L$-stable pure dimension one sheaves $\ff$ such that the support of $\ff$ has class $d\linecl\in H_2(X)$ and $\chi(\ff)=1.$ Although $X$ is not projective, we may define Hilbert polynomial and Gieseker semistability for such sheaves.

For a sheaf $\ff$ whose support is in class $d\linecl\in H_2(X)$, we define the \emph{multiplicity} $r(\ff)$ by $r(\ff)=d$ and the Hilbert polynomial by
\begin{equation}\label{eq:hilblocalp1}
P_\ff(n)=r(\ff)n+\chi(\ff).
\end{equation}

A pure dimension one sheaf $\ff$ is called \emph{(Gieseker) semistable} with respect to $L$ if for any proper nonzero subsheaf $\gc$, we have
\[\frac{\chi(\gc)}{r(\gc)} \le \frac{\chi(\ff)}{r(\ff)}.\] Stable pure dimension one sheaf is defined with the strict inequality. For details and a construction of the moduli space of semistable
sheaves, we refer to \cite{hl}.

We consider the moduli space of $L$-(semi)stable coherent sheaves of
pure dimension 1 on $X$
$$ M_d(k)=\{ \ff \colon P_\ff=dn+1, \ff
\text{ is $L$-(semi)stable}\}.$$ 
By the condition $\chi(\ff)=1$, semistability agrees with stability.
So, there exists a perfect obstruction theory on $M_d(k)$
\cite{tho}. Unfortunately, since $M_d(k)$ is not compact, the virtual cycle for $M_d(k)$
is not well defined.

The $T$-action \eqref{eq:Tactionlocalp1} on $X$ induces a $T$-action
on the moduli space. Thus, we may define an equivariant version of invariant by means of the virtual localization.
\begin{defn}
  Let $X$ be a toric variety.
  \begin{enumerate}
\item   A sheaf $\ff$ on $X$ called \emph{$T$-fixed} if $t^*\ff\simeq \ff$.
\item Let $\sigma\colon T\times X\to X$ be $T$-action on $X$ and $p\colon T\times X\to X$ be the projection. A sheaf $\ff$ is \emph{$T$-equivariant} if we have an isomorphism $\Phi\colon \sigma^*\ff \to p^*\ff$ satisfying the cocycle condition
   \[ (\mu \times 1_X)^*\Phi= p_{23}^*\Phi\circ (1_T\times \sigma)^*\Phi,  \]
where $\mu\colon T\times T\to T$ is the multiplication map and $p_{23}\colon T\times T\times X \to T\times X$ is the projection to the second and the third factors.
\end{enumerate}
\end{defn}

We first note that the stable sheaves on $X$ are actually supported on a smaller subspace.

\begin{lem}\label{sup}
Denote by $Y$ the total space of $\oo_\po(k)$. If $\ff\in M_d(k)$,
then the scheme theoretic support of $\ff$ is in $Y$.
\end{lem}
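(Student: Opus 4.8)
The plan is to translate the scheme-theoretic support condition into the vanishing of a single sheaf map and then play the stability of $\ff$ against the stability of a twist of $\ff$. Write $\pi\colon X\to\po$ for the bundle projection. Since $Y=\mathrm{tot}(\op(k))\subset X$ is the Cartier divisor cut out by the fiberwise-linear coordinate dual to the negative summand $\op(-2-k)$ of $E$, its normal bundle is $\pi^*\op(-2-k)|_Y$; because $\mathrm{Pic}(X)\cong\mathrm{Pic}(\po)$ via $\pi^*$, this identifies $\oo_X(-Y)\cong\pi^*\op(2+k)$, i.e. the ideal sheaf satisfies $I_Y\cong\pi^*\op(2+k)$. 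Hence $\ff$ is scheme-theoretically supported on $Y$ if and only if $I_Y\cdot\ff=0$, which is equivalent to the vanishing of the adjoint multiplication map
\[ \psi\colon \ff\longrightarrow \ff\otimes\pi^*\op(-2-k). \]
So the entire lemma reduces to proving $\psi=0$.

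Next I would record the numerics of the twist. Tensoring by a line bundle pulled back from the base preserves $L$-stability (it shifts $\chi(\gc)/r(\gc)$ by the same constant for every subsheaf), so $\ff':=\ff\otimes\pi^*\op(-2-k)$ is again stable, of the same multiplicity $d$, with $\chi(\ff')=\chi(\ff)-(2+k)d=1-(2+k)d$ by the standard twisting formula $\chi(\gc\otimes\pi^*\op(m))=\chi(\gc)+m\,r(\gc)$. Thus $\ff'$ has reduced slope $\chi(\ff')/r(\ff')=1/d-(2+k)$, which is strictly smaller than $\chi(\ff)/r(\ff)=1/d$ precisely because $2+k\ge 1>0$ (recall $k\ge -1$). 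This strict drop is the engine of the argument.

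The crux is then the following. Suppose $\psi\ne0$ and set $\gc:=\mathrm{im}\,\psi$. On one hand $\gc\cong\ff/\ker\psi$ is a nonzero quotient of $\ff$, so stability of $\ff$ forces $\chi(\gc)/r(\gc)\ge 1/d$, with strict inequality when $\ker\psi\ne0$. On the other hand $\gc$ is a nonzero subsheaf of the stable sheaf $\ff'$; a subsheaf of a pure sheaf is pure, so $\gc$ has a well-defined multiplicity, and if $\gc\ne\ff'$ then stability of $\ff'$ gives $\chi(\gc)/r(\gc)<1/d-(2+k)<1/d$, contradicting the previous inequality. The only remaining possibility is $\gc=\ff'$, which forces $\ker\psi=0$ and $\ff\cong\ff'$, whence $1=\chi(\ff)=\chi(\ff')=1-(2+k)d$, again impossible. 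Therefore $\psi=0$, and $\ff$ is supported on $Y$.

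The main obstacle, and the step I would be most careful about, is the bookkeeping of the twist direction: one must verify that ``support in $Y$'' corresponds to the \emph{negative} twist $\pi^*\op(-2-k)$ (via $\oo_X(-Y)\cong\pi^*\op(2+k)$) and not its dual, since the sign of $2+k$ is exactly what produces the contradiction; geometrically this is the statement that the negative summand $\op(-2-k)$ has no sections, forcing sheaves to collapse onto $Y$. The secondary technical points—that $\mathrm{im}\,\psi$ is pure of dimension one so that stability of $\ff'$ applies, and the separate treatment of the injective case $\ker\psi=0$—are routine once the reformulation is in place.
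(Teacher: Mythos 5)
Your proposal is correct and takes essentially the same route as the paper: both identify the ideal sheaf of $Y$ as $I_Y\cong L^{2+k}$ with $2+k>0$ and then use stability to kill the multiplication map, the paper working with $\ff\otimes L^{2+k}\to\ff$ and invoking (implicitly) the standard fact that there is no nonzero map between stable sheaves of the same multiplicity when the slope strictly drops, while you work with the adjoint $\psi\colon\ff\to\ff\otimes L^{-(2+k)}$ and reprove that Hom-vanishing by hand via the image of $\psi$. The differences (twisting versus untwisting, and spelling out the slope comparison including the edge case $\mathrm{im}\,\psi=\ff\otimes L^{-(2+k)}$) are purely cosmetic, and your bookkeeping of the twist direction and of purity of the image is accurate.
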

\begin{proof}
The ideal sheaf of $Y$ is $L^{2+k}$. We have an exact sequence
$$
\xymatrix @1{%
\ff\otimes L^{2+k}  \ar[r] &\ff \ar[r] &\ff|_Y  \ar[r] & 0.}
$$
Since $2+k$ is a positive number, by the stability of $\ff$, the
first map is zero, and hence the map $\ff\to\ff|_Y$ is an
isomorphism.
\end{proof}
So, we can consider $\ff$ as a sheaf on $Y$. We will also denote by
$L$ the pullback of $\op(1)$ to $Y$. Then, $M_d(k)$ is the moduli
space of $L$-stable sheaves on $Y$.
Note that the zero section $\po$ in $Y$ is the only compact $T$-invariant curve in $Y$. Hence, if a sheaf $\ff$ is $T$-fixed, its reduced support must be $\po$. Then we have the following.

\begin{lem}
The fixed point locus of the induced $T$-action on $M_d(k)$ is compact.
\end{lem}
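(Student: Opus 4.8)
The plan is to combine two facts. The first is soft: $M_d(k)^T$ is a \emph{closed} subscheme of $M_d(k)$, being the fixed locus of a torus action on a separated scheme of finite type. The second is a \emph{boundedness} statement: every $T$-fixed sheaf is scheme-theoretically supported on one and the same projective thickening of the zero section. Granting both, $M_d(k)^T$ is a closed subset sitting inside a fixed compact subset of $M_d(k)$, and is therefore compact.

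First I would prove the boundedness. By the remark preceding the statement, any $T$-fixed $\ff\in M_d(k)$ has reduced support equal to the zero section $\po\subset Y$; let $I\subset\oo_Y$ be its ideal sheaf. Since $Y$ is a smooth surface and $\po\subset Y$ a smooth divisor, the local ring $\oo_{Y,\eta}$ at the generic point $\eta$ of $\po$ is a discrete valuation ring whose uniformizer is a local equation for $\po$. The stalk $\ff_\eta$ is then a finite length torsion module over this DVR, of length equal to the multiplicity $r(\ff)=d$, so by the structure theorem it is annihilated by the $d$-th power of the uniformizer; that is, $(I^d\ff)_\eta=0$. Hence $I^d\ff\subseteq\ff$ vanishes at $\eta$ and is therefore supported in dimension $0$, and purity of $\ff$ forces $I^d\ff=0$. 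Every $T$-fixed $\ff$ thus lives on the fixed closed subscheme $\po_{d-1}:=V(I^d)\subset Y$, which does not depend on $\ff$.

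Next I would record that $\po_{d-1}$ is projective: it is proper over $\cc$ (its underlying space is the compact $\po$ and $\oo_{\po_{d-1}}$ is $\oo_\po$-coherent), and $L|_{\po_{d-1}}$ is ample because ampleness can be tested on the reduction $\po$, where $\op(1)$ is ample. Consequently the moduli space $\overline M$ of $L$-stable sheaves on $\po_{d-1}$ with Hilbert polynomial $dn+1$ exists and is projective \cite{hl}. As $\iota\colon\po_{d-1}\hookrightarrow Y$ is a closed immersion, $\iota_*$ is exact and fully faithful, identifies subsheaves, and preserves both the Hilbert polynomial and Gieseker stability; pushforward therefore defines a morphism $\overline M\to M_d(k)$ whose image $Z$ is compact and contains $M_d(k)^T$.

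To conclude, $M_d(k)^T$ is closed in $M_d(k)$, so $M_d(k)^T=M_d(k)^T\cap Z$ is closed in the compact set $Z$ and hence compact. I expect the boundedness step to be the only real obstacle: its content is that purity rigidly caps the order of the infinitesimal thickening of the support by the multiplicity $d$, confining all fixed sheaves to a single compact scheme. Once that is in hand, closedness of the fixed locus together with the projectivity of moduli of sheaves on a projective scheme finishes the argument.
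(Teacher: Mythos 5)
Your proof is correct, but it takes a genuinely different route from the paper's. The paper's proof (relying on Section~\ref{residue}) embeds $Y$ into the Hirzebruch surface $\hk$ and identifies $M_d(k)$, via pushforward, with an open subscheme of the projective moduli space $M_\hk(d)$; the lemma then follows from the observation that the $T$-fixed sheaves of $M_\hk(d)$ supported on $\po$ are exactly the fixed sheaves of $M_d(k)$, so $M_d(k)^T$ sits as a union of connected components inside the compact fixed locus $M_\hk(d)^T$ --- implicitly using that the support cycle is locally constant on the fixed locus, since a fixed sheaf of class $dD_4$ on $\hk$ could a priori be supported on $D_2$ together with fibers. You instead compactify nothing: you bound the scheme-theoretic support uniformly, showing via the DVR structure at the generic point (where the length of $\ff$ is indeed the multiplicity $d$) together with purity that every fixed sheaf satisfies $I^d\ff=0$, hence lives on the fixed projective thickening $V(I^d)$, and then you invoke projectivity of the moduli space of stable sheaves on that non-reduced projective curve plus closedness of torus-fixed loci. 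Each step checks out: $V(I^d)$ is finite over $\po$, so $L$ restricts to an ample bundle on it; pushforward along the closed immersion preserves purity, Hilbert polynomial, and stability (subsheaves of $\iota_*\gc$ are automatically $\oo_{V(I^d)}$-modules); and semistability coincides with stability since $\chi=1$, so $\overline{M}$ is projective and its image $Z\subset M_d(k)$ is compact. As for what each approach buys: the paper's argument is essentially free, since the $\hk$ embedding is needed in Section~\ref{residue} anyway for smoothness and the dimension count, whereas yours is self-contained, makes the boundedness mechanism explicit (purity caps the thickening of the support by the multiplicity), and sidesteps the verification, left unstated in the paper, that the locus of fixed sheaves supported on $\po$ is open and closed in $M_\hk(d)^T$.
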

\begin{proof}
We will see in Section \ref{residue} that we can embed $M_d(k)$ into a compact moduli space via an embedding of $Y$ into the Hirzebruch surface $\hk$. The torus fixed locus supported on $\po$ is the same, and hence it is compact.
\end{proof}

Therefore, we can define an invariant by the residue integral on the fixed locus using the
virtual localization formula \cite{gploc}.
\begin{defn}\label{bps}
Let $M_i^T$ be connected component of $T$-fixed locus $M_d(k)^T$.
Let $N_i^{\rm vir}$ be the virtual normal bundle to $M_i^T$ obtained
from the moving part of the virtual tangent space. We define the
\emph{equivariant genus zero equivariant BPS invariant} by
\[n_d(k)=\sum_i\int_{[M_i^T]^{\rm
 vir}}\frac{1}{e(N_i^{\rm vir})}.\]
Here, $e(-)$ is the equivariant Euler class.
\end{defn}

Note that since $M_d(k)$ has a symmetric obstruction theory and $T$ preserves the Calabi-Yau form, all dual weights in the localization formula cancel each other and the resulting invariant $n_d(k)$ is a number.

In Section \ref{residue}, we show the moduli space $M_d(k)$ is smooth of dimension $kd^2+1$. So, the BPS invariant in Definition \ref{bps} is given by the signed topological Euler characteristic
\[n_d(k)=(-1)^{kd^2+1} e_{\rm top} (M_d(k)).\]

The following is standard. See for example \cite{CG}.
\begin{thm}\label{thm:localization}
Let $M$ be a quasi-projective $\cc$-scheme of finite type. Let $T$ be an algebraic torus acting regularly on $M$. Then $e_{\rm top}(M) = e_{\rm top}(M^T)$.
\end{thm}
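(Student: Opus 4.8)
The plan is to deduce the statement from two structural properties of the Euler characteristic and a reduction to a one-dimensional torus. The first point I would record is that for a complex algebraic variety the topological Euler characteristic computed from ordinary singular cohomology agrees with the one computed from compactly supported cohomology, $e_{\rm top}=e_c$. I would then work throughout with $e_c$, which has the two features that drive the proof: it is \emph{additive} over decompositions into locally closed pieces, so that $e_c(M)=e_c(Z)+e_c(M\setminus Z)$ for $Z$ closed, and it is \emph{multiplicative} for fiber bundles that are Zariski-locally trivial, so that $e_c$ of the total space is the product of the $e_c$ of the base and of the fiber. The whole argument ultimately rests on the vanishing $e_c(\cc^*)=0$.

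Next I would reduce the rank of $T$ to one. For a torus acting on a quasi-projective variety only finitely many proper subtori occur as identity components of point stabilizers, so I can choose a one-parameter subgroup $\lambda\colon\cc^*\hookrightarrow T$ whose image is contained in none of them. For such a $\lambda$, any point fixed by $\lambda(\cc^*)$ has its stabilizer containing $\lambda(\cc^*)$, hence its identity component is forced to be all of $T$; that is, $M^\lambda=M^T$. It therefore suffices to prove $e_{\rm top}(M)=e_{\rm top}(M^{\cc^*})$ for a single $\cc^*$ acting on $M$.

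For the $\cc^*$-case I would write $M=M^{\cc^*}\sqcup U$ with $U=M\setminus M^{\cc^*}$ open and $\cc^*$-invariant, so that by additivity it is enough to show $e_c(U)=0$. Every point of $U$ has a finite stabilizer, necessarily of the form $\mu_n\subset\cc^*$, and stratifying $U$ by the order $n$ of the stabilizer produces finitely many locally closed, $\cc^*$-invariant strata $U_n$ on each of which the action factors through a free action of $\cc^*/\mu_n\cong\cc^*$. The orbit map then exhibits $U_n$ as a principal $\cc^*$-bundle over its geometric quotient, and such bundles are Zariski-locally trivial, so multiplicativity gives $e_c(U_n)=e_c(\cc^*)\cdot e_c(U_n/\cc^*)=0$. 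Summing over the finitely many strata yields $e_c(U)=0$, and hence $e_{\rm top}(M)=e_{\rm top}(M^{\cc^*})=e_{\rm top}(M^T)$.

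The main obstacle I anticipate is the geometric input required to make the last step fully rigorous: producing the geometric quotient of each free stratum $U_n$ and verifying that the orbit map is genuinely a locally trivial $\cc^*$-bundle, together with the multiplicativity of $e_c$ for such bundles. By contrast, the existence of the generic one-parameter subgroup and the identity $e_{\rm top}=e_c$ are comparatively standard; it is nonetheless worth isolating them, since the entire computation transports the single vanishing $e_c(\cc^*)=0$ through the orbit fibrations.
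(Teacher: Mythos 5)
The paper offers no proof of this statement at all --- it is labeled ``standard'' with a citation to \cite{CG} --- so the benchmark is whether your outline closes up rigorously on its own. Most of it does: the identity $e_{\rm top}=e_c$ for complex varieties, additivity of $e_c$, and the reduction to a one-parameter subgroup $\lambda$ with $M^\lambda=M^T$ are all sound (for the last, and for the finiteness of the set of stabilizer subgroups that it relies on, you should invoke Sumihiro's theorem to embed the quasi-projective $M$ equivariantly into some $\pp^N$ with linearized action, whereupon stabilizers are read off from the finitely many possible sets of weights supporting a point). The genuine gap is the step you yourself flagged: the claim that each free stratum $U_n$ is a principal $\cc^*$-bundle over ``its geometric quotient.'' That quotient need not exist in the category of varieties. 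Already for the free action $t\cdot(x,y)=(tx,t^{-1}y)$ on $\cc^2\setminus\{0\}$ --- a single stratum with trivial stabilizers --- the two punctured axes are distinct orbits that cannot be separated from each other in the orbit space, which is the affine line with a doubled origin; so there is no geometric quotient variety and a fortiori no global principal-bundle structure, and your multiplicativity step has nothing to apply to.

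The standard repair replaces the global stratification with Rosenlicht's theorem plus Noetherian induction. Set $U=M\setminus M^{\cc^*}$; on each irreducible component the stabilizer is generically a fixed $\mu_n$ (by the finiteness above), so $\cc^*/\mu_n\cong\cc^*$ acts freely on a dense open invariant subset, and Rosenlicht provides a dense open invariant $V$ admitting a geometric quotient. After shrinking $V$ (e.g.\ to the smooth locus, so that the bijection $\cc^*\times V\to V\times_{V/\cc^*}V$ coming from freeness is an isomorphism), the map $V\to V/\cc^*$ is a principal $\cc^*$-bundle, and it is Zariski-locally trivial because $\cc^*={\rm GL}_1$ is a special group (Hilbert 90) --- this is exactly the local triviality you were worried about. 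Multiplicativity then gives $e_c(V)=e_c(\cc^*)\,e_c(V/\cc^*)=0$, and the closed invariant complement $U\setminus V$ is lower-dimensional and still fixed-point-free, so induction yields $e_c(U)=0$ and hence the theorem. With this substitution your argument is complete; alternatively you could keep your stratification by stabilizer order but work with non-separated quotients or algebraic spaces, at the cost of re-justifying additivity and multiplicativity of $e_c$ in that larger category.
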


\begin{prop}[\cite{kool, kaneyama}]\label{prop:tfixedequaltequi}
A stable sheaf on a toric variety supported on a compact subscheme is $T$-fixed if and only if it is
$T$-equivariant.
\end{prop}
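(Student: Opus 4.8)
The forward direction is immediate: if $\ff$ carries a $T$-equivariant structure $\Phi\colon\sigma^*\ff\to p^*\ff$, then restricting $\Phi$ to the slice $\{t\}\times X$ produces an isomorphism $t^*\ff\xrightarrow{\sim}\ff$ for every closed point $t\in T$, so $\ff$ is $T$-fixed. The content is the reverse implication, and the whole argument will hinge on the fact that a stable sheaf with compact support is \emph{simple}, i.e. $\mathrm{End}(\ff)=\cc$. Compactness of the support guarantees the endomorphism space is finite-dimensional, so stability forces it to be the scalars; it also makes the projection from the support proper, which is what lets me apply cohomology and base change below. The plan is to package the pointwise isomorphisms into an honest equivariant structure in two stages: first produce an algebraic family of isomorphisms over all of $T$, then correct it so that the cocycle condition holds.

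For the first stage I would consider the coherent sheaf $\mathcal{H}om(\sigma^*\ff,p^*\ff)$ on $T\times X$ and push it forward along the projection $\pi\colon T\times X\to T$. By the $T$-fixed hypothesis and simplicity, the fibre over each $t\in T$ is $\Hom(t^*\ff,\ff)\cong\Hom(\ff,\ff)=\cc$, one-dimensional and of constant rank; cohomology and base change then shows $\pi_*\mathcal{H}om(\sigma^*\ff,p^*\ff)$ is a line bundle $\mathcal{L}$ on $T$. Since $T$ is a torus we have $\mathrm{Pic}(T)=0$, so $\mathcal{L}$ is trivial and admits a nowhere-vanishing global section, which is exactly an isomorphism $\Phi\colon\sigma^*\ff\to p^*\ff$ restricting on every slice to a nonzero, hence (as a nonzero map between stable sheaves with the same reduced Hilbert polynomial) invertible, homomorphism. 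Equivalently, the pairs $(t,\phi)$ with $\phi\colon t^*\ff\xrightarrow{\sim}\ff$ form a $\cc^*$-bundle $\tilde T$ over $T$, trivial as a variety, and composition of isomorphisms (using associativity of the action) makes $\tilde T$ an algebraic group sitting in a central extension $1\to\cc^*\to\tilde T\to T\to 1$, with $\cc^*=\mathrm{Aut}(\ff)$.

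The heart of the matter, and the step I expect to be the main obstacle, is that such a global $\Phi$ need not satisfy the cocycle condition on the nose; equivalently, the extension $\tilde T$ must be shown to split. The discrepancy between the two sides of the cocycle identity is measured by a regular function $c\in\oo(T\times T)^*$, and the obstruction to splitting is governed by the failure of $\tilde T$ to be abelian, namely the commutator pairing $T\times T\to\cc^*$, which is bimultiplicative and alternating. Here I would invoke the purely group-theoretic fact that \emph{every} algebraic bimultiplicative map $T\times T\to\cc^*$ on a torus is trivial: fixing one argument gives a character of $T$ depending multiplicatively and algebraically on the other, hence a constant element of the discrete character lattice, which must then be zero. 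Thus $\tilde T$ is a connected commutative extension of $T$ by $\cc^*$, in fact a torus, and such extensions split because $\Ext^1(T,\cc^*)=0$; at the level of character lattices the sequence $0\to M_T\to M_{\tilde T}\to\zz\to 0$ splits since $\zz$ is free. A homomorphic section $T\to\tilde T$ of the resulting split extension is precisely a $T$-equivariant structure on $\ff$ satisfying the cocycle condition, completing the reverse implication.
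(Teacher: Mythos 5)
The paper offers no proof of its own here---the proposition is quoted directly from \cite{kool, kaneyama}---and your argument is essentially the standard one found in those references: stability plus compact support gives simplicity, so the fixed-point isomorphisms are unique up to scalar and assemble (via triviality of line bundles on $T$) into a global $\Phi$ whose cocycle failure defines a central extension $1\to\cc^*\to\widetilde{T}\to T\to 1$, which splits because bimultiplicative pairings $T\times T\to\cc^*$ are trivial and extensions of tori by $\cc^*$ split on character lattices. Your proposal is correct and takes the same route as the cited proof.
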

\comments{
In these assertions, we need a projective base scheme. As we will
see in the next section, we can consider sheaf on $Y$ supported on
$\po$ as a sheaf on the Hirzebruch surface $\hk$ on which we can
find a line bundle that gives our stability.}

Hence, one can compute the equivariant BPS invariant by counting equivariant sheaves. In the following section, we review the classification of equivariant sheaves in \cite{kool}.

\section{Equivariant Sheaves}\label{equisheaf}

As a toric variety, $Y$ contains a two-dimensional torus $(\cc^*)^2$
which is isomorphic to $T$ by the isomorphism
\[(\cc^*)^2\ni (t_1,t_2) \mapsto (t_1,t_2,t_1^{-1}t_2^{-1})\in T.\]
The action of this torus is the same as the restriction of
$T$-action on $X$ to $Y$. So, by a slight abuse of notation, we also
denote this embedded torus by $T$ and consider $T$-equivariant
sheaves on $Y$. \comments{It is well known that a stable sheaf on $Y$
supported on a compact subscheme is $T$-fixed if and only if it is
$T$-equivariant. See for example \cite{kool, kaneyama}.
In these assertions, we need a projective base scheme. As we will
see in the next section, we can consider sheaf on $Y$ supported on
$\po$ as a sheaf on the Hirzebruch surface $\hk$ on which we can
find a line bundle that gives our stability.}

In this section, we describe pure equivariant sheaves $\ff$ on $Y$
following \cite{kool}. Let $M$ be the group of characters of $T$ and
$N$ be the group of one parameter subgroups. Then, the fan
associated to $Y$ (which lies in $N\otimes \mathbb{R}$) is
\[\{\sigma_1={\rm Cone}((0,1),(1,0)),\sigma_2={\rm
 Cone}((0,1),(-1,-k))\}\] where ${\rm Cone}(v_1,v_2)$ denote the
convex cone generated by vectors $v_1$ and $v_2$. The $T$-invariant
subvariety associated to the face $(0,1)$ is the zero section of
$\op(k)$ (Figure 1).
\begin{figure}\begin{center}
\begin{pspicture}(4,4)%
\rput(2,2){\pspolygon[fillstyle=solid,
fillcolor=lightgray,linestyle=none](0,0)(-1,-2)(-2,-2)(-2,2)(2,2)(2,0)
\psline(0,0)(2,0) \psline(0,2)(0,-2) \psline(0,0)(-1,-2)
\rput(1,1){$\sigma_1$}\rput(-1,0){$\sigma_2$} }
\end{pspicture}\end{center}
\caption{Toric fan of $Y$}
\end{figure}
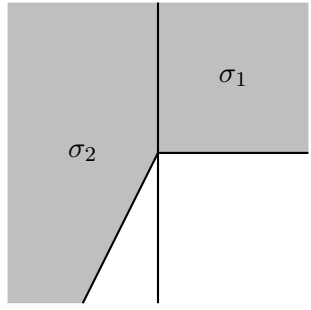

We have two $T$-invariant affine open sets $U_{\sigma_i}={\rm
Spec}(k[S_{\sigma_i}])$, $i=1,2$, where $S_{\sigma_i}$ is the
semigroup defined by $\sigma_i$ \[S_{\sigma_i}=\sigma_i^\vee\cap
M.\] For a notational convenience, we let $M^i$ be a copy of $M$
whose elements are expressed with respect to the semigroup generator
of $S_{\sigma_i}$, that is,
\[M^1=\{m_1(1,0)+m_2(0,1)\} \text{ and }
M^2=\{m_1(-1,0)+m_2(-k,1)\}.\] For $m,m'\in M^i$, we say $m'\geq m$
if every component of $m'-m$ is nonnegative. Note that this means $m'-m$ is an element of the
semigroup $S_{\sigma_i}$.

A quasi-coherent sheaf $\ff$ on $U_{\sigma_i}$ corresponds to a $\Gamma(U_{\sigma_i},\oo_{U_{\sigma_i}})$-module $\Gamma(U_{\sigma_i},\ff)$ and under this equivalence a $T$-equivariant structure on $\ff$ corresponds to a $T$-action on $\Gamma(U_{\sigma_i},\ff)$. In turn, this $T$-action gives a decomposition into weight spaces
\[\Gamma(U_{\sigma_i},\ff)=\bigoplus_{m\in M^i}\Gamma(U_{\sigma_i},\ff)_{m}.\]
Denote the weight space $\Gamma(U_{\sigma_i},\ff)_{m}$ by $F^i(m)$,
$m=(m_1,m_2) \in M^i$. Since $\ff$ is $\oo_Y$-module, each
$\Gamma(U_{\sigma_i},\ff)$ is $M^i$-graded $\cc[S_{\sigma_i}]$-module. We
can reformulate the $\cc[S_{\sigma_i}]$-module structure by the following
data: $k$-linear maps $\chi^i_{m,m'}\colon F^i(m)\to F^i(m')$ for
all $m, m'\in M^i$ with $m'\geq m$ such that
\begin{equation}\label{compatibility}
\chi^i_{m,m}=1\text{ and }\chi^i_{m,m''}=\chi^i_{m',m''}\circ
\chi^i_{m,m'}.\end{equation}

Moreover, in our case, where the reduced support of $\ff$ is $\po$,
we have the following \cite[Chapter 2]{kool}.
\begin{prop}\label{class}
Let $\ff$ be a pure equivariant sheaf on $Y$ with support $\po$.
Then,
\begin{enumerate}
  \item There are integers $A_1^1$, $A_1^2$, and $A\leq B$ such that $F^i(m_1,m_2)=0$ unless
  $A_1^i\leq m_1$ and $A\leq m_2\leq B$.
  \item For each $A\leq m_2\leq B$, the maps $\chi^i_{(m_1,m_2),(m_1+1,m_2)}$ are all injective and the direct limit
  $\displaystyle{\varinjlim_{m_1} F^i(m_1,m_2)}$ is a finite-dimensional vector space denoted by $F^i(\infty,m_2)$.
  \item For each $A\leq m_2\leq B$,
  \[F^1(\infty,m_2)\simeq F^2(\infty,m_2)\]
  and under this identification, \[\chi^1_{(\infty,m_2),(\infty,m_2+1)}=\chi^2_{(\infty,m_2),(\infty,m_2+1)},\] where
  $\chi^i_{(\infty,m_2),(\infty,m_2+1)}=\displaystyle{\varinjlim_{m_1}
  \chi^i_{(m_1,m_2),(m_1,m_2+1)}}$.
\end{enumerate}
Moreover, let $\mathcal{C}$ be the category whose objects are
$\{F^i(m), \chi^i_{m,m'}\}$ satisfying above conditions and
morphisms \[\phi\colon\{F^i(m), \chi^i_{m,m'}\}\to \{G^i(m),
\lambda^i_{m,m'}\} \] are collections of linear maps
$\phi^i(m)\colon F^i(m)\to G^i(m)$ satisfying \[ \phi^i(m')\circ
\chi^i_{m,m'}=\lambda^i_{m,m'}\circ\phi^i(m)\text{ and
}\phi^1(\infty,m_2)=\phi^2(\infty,m_2).\] Then, this correspondence
is an equivalence between the category of pure equivariant sheaves
and equivariant morphisms with the category $\mathcal{C}$.
\end{prop}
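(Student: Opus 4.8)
The plan is to specialize the general dictionary between $T$-equivariant quasi-coherent sheaves on a smooth toric variety and compatible families of weight-space diagrams (as developed in \cite[Chapter 2]{kool}, following Klyachko and Perling) to the surface $Y$, restricted to the subcategory of pure sheaves supported on the zero section $\po$. Under this dictionary an equivariant sheaf $\ff$ is recorded by the $M^i$-graded $\cc[S_{\sigma_i}]$-modules $\Gamma(U_{\sigma_i},\ff)=\bigoplus_m F^i(m)$ together with the structure maps $\chi^i_{m,m'}$ and the gluing on the overlap $U_{\sigma_1}\cap U_{\sigma_2}$. My task is therefore to show that coherence, the support condition, and purity translate exactly into conditions (1)--(3), and that these conditions cut out precisely the essential image of the functor.

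First I would establish the boundedness in (1). On each chart $U_{\sigma_i}\cong\cc^2$ the module $\Gamma(U_{\sigma_i},\ff)$ is finitely generated, so its weight spaces vanish outside a finite union of shifted copies of $S_{\sigma_i}$; in particular $F^i(m_1,m_2)=0$ unless $m_1\geq A_1^i$ and $m_2\geq A$ for suitable integers. For the upper bound $m_2\leq B$ I would use that the reduced support of $\ff$ is $\po=\{z_2=0\}$, so the fibre coordinate $z_2$ lies in $\sqrt{\mathrm{Ann}(\ff)}$ and hence $z_2^N\ff=0$ for $N\gg0$; since multiplication by $z_2$ realizes the grading shift $(m_1,m_2)\mapsto(m_1,m_2+1)$, this forces the vanishing for $m_2>B$. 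The geometric crux is the injectivity in (2), which is exactly where purity enters: $\chi^i_{(m_1,m_2),(m_1+1,m_2)}$ is multiplication by the base coordinate ($z_1$ on $U_{\sigma_1}$, resp. $z_1^{-1}$ on $U_{\sigma_2}$), and if a nonzero $s\in F^i(m_1,m_2)$ were annihilated by it, the cyclic equivariant subsheaf $\gc=\oo_Y\cdot s\subseteq\ff$ would be killed both by the base coordinate and, being a subsheaf of $\ff$, by a power of $z_2$; thus $\gc$ would be supported on the single fixed point $\{z_1=z_2=0\}$, hence zero-dimensional and nonzero, contradicting purity.

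Injectivity makes $\dim F^i(m_1,m_2)$ nondecreasing in $m_1$, and I would bound it by passing to the localization $\ff[z_1^{-1}]$, i.e.\ the restriction of $\ff$ to the overlap $U_{\sigma_1}\cap U_{\sigma_2}=\mathrm{Spec}\,\cc[z_1^{\pm1},z_2]$ attached to the shared ray $\tau=\mathrm{Cone}((0,1))$. Because $z_2$ is nilpotent on $\ff$, this restriction is a finitely generated module over $\cc[z_1^{\pm1}]$ whose $m_2$-graded pieces are exactly the $F^i(\infty,m_2)=\varinjlim_{m_1}F^i(m_1,m_2)$, forcing them to be finite-dimensional and supported in the common range $A\leq m_2\leq B$; this yields (2) and, together with the gluing, the common $A,B$ in (1). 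For (3) I would observe that $\ff[z_1^{-1}]$ is computed identically from either chart, its weight decomposition being $\bigoplus_{m_2}F^i(\infty,m_2)$ with limiting maps given by multiplication by $z_2$, so the two descriptions must agree, giving $F^1(\infty,m_2)\simeq F^2(\infty,m_2)$ and the equality of the limiting $\chi$-maps.

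Finally, to obtain the equivalence with $\mathcal{C}$ I would verify full faithfulness (an equivariant morphism is the same datum as a collection of weight-space maps intertwining the $\chi$'s and satisfying the overlap compatibility $\phi^1(\infty,m_2)=\phi^2(\infty,m_2)$) and essential surjectivity (from data in $\mathcal{C}$ build the graded modules on each chart, glue along $F^1(\infty,-)=F^2(\infty,-)$ over $U_\tau$, and check the result is coherent, supported on $\po$, and pure). I expect this reconstruction step to be the main obstacle. The delicate point is the purity of the assembled sheaf: one must argue that the canonical zero-dimensional torsion subsheaf of $\ff$ is $T$-invariant, hence equivariant and supported at the two fixed points, so that its presence is detected precisely by kernels of the base-direction maps $\chi^i_{(m_1,m_2),(m_1+1,m_2)}$; the injectivity hypothesis in (2) then rules it out and shows that injectivity is genuinely equivalent to purity rather than to mere torsion-freeness along the base.
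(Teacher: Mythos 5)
Your proposal is correct, and it follows in substance the same route as the paper, which gives no independent argument for this proposition but states it as a special case of the general classification of pure equivariant sheaves on toric varieties in \cite[Chapter 2]{kool} (the Klyachko--Perling weight-space dictionary) --- exactly the machinery you specialize to $Y$. Your chart-by-chart analysis (finite generation for the lower bounds, nilpotence of the fiber coordinate $z_2$ for the bound $m_2\le B$, the equivalence of purity with injectivity of the $z_1$-multiplication maps via zero-dimensional cyclic subsheaves, and localization to the overlap $U_{\sigma_1}\cap U_{\sigma_2}$ to get finite-dimensionality of the limits and the gluing condition (3)), as well as the reconstruction step you flag as delicate, is precisely how the cited general proof proceeds, so there is nothing genuinely different to compare.
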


An object in the categry $\mathcal{C}$ is called
\emph{$\Delta$-family} \cite{per}. Proposition~\ref{class} is a
special case of more general statement about pure equivariant
sheaves on a toric variety \cite{kool}. We state it for $Y$ to avoid
heavy notation.

Assume $A$ in condition $(1)$ is maximally chosen. Let $\oo_Y(\chi)$
be the structure sheaf of $Y$ endowed with the equivariant structure
induced by a character $\chi\in M$. Then $\ff\otimes \oo_Y(\chi)$ is
isomorphic to the sheaf $\ff$ with equivariant structure shifted by
$\chi$. Therefore, up to isomorphism we may take $A=0$. Since we are only interested in the equivariant sheaves up to isomorphism, we will always assume $A=0$.

The following example shows how we can illustrate an equivariant sheaf as a diagram.
\comments{We will use the following convention.
\begin{convention}\label{conv:box}
A box in $M^1$ corresponds to the lattice point of its lower left corner whereas a box in $M^2$ corresponds to the lattice point of its lower right corner. We use this convention because the horizontal coordinate axes are in different directions.
\end{convention}
}
\begin{ex}
Let $C_n$ be the $n$-th order thickening of $\po$ in the direction
of $\oo_\po(k)$. More precisely, $C_n$ is ${\rm Spec}({\rm
Sym}(\oo_\po(-k))/\mathfrak{I})$ where $\mathfrak{I}$ is the ideal
generated by $S^n(\oo_\po(-k))$. Then for the sheaf $\oo_{C_n}$, we
have
\[\Gamma(U_{\sigma_i},\oo_{C_n})_{(m_1,m_2)}=\begin{cases} \cc &\text{ if } 0\leq m_2\leq n-1 \text{ and } m_1\geq
0 \\ 0 &\text{ else}\end{cases}\]

 We can illustrate this by putting a box at the position
$(m_1,m_2)$ if the corresponding weight space is nonzero. By condition (2) and (3) in Proposition \ref{class}, for each open chart, the asymptotic weight vector
spaces are stabilized and identified with each other. So, we place
the asymptotic vector spaces in the middle. For example, the sheaf
$\oo_{C_3}$ can be depicted as in Figure~\ref{oc3}.

\begin{figure}\begin{center}
\begin{pspicture}(6,3)
{\pspolygon[fillstyle=solid,
fillcolor=lightgray,linestyle=none](0.5,0.5)(5.5,.5)(5.5,2)(.5,2)
\psline{->}(0,.5)(2.5,.5)\psline{->}(6,.5)(3.5,.5)
\psline{->}(0.5,0)(.5,3) \psline{->}(5.5,0)(5.5,3)
\rput(2.3,2.7){$U_{\sigma_1}$}\rput(3.8,2.7){$U_{\sigma_2}$} }
\rput(2.5,0.2){$^{(1,0)}$}\rput(0.1,2.7){$^{(0,1)}$}
\rput(3.5,0.2){$^{(-1,0)}$}\rput(6,2.7){$^{(-k,1)}$}
\psline(.5,.5)(2.5,.5)\psline[linestyle=dotted](2.5,.5)(3.5,.5)\psline(3.5,.5)(5.5,.5)
\rput(0,1){
\psline(.5,.5)(2.5,.5)\psline[linestyle=dotted](2.5,.5)(3.5,.5)\psline(3.5,.5)(5.5,.5)}
\rput(0,.5){
\psline(.5,.5)(2.5,.5)\psline[linestyle=dotted](2.5,.5)(3.5,.5)\psline(3.5,.5)(5.5,.5)}
\rput(0,1.5){
\psline(.5,.5)(2.5,.5)\psline[linestyle=dotted](2.5,.5)(3.5,.5)\psline(3.5,.5)(5.5,.5)}
\psline(1,.5)(1,2)\psline(1.5,.5)(1.5,2) \psline(2,.5)(2,2)
\psline(5,.5)(5,2)\psline(4.5,.5)(4.5,2) \psline(4,.5)(4,2)
\end{pspicture}\end{center}
\caption{The sheaf $\oo_{C_3}$}\label{oc3}
\end{figure}

In this particular example, all weight spaces are one dimensional.
We will see other examples in which weight spaces have more than one
dimension. 
\end{ex}

From this description, it is clear that the equivariant version of
Grothendieck's theorem holds.
\begin{thm}\label{decomp}
Let $\ee$ be a equivariant vector bundle of rank $r$ on $\po$. Then
there are integers $a_1,\cdots, a_r$ uniquely determined up to order
such that we have an equivariant isomorphism $\ee\simeq
\oo(a_1)\oplus \cdots \oplus \oo(a_r)$.
\end{thm}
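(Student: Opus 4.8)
The plan is to read the splitting straight off the $\Delta$-family description, applied to the toric variety $\po$ rather than to $Y$. First I would record that on each of the two $T$-invariant affine charts of $\po$ the bundle $\ee$ restricts to an equivariant \emph{free} module over $\cc[t]$ (resp.\ $\cc[t^{-1}]$), and that every finitely generated graded free module over such a ring is a direct sum of rank-one shifts. Passing to the stabilized weight spaces as in Proposition~\ref{class}(2)--(3), this local freeness means that the jump data of chart $i$ amounts to an increasing filtration $\{E^i(m)\}_{m}$ of the single stabilized vector space $E:=F^1(\infty)=F^2(\infty)$, with $\dim E=r$. In other words, an equivariant vector bundle of rank $r$ on $\po$ is encoded by one $r$-dimensional space $E$ together with two filtrations $\{E^1(m)\}$ and $\{E^2(m)\}$, whose jump indices record the characters of the two local splittings.

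The key step is then purely linear-algebraic: \emph{given two filtrations of a finite-dimensional vector space there is a basis adapted to both}. I would prove this by choosing, for each pair $(a,b)$, vectors completing a basis of $E^1(a-1)\cap E^2(b)+E^1(a)\cap E^2(b-1)$ to a basis of $E^1(a)\cap E^2(b)$; the union of all these vectors over all $(a,b)$ is a basis of $E$ that is simultaneously adapted to $\{E^1(m)\}$ and $\{E^2(m)\}$. Each such basis vector $e_i$ is homogeneous for the torus in \emph{both} charts, so it generates an equivariant sub-line-bundle of $\ee$; its degree $a_i$ is the difference of the two jump indices of $e_i$ in the two filtrations. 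Since the chosen vectors biject with the summands of both local decompositions, the resulting equivariant morphism $\bigoplus_i \oo(a_i)\to\ee$ is an isomorphism on each chart, hence an equivariant isomorphism on $\po$, giving the desired splitting.

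For uniqueness I would simply invoke the classical (non-equivariant) Grothendieck theorem: forgetting the $T$-action leaves an ordinary splitting $\ee\simeq\bigoplus_i\oo(a_i)$, and the multiset $\{a_i\}$ is an invariant of the underlying bundle (for instance it is determined by the dimensions of $\Hom(\oo(n),\ee)$ as $n$ varies), hence is determined up to order. Note that the theorem asks only for the integers $a_i$, not for the equivariant characters attached to each summand, so no further bookkeeping of weights is needed here.

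I expect the only genuine content to be the common-adapted-basis lemma; the translation between the combinatorial $\Delta$-family data and honest sub-line-bundles is routine once the equivalence of Proposition~\ref{class} is in hand. The one pitfall to watch is that the two horizontal coordinate directions of the two charts point oppositely (compare the generators of $M^1$ and $M^2$ in the description of the semigroups), so the step that converts a pair of jump indices into the single integer $a_i$ must incorporate the transition map correctly. This, however, affects only the identification of the degrees and not the existence of the equivariant splitting.
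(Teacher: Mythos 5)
Your proposal is correct and takes essentially the same route as the paper: the paper's proof likewise reduces, via Proposition~\ref{class} with $A=B=0$, to two filtrations of the single asymptotic space $E^1(\infty,0)\simeq E^2(\infty,0)$, picks a basis simultaneously adapted to both filtrations, and splits the $\Delta$-family into the rank-one subfamilies generated by the basis vectors. The only difference is one of detail: you spell out the proof of the common-adapted-basis lemma (via complements to $E^1(a-1)\cap E^2(b)+E^1(a)\cap E^2(b-1)$ inside $E^1(a)\cap E^2(b)$) and the uniqueness of the $a_i$ via the non-equivariant Grothendieck theorem, both of which the paper asserts without proof.
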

\begin{proof}
This theorem is due to Klyachko \cite{klyachko}. Since the scheme
theoretic support is $\po$, we must have $A=0$ and $B=0$ in the
condition (1) of Proposition~\ref{class}. Let
$(\{E^1(m,0)\},\{E^2(m,0)\})$ be the corresponding family. Then, we
can pick a basis $\{v_j\}$ of the asymptotic weight space
$E^1(\infty,0)\simeq E^2(\infty,0)$ in such a way that for any $m$
and $i=1,2$, a subset of $\{v_j\}$ forms a basis of $E^i(m,0)$.
Therefore, by taking subfamilies generated by each $v_j$,
$(\{E^1(m,0)\},\{E^2(m,0)\})$ decomposes into families with one-dimensional weight spaces. Hence, $\ee$ decomposes equivariantly
into equivariant line bundles.
\end{proof}

Let $\pi\colon Y \to \po$ be the natural projection and $\ff$ be a pure sheaf on $Y$. Then, $\pi_*\ff$ is a locally free sheaf on $\po$, so it has a decomposition \[\pi_*\ff \simeq \displaystyle{\bigoplus_{i=1}^d\oo_\po(a_{i})}.\]
$\pi_*$ induces an equivalence between the category of
$\oo_Y$-modules and the category of $\pi_*\oo_Y$-modules on
$\po$\cite[Ex.II.5.17]{har}. Since $\pi_*\oo_Y\simeq {\rm Sym}(\oo_\po(-k))$,
$\pi_*\oo_Y$-modules structure on $\pi_*\ff$ is given by a map
$$\pi_*\ff\to \pi_*\ff\otimes \oo_\po(k).$$
In what follows, we will state and prove the equivariant version of this correspondence.

Let $\cc^*$ denote the subtorus $\{(t_1,1)\}\subset T$. Then $\cc^*$ naturally acts on $\po$. We need to fix a $\cc^*$-equivariant structure of $\oo_\po(k)$.
Let $U_i$ be the intersection of the open set $U_{\sigma_i}$ with
$\po$ for $i=1,2$. Then, $\{U_i\}$ is an affine open cover of $\po$.
We fix a $\cc^*$-equivariant structure of $\op(k)$ by the weight space
decomposition on each open set
\[\Gamma(U_1,\op(k))=\bigoplus_{m\ge 0}\cc_m, \ \Gamma(U_2,\op(k))=\bigoplus_{m\le k}\cc_m,\]
where $\cc_m$ is a one-dimensional representation of $T$ with
character $\chi(t)=t^m$, $m\in\zz$.

Given a $\cc^*$-equivariant sheaf $\ff$ on $\po$ with
\[\Gamma(U_i,\ff)_m= F^i(m),\]
we will use the natural equivariant structure on $\ff\otimes \op(k)$ given by
\begin{equation}\label{ok}
\begin{array}{l}
\Gamma(U_1,\ff\otimes \op(k))_m= F^1(m) \\
\Gamma(U_2,\ff\otimes \op(k))_m= F^2(m-k).
\end{array}
\end{equation}

Now, let $\ff$ be a $T$-equivariant sheaf on $Y$. We consider $j$th row of the weight space decompositions.
Let $\ff_j$ be the sheaf defined by
\[ \Gamma(U_i,\ff_j)_{(m_1,m_2)}=\begin{cases}  \Gamma(U_i,\ff)_{(m_1,m_2)}&  \text{ if } m_2=j\\ 0 & \text{ else.
}\end{cases}\] Then, $\ff_j$ has scheme theoretic support $\po$ and
hence decomposes into equivariant line bundles by Theorem
\ref{decomp}.

\begin{thm}\label{m}
Let $\cc^*$ denote the subtorus $\{(t_1,1)\}\subset T$. A pure $T$-equivariant sheaf $\ff$ on $Y$ is determined by a collection $\{\ff_j,\phi_j\}$, where each $\ff_j$ is a locally free $\cc^*$-equivariant sheaf on $\po$ for $ 0 \le j\le B$, and $\phi_j \colon \ff_j \to
\ff_{j+1}\otimes \oo_\po(k)$ are $\cc^*$-equivariant maps.

Moreover, let $\ff$ and $\gc$ be $T$-equivariant sheaves on $Y$ corresponding to $\{\ff_j,\phi_j\}$ and
$\{\gc_j,\psi_j\}$ respectively. Then $\ff$ and $\gc$ are isomorphic to each other if and only if there exist isomorphisms $\mu_j : \ff_j \to \gc_j$ such that $\mu_{j+1}\circ\phi_j=\phi_{j+1}\circ\mu_j$.
\end{thm}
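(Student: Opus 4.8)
The plan is to deduce the statement from the affine pushforward equivalence, combined with a weight decomposition that separates the two factors of $T=(\cc^*)^2$. Write $T=\cc^*\times\cc^*$, where the first factor is the base torus $\cc^*=\{(t_1,1)\}$ acting on $\po$ and the second factor acts on the fibers of $\oo_\po(k)$; the fiber weight is exactly the $m_2$-grading of Proposition~\ref{class}. Since $\pi\colon Y\to\po$ is affine, $\pi_*$ gives an equivalence between $\oo_Y$-modules and $\pi_*\oo_Y$-modules on $\po$ \cite[Ex.~II.5.17]{har}, and this equivalence is $T$-equivariant. Under $\pi_*\oo_Y\simeq{\rm Sym}(\oo_\po(-k))$, which is freely generated as an $\oo_\po$-algebra by its degree-one piece $\oo_\po(-k)$ with no relations, a $\pi_*\oo_Y$-module structure on a quasicoherent sheaf $\mathcal{M}$ is the same as a single $\oo_\po$-linear map $\mathcal{M}\to\mathcal{M}\otimes\oo_\po(k)$, namely the adjoint of multiplication by the degree-one generator. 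This is the non-equivariant correspondence already recalled in the text, and I would promote it to the equivariant setting.

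First I would apply $\pi_*$ to a pure $T$-equivariant sheaf $\ff$ to obtain a $T$-equivariant $\pi_*\oo_Y$-module $\mathcal{M}=\pi_*\ff$. Decomposing $\mathcal{M}$ into fiber-weight spaces gives $\mathcal{M}=\bigoplus_j\ff_j$, where $\ff_j$ is precisely the $j$-th row defined just before the theorem; by condition~(1) of Proposition~\ref{class} (with $A=0$) the grading is concentrated in $0\le j\le B$, and each $\ff_j$ is a locally free $\cc^*$-equivariant sheaf on $\po$, as already noted via Theorem~\ref{decomp}. The degree-one generator of ${\rm Sym}(\oo_\po(-k))$ raises the fiber weight by one, so the module structure map $\mathcal{M}\to\mathcal{M}\otimes\oo_\po(k)$ decomposes into graded components $\phi_j\colon\ff_j\to\ff_{j+1}\otimes\oo_\po(k)$; the $T$-equivariance of the structure map forces each $\phi_j$ to be $\cc^*$-equivariant for the base torus, where $\ff_{j+1}\otimes\oo_\po(k)$ carries the equivariant structure \eqref{ok}. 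Conversely, a collection $\{\ff_j,\phi_j\}$ assembles into a fiber-weight-graded sheaf $\bigoplus_j\ff_j$ together with the weight-raising map $\bigoplus_j\phi_j$; because ${\rm Sym}$ of a line bundle is free on one generator, no compatibility among the $\phi_j$ is required, and this data defines a $T$-equivariant $\pi_*\oo_Y$-module, hence via the equivalence a pure $T$-equivariant sheaf on $Y$. This shows $\ff$ is determined by $\{\ff_j,\phi_j\}$.

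For the isomorphism criterion I would again use that $\pi_*$ is a $T$-equivariant equivalence: an isomorphism $\ff\simeq\gc$ corresponds to an isomorphism $\pi_*\ff\simeq\pi_*\gc$ of $T$-equivariant $\pi_*\oo_Y$-modules. Any $T$-equivariant homomorphism preserves the fiber-weight grading, so it splits as a family of base-$\cc^*$-equivariant maps $\mu_j\colon\ff_j\to\gc_j$, and it is an isomorphism exactly when every $\mu_j$ is. Finally, $\pi_*\oo_Y$-linearity is equivalent to commuting with the action of the degree-one generator, which in graded components is precisely the stated compatibility, i.e. the squares built from $\phi_j$, $\psi_j$, $\mu_j$ and $\mu_{j+1}\otimes{\rm id}$ commute. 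Conversely, such $\mu_j$ patch to a $\pi_*\oo_Y$-linear isomorphism, giving the claimed criterion.

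The routine part is the affine-pushforward dictionary; the main obstacle is the equivariant bookkeeping. I must check that the fiber-weight grading of $\pi_*\ff$ coincides with the $m_2$-grading, that the ${\rm Sym}$-generator shifts $m_2$ by exactly one, and that the graded components of the structure map land in $\ff_{j+1}\otimes\oo_\po(k)$ with the specific equivariant structure \eqref{ok} fixed earlier rather than with some other twist. Keeping the base weights $m_1$ and the fiber weights $m_2$ disentangled through the identification $\pi_*\oo_Y\simeq{\rm Sym}(\oo_\po(-k))$ is where all the care is needed; everything else follows formally from the equivalence and the freeness of ${\rm Sym}$ on a rank-one generator.
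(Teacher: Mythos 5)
Your proposal is correct and follows essentially the same route as the paper: the paper likewise extracts the rows $\ff_j$ as the fiber-weight spaces from Proposition~\ref{class}, obtains the $\cc^*$-structure from the horizontal maps, identifies the vertical maps (after the basis change in $M^2$) with $\cc^*$-equivariant maps $\phi_j\colon\ff_j\to\ff_{j+1}\otimes\oo_\po(k)$ carrying the structure \eqref{ok}, and deduces the isomorphism criterion from the equivalence between $\oo_Y$-modules and $\pi_*\oo_Y$-modules. The only difference is presentational: where you package the vertical data globally via the freeness of $\pi_*\oo_Y\simeq{\rm Sym}(\oo_\po(-k))$ on its degree-one generator and flag the verification of the twist \eqref{ok} as bookkeeping, the paper performs that check explicitly by rewriting $\chi^2_{(m_1,j),(m_1,j+1)}$ in the standard basis of $M$ as a map $F^2(-m_1-kj,j)\to F^2(-m_1-kj-k,j+1)$ --- precisely the computation you correctly identify as the care point.
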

\begin{proof}
By Proposition \ref{class}, $\ff$ is given by $\{F^i(m), \chi^i_{m,m'}\}$. For $ 0 \le j\le B$, define $\ff_j$ as above. The horizontal maps $\chi^i_{(m_1,j),(m_1+1,j)}$ endows $\ff_j$ with a $\cc^*$-equivariant structure.
It remains to consider the vertical maps $\chi^i_{(m_1,j),(m_1,j+1)}$.

Recall that we are using different basis of $M$ for $\chi^1$ and
$\chi^2$. The $(m_1,j)$ in the subscript means $m_1(1,0)+j(0,1)$ for
$\chi^1$ and $m_1(-1,0)+j(-k,1)$ for $\chi^2$. Rewrite in the
standard basis of $M$,
\begin{align}
\chi^1_{(m_1,j),(m_1,j+1)}&\colon F^1(m_1,j)\to F^1(m_1,j+1)\notag\\
\chi^2_{(m_1,j),(m_1,j+1)}&\colon F^2(-m_1-kj,j)\to
F^2(-m_1-kj-k,j+1).\notag
\end{align}
Thus, these will define an equivariant morphism
\[\phi_j\colon \ff_j\to\ff_{j+1}\otimes \oo_\po(k)\]
by \eqref{ok}.

Conversely, an equivariant structure on $\ff_j$ will give an weight space decomposition and horizontal maps $\chi^i_{(m_1,j),(m_1+1,j)}$. The equivariant morphisms $\phi_j$ define $\chi^i_{(m_1,j),(m_1,j+1)}$ which commute with horizontal maps. Hence the data $(\ff_j, \phi_j)$ determine $\ff$ by Proposition~\ref{class}.

The second statement is a straightforward consequence of the equivalence between the
category of $\oo_Y$-modules and the category of $\pi_*\oo_Y$-modules on $\po$.
\end{proof}

\begin{rmk}
\begin{enumerate}
  \item We have a decomposition $\ff_j\simeq \bigoplus_{i=1}^{d_j}\oo_\po(a_{ij})$ by Theorem~\ref{decomp}.
Then, it is clear that \[\pi_*\ff\simeq \displaystyle{\bigoplus_{j=0}^B\bigoplus_{i=1}^{d_j}\oo_\po(a_{ij})}.\]\
If $P_\ff(n)=dn+\chi(\ff)$. Then
\[d=\sum_{j=0}^B d_i \text{  and  } \chi(\ff)=\sum_{j=0}^B\sum_{i=1}^{d_j} (a_{ij}+1). \]
  \item We will call the collection $\{\ff_j,\phi_j\}$ associated to a sheaf $\ff$ a \emph{$\pi_*\oo_Y$-module structure} of $\ff$.
  \item By \cite{kaneyama}, a locally free sheaf $\ff$ on $\po$ is $\cc^*$-equivariant if and only if it is $\cc^*$-fixed, that is, if there exists an isomorphism $t^*\ff\simeq\ff$ for all $t\in\cc^*$. Moreover, a map $\phi\colon\ff\to \gc$ between equivariant sheaves are equivariant if and only if $t^*\phi$ is conjugate to $\phi$ with respect to automorphisms of $\ff$ and $\gc$.
  \end{enumerate}
\end{rmk}

To test the stability, we only need to test for equivariant
subsheaves.
\begin{prop}\label{stab}
Suppose $X$ is a projective variety with a torus action. Let $\ff$
be a pure equivariant sheaf on $X$. Then $\ff$ is (Gieseker) stable
if and only if $p_{\mathcal{G}}<p_\ff$ for any proper equivariant
subsheaf $\mathcal{G}$.
\end{prop}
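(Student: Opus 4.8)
The forward implication is immediate, since an equivariant subsheaf is in particular a subsheaf, so only the converse has content. I would prove it in contrapositive form: assuming $\ff$ is not stable, I will manufacture a proper \emph{equivariant} subsheaf $\gc$ with $p_\gc \ge p_\ff$, contradicting the hypothesis. The organizing principle is that the canonical subsheaves attached to $\ff$ by the (semi)stability formalism are \emph{unique}, hence preserved by every automorphism of $\ff$; since $T$ is connected and $\ff$ is $T$-equivariant, any such canonical subsheaf is $T$-invariant, and a $T$-invariant subsheaf of an equivariant sheaf inherits a compatible equivariant structure, the equivariant isomorphism $\Phi$ of $\ff$ simply restricting to it.

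If $\ff$ is not even semistable, I would take $\gc$ to be the maximal destabilizing subsheaf, i.e.\ the first step of the Harder--Narasimhan filtration of $\ff$. Its reduced Hilbert polynomial is the maximum of $p_{\gc'}$ over all subsheaves $\gc'$, so in this case $p_\gc > p_\ff$; in particular $\gc \neq \ff$ is proper. By uniqueness of the Harder--Narasimhan filtration \cite{hl}, $\gc$ is fixed by the $T$-action and hence equivariant, giving the desired destabilizer.

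The remaining case, where $\ff$ is semistable but not stable, is the delicate one. Now every proper subsheaf already satisfies $p_{\gc'} \le p_\ff$, so instead I seek a proper equivariant subsheaf with $p_\gc = p_\ff$. The natural candidate is the socle of $\ff$, the maximal polystable subsheaf of reduced Hilbert polynomial $p_\ff$, which is again canonical and therefore $T$-equivariant; when it is proper it does the job. The genuine obstacle is the polystable case, in which the socle is all of $\ff$. I plan to handle this by passing to the isotypic decomposition $\ff \simeq \bigoplus_i E_i \otimes V_i$ into distinct stable summands $E_i$ with multiplicity spaces $V_i = \Hom(E_i,\ff)$. Because $T$ is connected it cannot permute the finitely many isomorphism types $E_i$, so each isotypic factor $E_i \otimes V_i$ is $T$-invariant; if there is more than one factor, any single factor is a proper equivariant subsheaf with $p = p_\ff$. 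If there is only one factor, $\ff \simeq E \otimes V$ with $\dim V \ge 2$, then the $T$-action descends to a linear, hence diagonalizable, action on $V$, and a torus weight line $V' \subset V$ yields the proper equivariant subsheaf $E \otimes V'$. Checking that $T$ genuinely acts only through the multiplicity spaces, and that these invariant subsheaves carry compatible equivariant structures, is the step I expect to demand the most care.
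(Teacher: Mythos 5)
Note first that the paper contains no proof of this proposition to compare against: its ``proof'' is the single citation \cite[Proposition 3.19]{kool}. So your argument stands or falls on its own merits, and as a self-contained substitute for the external reference it is structured correctly. The forward direction is indeed trivial, and your canonical-subsheaf principle is the right organizing idea: the maximal destabilizing subsheaf and the socle are preserved by every automorphism of $\ff$ and by every pullback $t^*$ (note in passing that $t^*$ preserves Hilbert polynomials even if $t$ does not fix $\oo_X(1)$, because $T$ is connected and $\chi$ is constant in flat families), and the passage from fiberwise invariance to ``$\Phi$ restricts'' is the standard argument that $\Phi(\sigma^*\gc)$ and $p^*\gc$ are $T$-flat subsheaves of $p^*\ff$ agreeing on closed fibers of the reduced scheme $T$, hence equal, after which the cocycle condition restricts automatically. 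Your treatment of the polystable case with several isotypic factors is also fine: the loci $\{t\in T\colon \Hom(t^*E_i,E_j)\neq 0\}$ are closed by semicontinuity, pairwise disjoint and cover $T$, so connectedness pins $t^*E_i$ to the type of $E_i$ and each isotypic factor is a canonical, hence equivariant, proper destabilizer.

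The one step that is not yet a proof is exactly the one you flagged: in the single-isotypic case $\ff\simeq E\otimes V$, the assertion that ``the $T$-action descends to a linear, hence diagonalizable, action on $V$'' is not automatic. What the argument gives for free is only $t^*E\simeq E$ for each $t$, with the isomorphism unique up to scalar since $E$ is stable, hence simple; without first equipping $E$ itself with an equivariant structure you obtain only a projective representation $T\to PGL(V)$, and lifting fixedness to equivariance is precisely the nontrivial content of Proposition \ref{prop:tfixedequaltequi}. There are three clean repairs: (i) invoke that proposition for the stable sheaf $E$ (it is quoted from \cite{kaneyama} and \cite{kool} independently of the present statement, so no circularity arises, though it is stated for toric $X$); (ii) prove the lift directly: the central extension $1\to\cc^*\to\widetilde{T}\to T\to 1$ formed by the choices of isomorphisms $t^*E\simeq E$ has a bimultiplicative commutator pairing $T\times T\to\cc^*$, which vanishes because $T$ is connected and the character lattice is discrete, so $\widetilde{T}$ is a commutative extension of a torus by $\cc^*$ and splits, the splitting being an equivariant structure on $E$; or (iii) bypass the lift entirely: $T$ acts algebraically on $\pp(V)$ through $PGL(V)$, a torus acting on a projective variety has a fixed point, and a fixed line $V'\subset V$ yields a $T$-invariant proper subsheaf $E\otimes V'$ with reduced Hilbert polynomial $p_\ff$, which then inherits its equivariant structure by the restriction argument of your first paragraph. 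With any one of these insertions your proof is complete.
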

\begin{proof}
See \cite[Proposition 3.19]{kool}
\end{proof}

Therefore, a sheaf $\ff$ associated to $\{\ff_j, \phi_j\}$ is stable
if and only if for any $\pi_*\oo_Y$-submodule $\gc=\{\gc_j,
\psi_j\}$, that is, a collection of equivariant subsheaves
$\mathcal{G}_j\subset\ff_j$ compatible with $\phi_i$, we have
\[\frac{\chi(\mathcal{G})}{r(\mathcal{G})}<\frac{\chi(\ff)}{d},\]
where $r(\mathcal{G})$ is the multiplicity of $\mathcal{G}$ along
$\po$.

\begin{defn}
For a pure equivariant sheaf $\ff$ as in Theorem~\ref{m}, we will
call $(d_0,d_1,\cdots,d_B)$ the \emph{type} of $\ff$.
\end{defn}

\section{Enumeration of Equivariant Sheaves}\label{classification}
Using the classification given in the previous section, we want to
count the (virtual) number of $T$-equivariant sheaves.
\begin{defn}
Let $M_{(d_0,\cdots,d_B)}^T(k)$ denote the subscheme of $M_d(k)$
which consists of stable $T$-equivariant sheaves of type
$(d_0,\cdots,d_B)$ with $d=\sum_{j=0}^B d_j$. We define
\begin{align}
N_d(k)&=e_{\rm top}(M_d(k)),\notag\\
N_{(d_0,\cdots,d_B)}(k)&=e_{\rm
top}(M_{(d_0,\cdots,d_B)}^T(k)).\notag\end{align}
\end{defn}
In Section \ref{residue}, we will see that the BPS invariant $n_d(k)$ is given by the signed topological Euler characteristic. Hence, it suffices to compute $N_d(k)$. By Proposition \ref{prop:tfixedequaltequi}, it is clear from the localization formula that
\begin{equation}\label{localization}
N_d(k)=\sum_{\substack{(d_0,\cdots,d_B)\\d_0+\cdots+d_B=d }}N_{(d_0,\cdots,d_B)}(k).
\end{equation}

\subsection{Type $(1^d)$}

Let $(1^d)$ denote $(1,1,\cdots,1)$ with $1$ repeated $d$ times. Let
$\ff$ be a $T$-equivariant sheaf of type $(1^d)$ whose
$\pi_*\oo_Y$-module structure is $\{\ff_j,\phi_j\}$. Assume
$\ff_j\simeq \op(a_j)$ for $0\le j \le d-1$. Then, since
$\chi(\ff)=1$, we have
\[\sum_{j=0}^{d-1} (a_j+1)=1.\]
Let $x$ and $y$ be homogeneous coordinates of $\po$. By Theorem~\ref{m}, the map $\phi_j$ is given
by a monomial in $x$ and $y$ of degree $a_{j+1}-a_j+k$.

\begin{prop}\label{1d}
$\ff$ of type $(1^d)$ is stable if and only if $\phi_j$'s are all
nonzero and
\[ \sum_{j=0}^{h-1} (a_j+1) \ge 1\]
for any $1\le h\le d$.
\end{prop}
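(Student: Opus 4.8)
The plan is to run everything through the stability criterion stated just after Proposition~\ref{stab}: the sheaf $\ff$ is stable precisely when every proper nonzero $\pi_*\oo_Y$-submodule $\gc=\{\gc_j,\psi_j\}$ satisfies $\chi(\gc)/r(\gc)<1/d$, where a submodule is a collection of equivariant subsheaves $\gc_j\subseteq\ff_j$ with $\phi_j(\gc_j)\subseteq\gc_{j+1}\otimes\op(k)$. Writing $\ff_j\simeq\op(a_j)$ and setting $c_h=\sum_{j=0}^{h-1}(a_j+1)$, we have $c_0=0$ and $c_d=\chi(\ff)=1$, and the claimed numerical condition is exactly $c_h\ge 1$ for $1\le h\le d$. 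The computational core is the elementary equivalence
\[\frac{1-c_h}{d-h}<\frac{1}{d}\iff c_h>\frac{h}{d}\iff c_h\ge 1,\]
where the last step uses $c_h\in\zz$ together with $0<h/d<1$ for $1\le h\le d-1$. Since each $\ff_j$ has multiplicity $1$ along $\po$, the multiplicity of a submodule equals the number of rows on which it is nonzero.

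For necessity I would test stability against two explicit families. The \emph{top truncations} $\ff_{\ge h}=\{\ff_h,\dots,\ff_{d-1}\}$ are always submodules (they are automatically closed under the upward maps $\phi_j$), with $\chi=1-c_h$ and multiplicity $d-h$; the displayed equivalence converts their stability inequalities directly into $c_h\ge 1$ for $1\le h\le d-1$ (the case $h=d$ being trivial). To force $\phi_j\neq 0$, I observe that if some $\phi_j$ vanished, then the \emph{bottom truncation} $\ff_{\le j}$ would also be a submodule, since its only exit map is $\phi_j=0$; its slope inequality would give $c_{j+1}\le 0$, contradicting $c_{j+1}\ge 1$ just obtained.

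For sufficiency, assume all $\phi_j\neq 0$ and all $c_h\ge 1$, and take an arbitrary proper nonzero submodule $\gc$ with support $S=\{j:\gc_j\neq 0\}$. The key step is to show $S$ must be an upward-closed set $\{h,\dots,d-1\}$: a nonzero $\phi_j$ is multiplication by a nonzero monomial, hence an injection of line bundles on $\po$, so the containment $\phi_j(\gc_j)\subseteq\gc_{j+1}\otimes\op(k)$ with $\gc_j\neq 0$ forces $\gc_{j+1}\neq 0$. Once the support is pinned down, I bound $\chi(\gc)=\sum_{j\ge h}(\deg\gc_j+1)\le\sum_{j\ge h}(a_j+1)=1-c_h$ using $\gc_j\subseteq\ff_j$, and conclude by the displayed equivalence when $h\ge 1$. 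The remaining case $h=0$ (full support, yet proper) is handled separately: properness forces $\chi(\gc)<\chi(\ff)=1$, hence $\chi(\gc)\le 0$ by integrality, so the slope is $\le 0<1/d$.

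The main obstacle is exactly this support argument in the sufficiency direction: ruling out ``exotic'' submodules whose support is not a top truncation, or which are proper within individual rows. Injectivity of the monomial maps $\phi_j$ is what makes every submodule's support upward-closed and thus realizes it as a sub-object of a top truncation, after which the count is forced; the remaining inequalities are then purely the integrality computation above.
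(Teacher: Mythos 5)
Your proof is correct and takes essentially the same route as the paper: stability is tested against the top truncations $\gc_j=\ff_j$ for $j\ge h$ (and $0$ below), whose slope inequalities $\frac{1-c_h}{d-h}<\frac{1}{d}$ are, by integrality, exactly $\sum_{j=0}^{h-1}(a_j+1)\ge 1$. The only differences are that you spell out what the paper asserts without justification---that injectivity of the nonzero monomial maps $\phi_j$ makes every equivariant submodule's support upward-closed, so checking top truncations suffices---and that you obtain $\phi_j\neq 0$ from a bottom-truncation slope computation instead of the paper's appeal to indecomposability; both refinements are sound.
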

\begin{proof}
Assuming $\ff$ is stable, it is indecomposable and therefore all $\phi_j$'s are nonzero. 
To check the stability, it is enough to check for the subsheaf $\gc$
with
\[
\gc_j=\begin{cases}\ff_j & \text{ if } j\ge h\\
0 &\text{ else}\end{cases}\] for $0\le h\le d-1$. Hence, the
stability condition is
\[ \sum_{j=0}^{h-1} (a_j+1) \ge 1\]
where the left-hand side is the Euler characteristic of $\ff/\gc$.
\end{proof}

\begin{cor}\label{count1}
$N_{(1^d)}$ is equal to
\[\sum_{\lambda_{d-1}\ge\cdots \ge\lambda_0\ge 0}
\prod_{j=0}^{d-2}(\lambda_{j+1}-\lambda_{j}+1)
\]where the sum runs over all $\lambda_{d-1}\ge\cdots \ge\lambda_0\ge
0$ such that
\[\sum_{j=0}^{d-1} \lambda_j=\frac{d(d-1)}{2}k-(d-1)
\] and for any $1\le
h\le d$,
\[\sum_{j=0}^{h-1} \lambda_j\ge \frac{h(h-1)}{2}k-(h-1).\]
\end{cor}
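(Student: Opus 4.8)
The plan is to enumerate the stable $T$-equivariant sheaves of type $(1^d)$ explicitly, using the results already in hand, and then to match the resulting count with the stated sum by an affine change of summation variable. By Theorem~\ref{m} such a sheaf is a collection $\{\ff_j,\phi_j\}_{j=0}^{d-1}$ with $\ff_j\simeq\op(a_j)$ and each $\phi_j\colon\op(a_j)\to\op(a_{j+1})\otimes\op(k)$ equivariant, hence (as recorded just before Proposition~\ref{1d}) a monomial in $x,y$ of degree $e_j:=a_{j+1}-a_j+k$. The normalization $\chi(\ff)=1$ reads $\sum_{j=0}^{d-1}(a_j+1)=1$, and Proposition~\ref{1d} tells us $\ff$ is stable precisely when every $\phi_j\neq0$ and $\sum_{j=0}^{h-1}(a_j+1)\ge1$ for $1\le h\le d$.

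First I would fix the discrete data $(a_0,\dots,a_{d-1})$ and count the sheaves lying over it. A nonzero equivariant $\phi_j$ exists if and only if $e_j\ge0$, and in that case it is one of the $e_j+1$ monomials $x^{s}y^{e_j-s}$, $0\le s\le e_j$ (the choice of monomial being exactly the choice of relative equivariant structure between $\ff_j$ and $\ff_{j+1}$). By the isomorphism criterion of Theorem~\ref{m}, together with the fact that equivariant automorphisms of a line bundle are scalars, two such collections over the same $(a_j)$ are isomorphic iff their monomials agree term by term; thus distinct tuples of exponents give non-isomorphic sheaves. Since everything in sight is rigid, $M^T_{(1^d)}(k)$ is a finite set of reduced points, and the stratum over a fixed $(a_j)$ contributes exactly $\prod_{j=0}^{d-2}(e_j+1)$ of them. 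Taking Euler characteristics (which for a finite set is just the number of points) gives
\[
N_{(1^d)}(k)=\sum_{(a_j)}\ \prod_{j=0}^{d-2}\bigl(a_{j+1}-a_j+k+1\bigr),
\]
the sum ranging over $(a_0,\dots,a_{d-1})\in\zz^d$ with $\sum_{j}(a_j+1)=1$, with $e_j\ge0$ for all $j$, and with $\sum_{j=0}^{h-1}(a_j+1)\ge1$ for $1\le h\le d$.

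It then remains to substitute $\lambda_j:=a_j+jk$ and to check that the index set and the summand become those of the corollary. Under this substitution $\lambda_{j+1}-\lambda_j=e_j$, so the factor $e_j+1$ becomes $\lambda_{j+1}-\lambda_j+1$ and the condition $e_j\ge0$ becomes the monotonicity $\lambda_{d-1}\ge\cdots\ge\lambda_0$; the $h=1$ stability inequality $a_0\ge0$ becomes $\lambda_0\ge0$; since $\sum_{j=0}^{h-1}jk=\frac{h(h-1)}{2}k$, the partial-sum conditions become $\sum_{j=0}^{h-1}\lambda_j\ge\frac{h(h-1)}{2}k-(h-1)$; and the normalization $\sum_j(a_j+1)=1$ turns into the equality $\sum_{j=0}^{d-1}\lambda_j=\frac{d(d-1)}{2}k-(d-1)$, which is the $h=d$ case forced to hold with equality. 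This is routine bookkeeping once the sum over $(a_j)$ is established.

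The main obstacle is the first step: justifying that over a given $(a_j)$ the only $T$-fixed stable sheaves are the monomial collections, that distinct monomials yield non-isomorphic sheaves, and that there are no positive-dimensional fixed loci or coincidences, so that the contribution is genuinely $\prod_{j=0}^{d-2}(e_j+1)$. This hinges entirely on Theorem~\ref{m} (the equivalence of categories and its isomorphism criterion) together with the scalar-only automorphisms of equivariant line bundles; once these are invoked, the remaining change of variables and verification of the constraints are elementary.
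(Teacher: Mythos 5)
Your proposal is correct and takes essentially the same route as the paper's proof: both enumerate the nonzero equivariant monomial maps $\phi_j$ of degree $a_{j+1}-a_j+k$ (normalizing coefficients to $1$ by scaling isomorphisms, giving $a_{j+1}-a_j+k+1$ choices each) and then apply the identical substitution $\lambda_j=a_j+jk$ to translate the Euler-characteristic condition and the stability inequalities of Proposition~\ref{1d} into the stated constraints. The only difference is that you make explicit the rigidity and non-isomorphism bookkeeping (via Theorem~\ref{m} and scalar automorphisms) that the paper leaves implicit and records separately in Corollary~\ref{isolated}.
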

\begin{proof}
Since $\phi_j$ is nonzero, we have $a_j\le a_{j+1}+k$. We let
$$\lambda_j=a_j+jk$$ so that $\lambda_{d-1}\ge \lambda_{d-2}\ge\cdots
\ge\lambda_0\ge 0$. Then, $\phi_j$ is a monomial of degree
\[a_{j+1}-a_j+k =\lambda_{j+1}-\lambda_{j}.\]
Each coefficient of the monomial $\phi_j$ can
be set to be 1 by scaling isomorphisms. So, we have
$\lambda_{j+1}-\lambda_{j}+1$ choices for $\phi_j$. The condition
for $\lambda_j$'s can easily be seen to be equivalent to the
condition in $a_j$'s in the previous proposition.
\end{proof}

\subsection{Types $(n,1^d)$ and $(1^d,n)$}

We will use the following lemma frequently.

\begin{convention}
For a monomial $\alpha$ in $x$ and $y$, we set ${\rm
gcd}(\alpha,0)=\alpha$. Hence, ${\rm deg}({\rm gcd}(\alpha,0))={\rm
deg}(\alpha).$
\end{convention}

\begin{lem}\label{deg}
Suppose
\[\phi=(\alpha_1, \alpha_2)\colon\op(a_1)\oplus \op(a_2)\to\op(b)\otimes \op(k) \]
\[\psi=(\beta_1, \beta_2)^t\colon\op(c) \to(\op(d_1)\oplus\op(d_2))\otimes \op(k)\]
are nonzero maps between sheaves on $\po$ where $\alpha_1$,
$\alpha_2$, $\beta_1$, and $\beta_2$ are monomials of appropriate
degrees in the homogeneous coordinates $x$ and $y$. 
Let $K$ be the
kernel of $\phi$ and $Q$ be such that $Q\otimes \op(k)$ be the
torsion-free part of the cokernel of $\psi$. Then
\begin{align}{\rm deg} K &= a_1+a_2-b-k+{\rm deg}({\rm
gcd}(\alpha_1,\alpha_2)),\label{degk}\\
{\rm deg} Q &= d_1 +d_2 -c+k-{\rm deg}({\rm
gcd}(\beta_1,\beta_2)).\label{degq}
\end{align}

\end{lem}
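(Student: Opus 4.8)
The plan is to establish both formulas by the same device: factor out the greatest common divisor of the relevant pair of monomials so that the remaining entries become coprime, and then read off the degree from honest short exact sequences of sheaves on $\po$, using additivity of degree. Throughout I identify a line bundle on $\po$ with its degree, and I use two standard facts about the smooth projective curve $\po$: any subsheaf of a locally free sheaf is locally free, and any nonzero map out of a line bundle is injective. The degenerate cases in which an entry vanishes are absorbed by the stated convention $\gcd(\alpha,0)=\alpha$.

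For \eqref{degk}, I would write $g=\gcd(\alpha_1,\alpha_2)$ and $\alpha_i=g\,\alpha_i'$ with $\gcd(\alpha_1',\alpha_2')=1$, so that $\phi$ factors as $\op(a_1)\oplus\op(a_2)\xrightarrow{(\alpha_1',\alpha_2')}\op(b+k-{\rm deg}\,g)\xrightarrow{\,g\,}\op(b+k)$. The coprime entries $\alpha_1',\alpha_2'$ have disjoint vanishing loci on $\po$, hence do not vanish simultaneously, so the first arrow is surjective; as $g$ is injective, the image of $\phi$ is isomorphic to $\op(b+k-{\rm deg}\,g)$. Since $K$ is the kernel, it is a line bundle fitting in $0\to K\to\op(a_1)\oplus\op(a_2)\to\op(b+k-{\rm deg}\,g)\to 0$, and additivity of degree gives ${\rm deg}\,K=a_1+a_2-(b+k-{\rm deg}\,g)$, which is precisely \eqref{degk}.

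For \eqref{degq}, the map $\psi$ is injective, being a nonzero map from a line bundle. Writing $h=\gcd(\beta_1,\beta_2)$ and $\beta_i=h\,\beta_i'$ with $\gcd(\beta_1',\beta_2')=1$, I factor $\psi$ as $\op(c)\xrightarrow{\,h\,}\op(c+{\rm deg}\,h)\xrightarrow{(\beta_1',\beta_2')^t}\op(d_1+k)\oplus\op(d_2+k)$. The coprimeness of $\beta_1',\beta_2'$ makes the second arrow injective on fibres, hence a subbundle inclusion whose image is the saturation of ${\rm im}\,\psi$ and whose cokernel is locally free; moreover the torsion of the full cokernel ${\rm coker}\,\psi$ is exactly the quotient of this saturation by ${\rm im}\,\psi$. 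Consequently the torsion-free part of ${\rm coker}\,\psi$ is the cokernel of the subbundle inclusion, a line bundle of degree $(d_1+k)+(d_2+k)-(c+{\rm deg}\,h)$. Since this torsion-free part equals $Q\otimes\op(k)$, subtracting $k$ yields ${\rm deg}\,Q=d_1+d_2-c+k-{\rm deg}\,h$, which is \eqref{degq}.

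The degree bookkeeping is routine; the one point I must argue carefully is the geometric claim that underlies both factorizations, namely that two coprime monomials in $x,y$ cut out disjoint subschemes of $\po$, so that the gcd genuinely records their common zeros. I would verify this from the explicit description $\gcd(x^{p_1}y^{q_1},x^{p_2}y^{q_2})=x^{\min_i p_i}\,y^{\min_i q_i}$: coprimeness forces the exponent of each variable to vanish in at least one of the two factors, so the zero loci are contained in $\{x=0\}$ and $\{y=0\}$ respectively and cannot meet. This is exactly what lets me peel off $g$ (respectively $h$) as a torsion contribution of length ${\rm deg}\,g$ (respectively ${\rm deg}\,h$) and treat the coprime remainder as a genuine surjection (respectively subbundle inclusion), so I expect this to be the only step requiring real justification.
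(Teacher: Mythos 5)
Your proof is correct, and while it runs on the same engine as the paper's --- peel off $g=\gcd(\alpha_1,\alpha_2)$ (resp.\ $h=\gcd(\beta_1,\beta_2)$) and exploit that coprime monomials in $x,y$ have no common zero on $\po$ --- the execution is genuinely different. The paper proves \eqref{degk} by exhibiting an explicit syzygy: writing $q\alpha_1=p\alpha_2$ with $p=\alpha_1/g$ and $q=\alpha_2/g$, it identifies $K$ as the image of the inclusion $(q,-p)^t\colon \op(a_1+a_2-b-k+r)\to\op(a_1)\oplus\op(a_2)$, where $r=\deg g$, handles the case of a vanishing entry separately, and for \eqref{degq} says only that the proof is similar; the check that this image is all of $K$ (i.e.\ that the quotient by it is torsion-free, which is where coprimality of $p,q$ enters) is left implicit. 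You instead factor the map through the gcd, prove the coprime residual is surjective (for $\phi$) or a subbundle inclusion (for $\psi$), and read the degrees off short exact sequences by additivity; in particular your separation of the torsion of ${\rm coker}\,\psi$ (the length-$\deg h$ piece ${\rm sat}/{\rm im}$) from the locally free quotient spells out precisely the step the paper compresses into ``similar.'' What the paper's route buys is an explicit generator of $K$, a shape of map that gets reused later (e.g.\ the monomial inclusions $(p_1,\cdots,p_n)^t$ in the proof of Proposition~\ref{n1}); what yours buys is that all the content is isolated in one geometric claim, which you then actually verify. Two small points to tighten: your assertion that the zero loci lie ``in $\{x=0\}$ and $\{y=0\}$ respectively'' should be stated as a case split --- your $\min$-exponent computation shows a coprime pair consists of pure powers of distinct variables unless one member is a constant with empty zero locus, and disjointness holds in either case; and in the degenerate case where one entry vanishes, the residual pair after dividing by the gcd is $(0,1)$, for which surjectivity, resp.\ the subbundle property, is immediate, so your convention-based absorption of that case does go through and reproduces the paper's separate computation $K\simeq\op(a_1)$.
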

\begin{proof}
Let $r={\rm deg}({\rm gcd}(\alpha_1,\alpha_2))$. If either of
$\alpha_1$ or $\alpha_2$ is zero, by symmetry, we may assume
$\alpha_1$ is zero. Then, $\alpha_2$ is nonzero monomial of degree
$b-a_2+k$. So, $K\simeq \op(a_1)$ and we have \eqref{degk}. Now,
suppose $\alpha_1$ and $\alpha_2$ are both nonzero. Since $\alpha_i$
is a monomial of degree $b-a_i+k$, there are monomials $p$ and $q$
of degree $b-a_1+k-r$ and $b-a_2+k-r$ respectively, such that
$$q \alpha_1=p\alpha_2.$$ Then the image of the inclusion
$$\xymatrix @1{\op(a_1+a_2-b-k+r) \ar[r]^-{ \left(\begin{subarray}{c} q \\-p \\ \end{subarray} \right)} &
\oo_\po(a_1)\oplus\oo_\po(a_2)}$$ is $K$. Therefore, we have
\eqref{degk}.

The proof of \eqref{degq} is similar.
\end{proof}
\begin{defn}
Given a map $\psi$ as in the above lemma, we will call $Q$ the
\emph{torsion-free cokernel} of $\psi$.
\end{defn}

We start with types $(n,1)$ and $(1,n)$. Let $\ff$ be a
$T$-equivariant sheaf of type $(n,1)$ which corresponds to the
collection $(\{\ff_0, \ff_1\},\phi)$. Assume $\ff_0\simeq
\oplus_{i=1}^n \op(a_i)$ and $\ff_1\simeq \op(b)$ and
$\phi=(\alpha_1,\cdots, \alpha_n)$, where
\[\alpha_i\colon\op(a_i)\to \op(b)\otimes\op(k).\] Then,
$\chi(\ff)=1$ is equivalent to
\begin{equation}\label{chin1}
\sum_{i=1}^{n} (a_i+1)+ (b+1)=1.\end{equation} As before, by Theorem~\ref{m}, $\alpha_i$ is given by a
monomial in $x$ and $y$ of degree $b-a_i+k$ if $b-a_i+k\ge 0$.

\begin{prop}\label{n1}
$\ff$ of type $(n,1)$ is stable if and only if
\begin{itemize}
  \item $\alpha_i$'s are nonzero,
  \item $a_i\geq 0, \ \forall i $,
  \item for all $1\le i, j\le
n$, ${\rm deg}({\rm gcd}(\alpha_i,\alpha_j))\leq b-a_i-a_j+k-1$.
\end{itemize}
\end{prop}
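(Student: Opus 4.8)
The plan is to invoke Proposition~\ref{stab}, which reduces Gieseker stability to the comparison $\chi(\gc)/r(\gc)<\chi(\ff)/(n+1)=1/(n+1)$ over all proper nonzero \emph{equivariant} $\pi_*\oo_Y$-submodules $\gc=\{\gc_0,\gc_1,\psi\}$, where $\gc_0\subseteq\ff_0$ and $\gc_1\subseteq\ff_1=\op(b)$ are compatible with $\phi$ (equivalently, $\chi(\qc)/r(\qc)>1/(n+1)$ for every proper nonzero quotient module $\qc$). Since replacing $\gc$ by its equivariant saturation in $\ff$ preserves $r(\gc)$ and can only increase $\chi(\gc)$, it suffices to test saturated submodules (a full-multiplicity $\gc$ has $\chi(\gc)<\chi(\ff)=1$ and is harmless); for saturated $\gc$ the quotient $\ff/\gc$ is pure, so each $\ff_j/\gc_j$ is locally free and hence $\gc_1\in\{0,\op(b)\}$. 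This splits the analysis into two families, and I will match the three stated conditions to explicit test objects.

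For necessity I will exhibit a destabilizing sub/quotient whenever a condition fails. The projection $\ff\to(\op(a_i),0)$ is a valid quotient module, and stability forces $(a_i+1)/1>1/(n+1)$, i.e.\ $a_i\ge 0$; this gives the second condition. Granting $a_i\ge0$, if some $\alpha_i=0$ then $(\op(a_i),0)$ is a submodule with $\chi/r=a_i+1\ge1$, which destabilizes, so all $\alpha_i\neq0$. Finally, when $\alpha_i,\alpha_j\neq0$, Lemma~\ref{deg} shows the rank-one module $(K_{ij},0)$, with $K_{ij}=\ker(\phi|_{\op(a_i)\oplus\op(a_j)})\cong\op(a_i+a_j-b-k+\deg\gcd(\alpha_i,\alpha_j))$, is a submodule; stability then forces $\chi(K_{ij})=a_i+a_j-b-k+\deg\gcd(\alpha_i,\alpha_j)+1\le0$, which is exactly the third condition.

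For sufficiency I treat the two saturated families. If $\gc_1=\op(b)$, the quotient $\ff/\gc$ is supported entirely in row $0$, i.e.\ it is the locally free quotient $\ff_0/\gc_0$ of $\ff_0=\bigoplus\op(a_i)$; since all $a_i\ge0$ this quotient is globally generated, hence a sum of $\op(c_l)$ with $c_l\ge0$, so $\chi(\ff/\gc)/r\ge1>1/(n+1)$ and $\gc$ does not destabilize. If instead $\gc_1=0$, compatibility forces $\gc_0\subseteq K=\ker\phi$, and I must bound $\chi(\gc_0)$. Decomposing the equivariant bundle $\gc_0$ into equivariant line bundles via Theorem~\ref{decomp}, it is enough to show every equivariant sub-line-bundle $\op(f)\hookrightarrow K$ has $f\le-1$; then each summand $\op(f_m)$ of $\gc_0$ has $f_m\le-1$, so $\chi(\gc_0)=\sum_m(f_m+1)\le0$ and, as $r(\gc_0)\ge1$, $\chi(\gc_0)/r(\gc_0)\le0<1/(n+1)$.

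This last bound is the crux. An equivariant inclusion $\op(f)\hookrightarrow\ff_0$ has monomial components $\beta_i\colon\op(f)\to\op(a_i)$, and lying in $K$ means $\sum_i\alpha_i\beta_i=0$; since condition~(1) rules out a single nonzero $\beta_i$, at least two are nonzero. Each nonzero $\alpha_i\beta_i$ is a single monomial, so the vanishing of their sum forces two of them, say for indices $i\neq j$, to be the \emph{same} monomial $\mu$ of degree $b+k-f$. Then $\mu$ is a common multiple of $\alpha_i$ and $\alpha_j$, whence $\deg\mu\ge\deg\alpha_i+\deg\alpha_j-\deg\gcd(\alpha_i,\alpha_j)$; substituting $\deg\alpha_i=b-a_i+k$ and using the third condition yields $f\le a_i+a_j-b-k+\deg\gcd(\alpha_i,\alpha_j)\le-1$. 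The main obstacle is precisely this step: passing from the pairwise hypothesis in condition~(3) to a bound on arbitrary higher-rank equivariant subsheaves of $K$. It is resolved by the observation that equivariance reduces the kernel relation to monomials, that any nontrivial monomial relation must contain a coincident pair, and that the resulting least common multiple bound feeds directly into condition~(3).
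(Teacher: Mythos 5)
Your proof is correct and follows essentially the same route as the paper: the same case split on $\gc_1\in\{0,\op(b)\}$, the same test objects $(\op(a_i),0)$ and $K_{ij}$ via Lemma~\ref{deg}, and the same monomial-relation/lcm argument reducing arbitrary subsheaves of $\ker\phi$ to the pairwise condition. The only differences are expository: you make explicit the saturation step, the reduction to equivariant line subbundles via Theorem~\ref{decomp} (which the paper compresses into ``we may assume $\gc_0$ is a line bundle''), and you derive $\alpha_i\neq 0$ from a destabilizing submodule rather than indecomposability.
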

\begin{proof}
As before, for $\ff$ to be indecomposable, $\alpha_i$'s are nonzero. 
Let $\gc$ be a subsheaf of $\ff$ whose $\pi_*\oo_Y$-module structure
is $(\{\gc_0, \gc_1\},\psi)$. Since $\ff_1$ is of rank 1, it suffices to consider the two cases: $\gc_1=\ff_1$ or $\gc_1=0$.

Suppose $\gc_1=\ff_1$. Let $\gc_0\simeq \oplus_{i=1}^{r}\op(a_i')$
where $r$ is the rank of $\gc_0$. Without loss of generality, we may
assume $a_i$'s and $a_i'$'s are nonincreasing. Then for $1\le i\le
r$,
\[a_i'\le a_i\] because otherwise there does not exist an injective map
from $\gc_0$ to $\ff_0$. So, it is enough to check for the cases
$a_i'=a_i$, i.e., $\gc_0\simeq \oplus_{i=1}^{r}\op(a_i)$ for some
$0\le r\le n$. Then by looking at the quotients, it is easy to see
that the stability implies $a_i\geq 0$ for all $1\le i\le n$. Note
that if $\gc_0=0$, we have $b\le -1$ which is a consequence of
\eqref{chin1} and $a_i\geq 0$.

Now suppose $\gc_1=0$. Then, $\gc_0$ is a subsheaf of $K={\rm
ker}\phi$. Let $K_{ij}$ be the kernel of the restricted map
\[(\alpha_i,\alpha_j)\colon \op(a_i)\oplus\op(a_j) \to \op(b)\otimes \op(k).\]
When $\gc_0=K_{ij}$, by \eqref{degk}, the stability implies
\[a_i+a_j-b-k+{\rm deg}({\rm gcd}(\alpha_i,\alpha_j))\leq -1,\] which is the third condition.
For an arbitrary $\gc_0$, it suffices to show the degree of $ \gc_0$
is negative provided that the degrees of $K_{ij}$ are negative for
all $1\le i,j\le n$. We may assume that $\gc_0$ is a line
bundle. By Proposition~\ref{stab}, we may also assume $\gc_0$ is
equivariant subsheaf of $\ff_0$, that is, the inclusion
\[\gc_0\to
\bigoplus_{i=1}^{n}\op(a_i)\simeq \ff_0
\]
is given by a matrix with monomial entries. We write $(p_1,\cdots,p_n)^t$
for the inclusion map, where $p_i$'s are monomials. Then, we have
\[
\sum_{i=1}^n p_i\alpha_i=0.
\]
Since all terms are monomials and at least two terms are nonzero,
this implies that there exist $j_1$ and $j_2$ such that
$p_{j_1}\alpha_{j_1}$ and $p_{j_2}\alpha_{j_2}$ are nonzero and
proportional. Then ${\rm deg}(p_{j_1}\alpha_{j_1})\ge {\rm deg}({\rm
lcm}(\alpha_{j_1}, \alpha_{j_2}))$, and
\begin{align} {\rm deg}\ \gc_0&= a_{j_1}-{\rm deg}(p_{j_1})\le
a_{j_1}+{\rm deg}(\alpha_{j_1})-{\rm
deg}({\rm lcm}(\alpha_{j_1}, \alpha_{j_2}))\notag\\
&= a_{j_1}-{\rm deg}(\alpha_{j_2})+{\rm
deg}({\rm gcd}(\alpha_{j_1}, \alpha_{j_2}))\notag\\
&= a_{j_1}+a_{j_2}-b-k+{\rm deg}({\rm gcd}(\alpha_{j_1},
\alpha_{j_2}))\notag\\
&= {\rm deg} \ K_{j_1,j_2}\le -1.\notag
\end{align}
Hence it is enough to check for subsheaves
$K_{ij}$.\end{proof}

The type $(1,n)$ is dual to the type $(n,1)$. Now, assume
$\ff_0\simeq \op(c)$ and $\ff_1\simeq \oplus_{i=1}^n\op(d_i)$ and
$\phi=(\beta_1,\cdots, \beta_n)^t$, where
\[\beta_i\colon\op(c)\to \op(d_i)\otimes\op(k)\]
is given by a monomial in $x$ and $y$ of degree $d_i-c+k$. Then,
$\chi(\ff)=1$ is equivalent to \begin{equation}\label{chi1n}
(c+1)+\sum_{i=1}^{n} (d_i+1)=1.\end{equation}

\begin{prop}\label{1n}
$\ff$ of type $(1,n)$ is stable if and only if
\begin{itemize}
  \item $\beta_i$'s are nonzero,
  \item $d_i\leq -1, \ \forall i $,
  \item for all $1\le i, j\le
n$, ${\rm deg}({\rm gcd}(\beta_i,\beta_j))\leq d_i+d_j-c+k$.
\end{itemize}
\end{prop}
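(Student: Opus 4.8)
The plan is to run the same machine as in Proposition~\ref{n1}, but now to read the conditions off from quotients rather than subsheaves; indeed the type $(1,n)$ situation is the formal dual of the type $(n,1)$ one, with the pairwise kernels $K_{ij}$ of Lemma~\ref{deg} replaced by the pairwise torsion-free cokernels $Q_{ij}$. By Proposition~\ref{stab} it suffices to test stability against proper nonzero \emph{equivariant} subsheaves, and for a short exact sequence $0\to\gc\to\ff\to\ff''\to0$ the two slope inequalities $p_\gc<p_\ff$ and $p_\ff<p_{\ff''}$ are equivalent, so I may freely pass between subsheaves and torsion-free quotients. A stable sheaf is simple, hence indecomposable, and if some $\beta_i$ vanished then $\op(d_i)$ would split off $\ff_1$ as a direct summand of $\ff$; this gives the first condition $\beta_i\neq0$. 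Since it suffices to test saturated subsheaves, I may assume $\ff''=\ff/\gc$ is torsion-free; because $\ff_0\simeq\op(c)$ has rank one this forces $\gc_0\in\{0,\op(c)\}$ (equivalently $\ff_0''\in\{\op(c),0\}$), and this dichotomy organizes the whole argument.

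For necessity I would extract the two remaining conditions from explicit test objects. The submodule $\gc=\{0,\op(d_i)\}$ is admissible (the compatibility $\phi(\gc_0)\subseteq\gc_1\otimes\op(k)$ is vacuous), and stability forces $d_i+1=\chi(\gc)/r(\gc)<1/(n+1)$, i.e. $d_i\le-1$; together with \eqref{chi1n} this also makes $c\ge0$ automatic, mirroring the automatic bound $b\le-1$ in Proposition~\ref{n1}. For the third condition, fix $i\neq j$ and let $Q_{ij}$ be the torsion-free cokernel of the restricted map $(\beta_i,\beta_j)^t\colon\op(c)\to(\op(d_i)\oplus\op(d_j))\otimes\op(k)$. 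Killing the other summands of $\ff_1$ and then the saturated image of $\phi$ exhibits $\ff''=\{0,Q_{ij}\}$ as a torsion-free equivariant quotient of $\ff$, where $\phi$ descends precisely because $Q_{ij}\otimes\op(k)$ is the cokernel. Stability gives $\deg Q_{ij}+1=\chi(\ff'')/r(\ff'')>1/(n+1)$, hence $\deg Q_{ij}\ge0$, and \eqref{degq} of Lemma~\ref{deg} rewrites this as $\deg\gcd(\beta_i,\beta_j)\le d_i+d_j-c+k$.

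For sufficiency, assume the three conditions and test a saturated equivariant subsheaf $\gc$. If $\gc_0=0$, writing $\gc_1=\bigoplus_l\op(e_l)\subseteq\ff_1$, each line subbundle includes via a monomial into some $\op(d_m)$, so $e_l\le d_m\le-1$ and $\chi(\gc)=\sum_l(e_l+1)\le0<r(\gc)/(n+1)$. The remaining case $\gc_0=\op(c)$ is the heart of the argument: here $\ff''=\{0,\ff_1''\}$ with $\ff_1''$ a torsion-free quotient of $\ff_1$ through which $\phi$ vanishes, and I must show $\chi(\ff'')/r(\ff'')>1/(n+1)$. Passing to a further rank-one quotient I may take $\ff_1''=\op(e)$; equivariantly the surjection $\bigoplus_l\op(d_l)\to\op(e)$ is given by monomials $(q_1,\dots,q_n)$ with $e=d_l+\deg q_l$ whenever $q_l\neq0$, and the vanishing of $\phi$ reads $\sum_l q_l\beta_l=0$. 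Exactly as in the endgame of Proposition~\ref{n1}, all terms are monomials of the common degree $e-c+k$ and at least two are nonzero (the surjection forces some $q_l\neq0$, and $\beta_l\neq0$, so a single term cannot cancel), whence some pair $q_i\beta_i,\,q_j\beta_j$ is equal up to scalar; this common monomial is divisible by ${\rm lcm}(\beta_i,\beta_j)$, forcing $e\ge d_i+d_j-c+k-\deg\gcd(\beta_i,\beta_j)=\deg Q_{ij}$ by \eqref{degq}. The third hypothesis gives $\deg Q_{ij}\ge0$, hence $\chi(\ff'')=e+1\ge1>1/(n+1)$. Thus the pairwise torsion-free cokernels $Q_{ij}$ control every relevant quotient, precisely as the kernels $K_{ij}$ do in Proposition~\ref{n1}; I expect verifying this reduction of an arbitrary equivariant quotient to a single pair to be the main obstacle, and once it is in place the three conditions are seen to be sufficient.
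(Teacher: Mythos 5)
Your proposal is correct and takes essentially the same route as the paper's proof: the paper likewise dualizes Proposition~\ref{n1}, splitting into the cases $\gc_0=0$ (giving $d_i\le -1$) and $\gc_0=\op(c)$, extracting the gcd condition from the torsion-free cokernels $Q_{ij}$ via Lemma~\ref{deg}, and handling an arbitrary quotient by reducing to a rank-one quotient $\op(e)$ whose projection monomials satisfy $\sum_i \beta_i q_i=0$, then finding a proportional pair $\beta_{j_1}q_{j_1},\beta_{j_2}q_{j_2}$ to bound $e\ge \deg Q_{j_1 j_2}\ge 0$. The reduction step you flagged as the main obstacle is exactly the paper's argument, and you have in fact already carried it out.
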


\begin{proof}
The proof is dual to the proof of the previous proposition.
Indecomposability implies $\beta_i$'s are nonzero. Let $\gc$ be a
subsheaf of $\ff$ whose $\pi_*\oo_Y$-module structure is $(\{\gc_0,
\gc_1\},\psi)$. If $\gc_0=0$, it is enough to check for $\gc_1\simeq
\op(d_i)$. So, we have $d_i\leq -1$.

Suppose $\gc_0=\op(c)$. Let $Q_{ij}$ be the torsion-free cokernel of
the map
\[\beta_{ij}=(\beta_i,\beta_j)^t\colon \op(c)\to (\op(d_i)\oplus\op(d_j))\otimes\op(k).\]
If $\gc_1$ is the saturation of $\bigoplus_{t\ne i,j}
\op(d_t)\oplus ({\rm im} \beta_{ij}\otimes \op(-k))$, then by \eqref{degq},
\[{\rm deg} Q_{ij} = d_i +d_j -c+k-{\rm deg}({\rm
gcd}(\beta_i,\beta_j))\ge 0,\] which is the third condition. Now,
let $\gc_1$ be an arbitrary subsheaf of $\ff_1$ containing the image
of $\phi$. We may assume $\gc_1$ is an equivariant saturated
subsheaf of rank $n-1$. Let $(q_1,\cdots,q_n)$ be the natural
projection map from $\ff_1$ to the quotient $\ff_1/\gc_1$ where
$q_i$'s are monomials. Then
\[\sum_{i=1}^n \beta_i q_i=0.\]
As in the previous proposition, we can find $j_1$ and $j_2$ such that
$\beta_{j_1}q_{j_1}$ and $\beta_{j_2}q_{j_2}$ are nonzero and
proportional. Thus,
\begin{align}
\text{deg} \ \ff_1/\gc_1 &=d_{j_1}+{\rm deg}(q_{j_1})\ge
d_{j_1}+{\rm deg}(\beta_{j_2})-{\rm
deg}({\rm gcd}(\beta_{j_1}, \beta_{j_2}))\notag\\
&= d_{j_1}+d_{j_1}-c+k-{\rm deg}({\rm gcd}(\beta_{j_1},
\beta_{j_2}))\notag\\
&= {\rm deg} \ Q_{j_1,j_2}\ge 0.\notag
\end{align}
So, it is enough to check for $Q_{ij}$.
\end{proof}

Propositions~\ref{n1} and \ref{1n} have straightforward
generalizations to types $(n,1^d)$ and $(1^d,n)$.

\begin{prop}\label{n1d}
For a sheaf $\ff$ of type $(n,1^d)$, let $\ff_0\simeq \oplus_{i=1}^n
\op(a_i)$ and $\ff_j\simeq \op(b_j)$ for $1\le j\le d$. By
$\chi(\ff)=1$, we have
\begin{equation}
\sum_{i=1}^{n} (a_i+1)+ \sum_{j=1}^d(b_j+1)=1.\end{equation} Then,
$\ff$ is stable if and only if
\begin{itemize}
  \item  all maps $\phi_j$,
$0\le j\le d$ have nonzero monomial entries,
  \item $a_i\geq 0, \ \forall i $, $\sum_{j=s}^d (b_j+1) \le 0$ for $1\le s\le
  d$,
  \item ${\rm deg}({\rm gcd}(\alpha_i,\alpha_j))\leq
  b_1-a_i-a_j+k-1$,
\end{itemize}
where $\phi_0=(\alpha_1,\cdots, \alpha_n)$.
\end{prop}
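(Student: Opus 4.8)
The plan is to mirror the proofs of Propositions~\ref{n1} and \ref{1n}: by Proposition~\ref{stab} it suffices to test the slope inequality $\chi(\gc)/r(\gc)<1/(n+d)$ against equivariant $\pi_*\oo_Y$-submodules $\gc=\{\gc_j,\psi_j\}$, recalling that $r(\ff)=n+d$ and $\chi(\ff)=1$. The basic structural observation is that, once condition~(1) holds, every connecting map is nonzero and $\phi_1,\dots,\phi_{d-1}$ are injective maps of line bundles; hence the support $\{j:\gc_j\neq 0\}$ of any submodule is upward closed, and a nonzero $\gc_0$ maps either to zero (so $\gc_0\subseteq\ker\phi_0$) or into a nonzero $\gc_1$, which then forces all higher rows to be nonzero.

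For necessity, indecomposability of a stable (hence simple) sheaf gives condition~(1): a vanishing $\phi_j$ with $j\ge 1$, or a vanishing component $\alpha_i$ of $\phi_0$, splits off a direct summand. The remaining conditions follow from three explicit submodules. Dropping any single summand $\op(a_i)$ of $\ff_0$ while keeping all higher rows yields a quotient isomorphic to $\op(a_i)$ of multiplicity one; stability forces $a_i\ge 0$. The submodule supported on rows $s,\dots,d$ with $\gc_j=\ff_j$ has multiplicity $d-s+1$ and Euler characteristic $\sum_{j=s}^d(b_j+1)$, and comparing its slope with $1/(n+d)$, using $n\ge 1$, forces $\sum_{j=s}^d(b_j+1)\le 0$. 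Finally, the kernel $K_{ij}=\ker(\alpha_i,\alpha_j)$ placed in row~$0$ with all higher rows zero is a submodule because $\phi_0$ kills it; stability together with the degree formula~\eqref{degk} gives $\deg K_{ij}\le -1$, which is exactly condition~(3).

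For sufficiency I would classify submodules $\gc$ by their row-zero part. If $\gc_0=0$ then $\chi(\gc)=\sum_{j\ge s}(\deg\gc_j+1)\le\sum_{j=s}^d(b_j+1)\le 0$, so the slope is $\le 0<1/(n+d)$. If $\gc_0\neq 0$ I split on whether $\phi_0|_{\gc_0}$ vanishes. When $\phi_0|_{\gc_0}\neq 0$ all $d$ top rows are nonzero, so $r(\gc)=r+d$ with $r={\rm rk}\,\gc_0$; since $\deg\gc_j\le b_j$ together with $\chi(\ff)=1$ and $a_i\ge 0$ gives $\chi(\gc)\le 1-(n-r)$, the elementary inequality $(1-n+r)(n+d)<r+d$, which reduces to $n+d>1$, yields the strict slope bound. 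When $\phi_0|_{\gc_0}=0$, i.e.\ $\gc_0\subseteq\ker\phi_0$, the module splits as $\gc_0\oplus\gc^{\rm top}$, so its slope is the mediant of the two and it suffices to bound each; the top part is handled as in the case $\gc_0=0$, while for $\gc_0$ I use Theorem~\ref{decomp} to split it equivariantly into line subbundles $\op(c_\ell)$ of $\ker\phi_0$ and apply the monomial-relation argument of Proposition~\ref{n1} — the inclusion $(p_1,\dots,p_n)^t$ satisfies $\sum_i p_i\alpha_i=0$, forcing two proportional terms — to conclude $c_\ell\le\deg K_{j_1j_2}\le -1$, whence $\deg\gc_0\le -r$ and the slope of $\gc_0$ is $\le 0$.

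The main obstacle is the sufficiency direction for submodules whose row-zero part $\gc_0$ has rank $\ge 2$ and interacts with the top chain: naively $\deg\gc_0$ can be positive, so $\chi(\gc)$ cannot be bounded termwise. The resolution is the dichotomy above — using $a_i\ge 0$ to pay for the complementary summands when $\gc_0$ maps forward, and using the equivariant splitting of Theorem~\ref{decomp} to reduce the kernel case to line subbundles — so that the pairwise bound $\deg K_{ij}\le -1$ of condition~(3) suffices even for higher-rank subsheaves.
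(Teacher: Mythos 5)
The paper offers no separate proof of Proposition~\ref{n1d} --- it is declared a ``straightforward generalization'' of Propositions~\ref{n1} and \ref{1n} --- and your argument is exactly that generalization carried out in full: reduction to equivariant submodules via Proposition~\ref{stab}, the same three test submodules for necessity (dropping one $\op(a_i)$, truncating to rows $s,\dots,d$, and the kernels $K_{ij}$ with $\deg K_{ij}\le -1$ via Lemma~\ref{deg}), and for sufficiency the dichotomy on $\gc_0$ together with the monomial-relation trick $\sum_i p_i\alpha_i=0$ from the proof of Proposition~\ref{n1}, upgraded to arbitrary rank by the equivariant splitting of Theorem~\ref{decomp}. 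The structural observations are sound: $\phi_1,\dots,\phi_{d-1}$ are nonzero maps of line bundles, hence injective, so the support of a submodule is upward closed; when $\gc_0=0$ this gives $\chi(\gc)\le\sum_{j=s}^d(b_j+1)\le 0$; and your use of $\chi(\gc_0)\le\sum_{i=1}^r(a_i+1)$ is the same splitting-type comparison ($a_i'\le a_i$ after sorting) that the paper uses in Proposition~\ref{n1}.

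One step fails as written. In the case $\phi_0|_{\gc_0}\neq 0$ with $r=\operatorname{rank}\gc_0$, the inequality $(1-n+r)(n+d)<r+d$ does \emph{not} ``reduce to $n+d>1$'' when $r=n$: it rearranges to $n-r<(n-r)(n+d)$, and at $r=n$ it reads $0<0$, which is false, so your argument silently omits submodules whose row-zero part has full rank $n$. The repair is one line: if $r=n$ (and all rows of $\gc$ then have full rank), properness of $\gc$ forces $\chi(\gc)\le\chi(\ff)-1=0$, since the quotient $\ff/\gc$ is a nonzero sheaf of finite length, so the slope is $\le 0<1/(n+d)$. In fact for $r<n$ your own bound $\chi(\gc)\le 1-(n-r)\le 0$ already gives slope $\le 0$ directly, so the ``elementary inequality'' is never needed: in every branch of your sufficiency analysis the submodule has nonpositive Euler characteristic, strictly below the slope $1/(n+d)$ of $\ff$. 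With that patch the proof is complete and follows precisely the route the paper intends.
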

\begin{prop}\label{1dn}
For a sheaf $\ff$ of type $(1^d,n)$, let $\ff_j\simeq \op(c_j)$ for
$0\le j\le d-1$ and $\ff_d\simeq \oplus_{i=1}^n\op(d_i)$. By
$\chi(\ff)=1$, we have\begin{equation}
\sum_{j=0}^{d-1}(c_j+1)+\sum_{i=1}^{n} (d_i+1)=1.\end{equation}
Then, $\ff$ is stable if and only if
\begin{itemize}
  \item all maps
$\phi_j$, $0\le j\le d$ have nonzero monomial entries,
  \item $d_i\leq -1, \ \forall i $, $\sum_{j=0}^s (c_j+1) \ge 1$ for $0\le s\le d-1$,
  \item ${\rm deg}({\rm gcd}(\beta_i,\beta_j))\leq
  d_i+d_j-c_{d-1}+k$,
\end{itemize}
where $\phi_{d-1}=(\beta_1,\cdots, \beta_n)^t$.
\end{prop}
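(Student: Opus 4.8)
The plan is to treat this as the several-row generalization of Proposition~\ref{1n}, glued to the chain analysis of Proposition~\ref{1d}. By Proposition~\ref{stab} it suffices to test the slope inequality against proper nonzero equivariant subsheaves $\gc=\{\gc_j,\psi_j\}$, i.e. collections $\gc_j\subseteq\ff_j$ of equivariant subsheaves with $\phi_j(\gc_j)\subseteq\gc_{j+1}\otimes\op(k)$. Since $\chi(\ff)=1$ and $r(\ff)=d+n$, every proper nonzero $\gc$ has $1\le r(\gc)\le d+n-1$, so the destabilizing inequality $\chi(\gc)/r(\gc)\ge 1/(d+n)$ is equivalent to the integral statement $\chi(\gc)\ge 1$. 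Thus $\ff$ is stable if and only if $\chi(\gc)\le 0$ for every proper nonzero equivariant subsheaf $\gc$, equivalently $\chi(\ff/\gc)\ge 1$ for every proper quotient. I would first extract the three listed conditions from a short list of test subsheaves (necessity), then show these force $\chi(\gc)\le 0$ for all $\gc$ (sufficiency).

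For necessity I would evaluate $\chi$ on three families. First, indecomposability of a stable sheaf forces every $\phi_j$, and in particular every entry $\beta_i$ of $\phi_{d-1}$, to be nonzero, since a vanishing entry splits off a line-bundle summand $\op(d_i)$ in the top row. Second, the submodule $\gc$ with $\gc_j=\ff_j$ for $j\ge s+1$ and $\gc_j=0$ otherwise has $\chi(\gc)=1-\sum_{j=0}^s(c_j+1)$, so $\chi(\gc)\le 0$ gives the chain condition $\sum_{j=0}^s(c_j+1)\ge 1$ for $0\le s\le d-1$, exactly as in Proposition~\ref{1d}; the line bundle $\op(d_i)\subset\ff_d$ placed in the top row is a submodule with $\chi=d_i+1$, so $\chi\le 0$ gives $d_i\le -1$. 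Third, for a pair $i,j$ take $\gc_{j'}=\ff_{j'}$ for $j'\le d-1$ together with the saturated rank $n-1$ subsheaf $\gc_d\subset\ff_d$ equal to the saturation of $\bigoplus_{t\ne i,j}\op(d_t)\oplus({\rm im}\,\beta_{ij}\otimes\op(-k))$, where $\beta_{ij}=(\beta_i,\beta_j)^t$; the quotient $\ff/\gc$ is the torsion-free cokernel $Q_{ij}$ sitting in the top row, and by \eqref{degq} the inequality $\chi(Q_{ij})\ge 1$ is exactly ${\rm deg}({\rm gcd}(\beta_i,\beta_j))\le d_i+d_j-c_{d-1}+k$. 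This reproduces the three conditions.

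For sufficiency I would replace an arbitrary equivariant subsheaf $\gc$ by its saturation, which can only raise $\chi$, and then analyze its shape. Because each tail map $\phi_j$ ($j\le d-2$) is a nonzero, hence injective with torsion cokernel, map of line bundles, the saturated tail part of $\gc$ is up-closed: there is an $h$ with $\gc_j=\ff_j$ for $h\le j\le d-1$, $\gc_j=0$ for $j<h$, and if $h\le d-1$ then $\gc_d$ contains the untwisted image of $\phi_{d-1}$. The tail contribution $\sum_{j=h}^{d-1}(c_j+1)$ to $\chi(\gc)$ is controlled by the chain condition, while the top contribution is controlled exactly as in Proposition~\ref{1n}: reducing $\gc_d$ or its saturated complement to a line bundle and using the monomial relation $\sum_i\beta_iq_i=0$ coming from the projection $\ff_d\to\ff_d/\gc_d$, one finds indices $j_1,j_2$ with $\beta_{j_1}q_{j_1},\beta_{j_2}q_{j_2}$ nonzero and proportional, which bounds the relevant degree below by ${\rm deg}\,Q_{j_1j_2}\ge 0$. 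Combining the tail and top estimates additively should yield $\chi(\gc)\le 0$.

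The main obstacle is precisely this sufficiency step: making the tail/top split rigorous and ruling out extra constraints from subsheaves that are simultaneously engaged in the rank-one chain and in the rank-$n$ top row. The delicate point is that once the tail of $\gc$ reaches row $d$, the induced $\gc_d$ is forced to contain the image of $\phi_{d-1}$, so the monomial/proportional-pair argument must be run on the quotient $\ff_d/\gc_d$ rather than on $\gc_d$ itself; reconciling the two extremal families, the chain quotients and the $Q_{ij}$, so that every mixed $\gc$ is dominated by one of them is the crux, after which the numerics reduce to the same integrality computation as in Propositions~\ref{1d} and~\ref{1n}.
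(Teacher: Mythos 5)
Your proof is correct and is essentially the argument the paper intends: the paper states Proposition~\ref{1dn} without proof as a ``straightforward generalization'' of Propositions~\ref{n1} and~\ref{1n}, and your analysis --- saturating $\gc$ so the rank-one chain part is up-closed and controlled by the conditions of Proposition~\ref{1d}, while the top row is handled by the proportional-monomial relation $\sum_i \beta_i q_i = 0$ bounding each quotient summand below by $\deg Q_{j_1,j_2} \ge 0$ as in Proposition~\ref{1n}, with the two estimates combining additively to give $\chi(\gc)\le 0$ --- is precisely that generalization, and the crux you flag at the end is in fact already resolved by your own outline. The only (harmless) slip is the claim $1\le r(\gc)\le d+n-1$: a proper subsheaf may have full multiplicity $d+n$, but then $\ff/\gc$ is zero-dimensional, so $\chi(\gc)\le 0$ holds automatically and your reduction of stability to $\chi(\gc)\le 0$ stands.
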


\begin{cor}\label{isolated}
All stable equivariant sheaves of type $(1^d)$, $(n,1^d)$ or
$(1^d,n)$ are isolated points in $M_d(k)^T$.
\end{cor}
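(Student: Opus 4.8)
The plan is to show that, for each of the three listed types, the locus of stable $T$-equivariant sheaves of that type is a \emph{finite} set of points. Since the type $(d_0,\dots,d_B)$ is a locally constant invariant on the fixed locus (it is read off from the equivariant Hilbert function, i.e. from the dimensions $d_j$ of the stabilized weight spaces along the $\po$-direction, which vary flatly in a family of $T$-equivariant sheaves), finiteness of each stratum immediately yields that every such sheaf is an isolated point of $M_d(k)^T$.

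First I would use Theorem~\ref{m} to replace a stable $T$-equivariant sheaf $\ff$ by its $\pi_*\oo_Y$-module data $\{\ff_j,\phi_j\}$, taken up to the equivalence provided by equivariant isomorphisms $\mu_j\colon\ff_j\to\gc_j$. The discrete part of this data is controlled: the degrees of the line-bundle summands of each $\ff_j$ are constrained by $\chi(\ff)=1$ together with the stability inequalities of Propositions~\ref{1d}, \ref{n1d} and \ref{1dn}, leaving only finitely many admissible degree sequences. Next, with the equivariant structures on the $\ff_j$ fixed, every matrix entry of $\phi_j$ is a $\cc^*$-equivariant homomorphism between equivariant line bundles and is therefore a scalar multiple of a single monomial in $x,y$ whose exponent is pinned down by the weights. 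Since there are only finitely many monomials of each prescribed degree, the only potentially continuous parameters are the scalar coefficients attached to these monomials.

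It remains to remove the coefficients using the diagonal automorphisms $\mu_j$, and this is the heart of the matter. I would encode the situation by the graph $G$ whose vertices are the line-bundle summands occurring in all of the $\ff_j$ and whose edges are the nonzero monomial entries of the maps $\phi_j$. For type $(1^d)$ the graph $G$ is a path; for $(n,1^d)$ and $(1^d,n)$ it is a single summand-rich vertex joined to a path, and stability (Propositions~\ref{n1d}, \ref{1dn}) forces all of the relevant entries to be nonzero, so $G$ is connected. In every one of the three cases $G$ is therefore a \emph{tree}. A diagonal automorphism rescales the coefficient on an edge by the ratio of the scalars at its two endpoints, so the induced action on the torus of (nonzero) coefficients is governed by the incidence matrix of $G$; for a tree on $V$ vertices this matrix has rank $V-1$, exactly the number of edges, and the automorphism torus (modulo the global scalar that acts trivially) acts simply transitively. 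Hence every coefficient can be normalized to $1$, and the isomorphism class of $\ff$ is uniquely determined by its discrete data.

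The main obstacle is precisely this last step: one must verify that no continuous modulus survives the automorphism action. The tree shape of $G$ is what guarantees this, and it is special to the three types in question, since there is at most one summand-rich row and it sits at an end of the chain, so $G$ can contain no cycle. Were two summand-rich rows adjacent, $G$ would acquire a cycle, the edge torus would have strictly larger dimension than the automorphism torus, and a genuine cross-ratio-type modulus would appear. Combining the three steps, each listed type contributes only finitely many isomorphism classes, so the corresponding stratum of $M_d(k)^T$ is zero-dimensional and all of its points are isolated.
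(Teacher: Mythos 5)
Your proposal is correct and follows essentially the same route as the paper, whose entire proof is the observation that scaling automorphisms allow all monomial coefficients to be set to $1$; your tree/incidence-matrix analysis of the action of the diagonal automorphism torus on the coefficient torus is a careful justification of exactly that one-line argument. The extra step you include --- that the type is locally constant so finiteness of each stratum implies isolation in $M_d(k)^T$ --- is left implicit in the paper but is consistent with it.
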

\begin{proof}
By scaling automorphisms in each case, we can set the coefficients
of monomials to be 1. So, equivariant sheaves of these types are
isolated.
\end{proof}
\begin{cor}\label{1nn1}
For any $k\ge -1$,
\[N_{(1,n)}(k)=N_{(n,1)}(k+n-1)\]
\end{cor}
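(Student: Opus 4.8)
The plan is to convert the equality of topological Euler characteristics into an explicit bijection of stable equivariant sheaves, reading the correspondence directly off the defining data in Propositions~\ref{n1} and~\ref{1n}. By Corollary~\ref{isolated} the fixed loci parametrizing type $(1,n)$ and type $(n,1)$ sheaves are finite sets of reduced isolated points, so $N_{(1,n)}(k)$ and $N_{(n,1)}(k+n-1)$ simply count isomorphism classes of the corresponding stable equivariant sheaves. It therefore suffices to exhibit a bijection between stable type $(1,n)$ sheaves on $Y$ over $\op(k)$ and stable type $(n,1)$ sheaves over $\op(k+n-1)$.

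For the map itself, I would take a type $(1,n)$ sheaf $\ff_0\simeq\op(c)$, $\ff_1\simeq\bigoplus_{i=1}^n\op(d_i)$, $\phi=(\beta_1,\dots,\beta_n)^t$ with each $\beta_i$ a monomial of degree $d_i-c+k$, and assign to it the type $(n,1)$ datum $\ff_0'\simeq\bigoplus_{i=1}^n\op(a_i)$ with $a_i=-d_i-1$, $\ff_1'\simeq\op(b)$ with $b=\sum_l d_l$, and $\phi'=(\alpha_1,\dots,\alpha_n)$ with $\alpha_i=\beta_i$. Writing $k'=k+n-1$, the Euler characteristic constraint gives $c=\sum_l a_l=-b-n$, and the identity $k'-k=-b-c-1=n-1$ is exactly what forces the shift: it makes the degree $b-a_i+k'$ of $\alpha_i$ equal to the degree $d_i-c+k$ of $\beta_i$, so that $\alpha_i=\beta_i$ is a legitimate map of the correct degree in the $(n,1)$ picture.

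I would then check the three stability conditions match termwise. The normalization $\sum_i(a_i+1)+(b+1)=1$ follows from $(c+1)+\sum_i(d_i+1)=1$; the sign conditions correspond since $a_i\ge 0$ is equivalent to $d_i\le -1$; and since $\alpha_i=\beta_i$ the gcd's coincide, while both stability bounds $b-a_i-a_j+k'-1$ and $d_i+d_j-c+k$ reduce to $\sum_l d_l+d_i+d_j+k+n$. The construction is manifestly invertible, with inverse $d_i=-a_i-1$, $c=\sum_l a_l$, $\beta_i=\alpha_i$, so we obtain the claimed equality $N_{(1,n)}(k)=N_{(n,1)}(k+n-1)$.

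The verifications are elementary, so the only genuine care is needed in two places. First, one must track the two gradings $M^1$ and $M^2$ used in the two charts so that the degree of each monomial map is computed correctly; this is what makes the bookkeeping behind the degree identity above correct rather than merely suggestive. Second, one must confirm that the correspondence descends to isomorphism classes in the sense of Theorem~\ref{m}, i.e.\ that the scaling and permutation automorphisms of the rank-$n$ summand on the $(1,n)$ side match those on the $(n,1)$ side under $\alpha_i=\beta_i$, $a_i=-d_i-1$. Both points are routine once the substitution is fixed; locating that substitution, and with it the shift $n-1$, is the one step that is not purely mechanical.
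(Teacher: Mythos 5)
Your proposal is correct and is essentially the paper's own proof: you use the same substitution $a_i=-1-d_i$, $\alpha_i=\beta_i$ with the shift $k\mapsto k+n-1$, and your choice $b=\sum_l d_l$ agrees with the paper's $b=-n-c$ via the Euler characteristic constraint \eqref{chi1n}. The extra remarks on isolatedness (Corollary~\ref{isolated}) and matching isomorphism classes are sound elaborations of what the paper leaves implicit.
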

\begin{proof}
For a given $c$ and $d_j$'s as in Proposition~\ref{1n}, we let
$a_j=-1-d_j$ and $b=-n-c$ and $\alpha_j=\beta_j$. Note that
$\text{deg}(\beta_j)=d_j-c+k=b-a_j+(k+n-1)$ as required. Moreover,
$b-a_i-a_j+((k+n-1)-1)=d_i+d_j-c+k$, and equation \eqref{chin1}
for the Euler characteristic is equivalent to \eqref{chi1n}. So,
$a_j$, $b$, and $\alpha_j$ so defined will determine a stable sheaf in
$M_{(n,1)}^T(k+n-1)$. Hence, this gives a bijection between
$M_{(1,n)}^T(k)$ and $M_{(n,1)}^T(k+n-1)$.
\end{proof}

\section{The Calculation of BPS invariants}\label{bpscounting}
In this section, we compute the local BPS invariants when $d=1,2,3$,
and $4$. Conjecture~\ref{GV} combined with the
Gromov-Witten theory \cite{localgw} predicts that
\begin{align}
&n_1(k)=(-1)^{k+1},\label{eq:n1} \\
&n_2(k)=
\begin{cases}
 -\frac{k(k+2)}{4} & \text{if $k$ is even,} \\
 -\frac{(k+1)^2}{4} & \text{if $k$ is odd,}
\end{cases}\\
&n_3(k)=(-1)^{k+1}\frac{k(k+1)^2(k+2)}{6}.\\
&n_4(k)=-\frac{k(k+1)^2(k+2)(2k^2+4k+1)}{12}. \label{n4}
\end{align}

The genus zero Gromov-Witten invariants are obtained by expanding the formula in \cite[Corollary 7.2]{localgw} with $g=0$. Then, we obtain \eqref{eq:n1}-\eqref{n4} inductively by applying \eqref{gv}. In Section~\ref{residue}, we will show that 
\begin{equation}\label{sign}
n_d(k)=(-1)^{kd^2+1}e_{\rm top}(M_d(k)).
\end{equation} Hence, the signs are correct and it is enough to compute $N_d(k)=e_{\rm top}(M_d(k))$. By the localization formula \eqref{localization}, we compute $N_d(k)$ by counting $T$-equivariant sheaves.

\subsection{$d=1$}
By Corollary~\ref{count1}, it is easy to see that $N_1(k)=1$. We can
see this more directly. Let $\ff$ be a stable sheaf with Hilbert
polynomial $n+1$ whose support is $\po$. Then $\ff$ has a section,
or a nonzero morphism $\oo_\po \to \ff$. Since $\oo_\po$ is stable
with the Hilbert polynomial $n+1$, this morphism is an isomorphism.
Hence $$ M_1^T(k)=\{\oo_\po\}.$$ Hence, we have
\[ N_1(k)=1.\]

\subsection{$d=2$}
Only sheaves of type $(1,1)$ appear. By Corollary~\ref{count1},
\[N_2(k)=\sum_{\lambda_1\ge \lambda_0} (\lambda_1- \lambda_0+1)\]
where the sum is over all partitions $\lambda_1+ \lambda_0=k-1$.
Therefore,
\[
N_2(k)=\sum_{\lambda_0=0}^{\lfloor\frac{k-1}{2}\rfloor}
(k-2\lambda_0)=
\begin{cases}
\frac{k(k+2)}{4} & \text{if $k$ is even.} \\
\frac{(k+1)^2}{4} & \text{if $k$ is odd.}
\end{cases}
\]

\subsection{$d=3$}
In this case, sheaves of type $(1,1,1)$, $(2,1)$, and $(1,2)$ appear.
By Corollary~\ref{1nn1},
\begin{equation}\label{n12}
N_{(1,2)}(k)=N_{(2,1)}(k+1).\end{equation} We start with the type
$(2,1)$.

To count the $T$-equivariant sheaves of type (2,1), we let
\[S_{(2,1)}(k)=\left\{(a_1,a_2,b)\in \zz^3\colon \begin{array}{l}
a_1+a_2+b=-2,\\
0\le a_1\le b+k, ~ 0\le a_2\le b+k \end{array}\right\}.\]

For $(a_1,a_2,b)\in S_{(2,1)}(k) $, we count pairs
$(\alpha_1,\alpha_2)$ of nonzero monomials with no common factor of
degree greater than $b-a_1-a_2+k-1=2b+k+1$.
\begin{defn} For $r< {\rm min}(n,m)$, define
\[P_{(n,m,r)}=\left\{(v,w)\colon \begin{array}{l}v,w \text{ monomials
in } x\text{ and }y\\
{\rm deg} v=n,{\rm deg} w=m, {\rm deg}({\rm gcd}(v,w))\leq
r\end{array}\right\}.\]
\end{defn}

\begin{lem}\label{f}
$|P_{(n,m,r)}|=\begin{cases}(r+1)(r+2)& \text{if }0 \le r< {\rm
min}(n,m)\\0& \text{if }r<0\end{cases}$
\end{lem}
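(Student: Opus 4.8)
The plan is to reduce the count to a lattice-point problem in a rectangle and then split the rectangle into regions on which the gcd condition becomes linear. First I would write every monomial of degree $n$ in $x$ and $y$ as $x^i y^{n-i}$ with $0\le i\le n$, and every monomial of degree $m$ as $x^j y^{m-j}$ with $0\le j\le m$. The observation driving everything is that
\[ {\rm gcd}(x^i y^{n-i},\, x^j y^{m-j}) = x^{{\rm min}(i,j)}\, y^{{\rm min}(n-i,\,m-j)}, \]
so that ${\rm deg}({\rm gcd}) = {\rm min}(i,j)+{\rm min}(n-i,m-j)$. Hence $|P_{(n,m,r)}|$ is exactly the number of integer points $(i,j)$ in $[0,n]\times[0,m]$ with ${\rm min}(i,j)+{\rm min}(n-i,m-j)\le r$. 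The case $r<0$ is then immediate, since the degree of a gcd is never negative.

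Next I would partition the rectangle into four regions according to whether $i\le j$ or $i>j$, and whether $n-i\le m-j$ or $n-i>m-j$. On the region $\{i\le j,\ n-i\le m-j\}$ the left-hand side equals $i+(n-i)=n$, and on $\{i>j,\ n-i>m-j\}$ it equals $j+(m-j)=m$; since the hypothesis $r<{\rm min}(n,m)$ forbids both $n\le r$ and $m\le r$, neither of these regions contributes. On the two surviving regions the inequality becomes linear: in $\{i\le j,\ n-i>m-j\}$ it reads $j-i\ge m-r$, and in $\{i>j,\ n-i\le m-j\}$ it reads $i-j\ge n-r$.

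Finally I would count lattice points in each surviving region. Setting $t=j-i$ in the first, the constraints collapse to $m-r\le t\le m$ with $0\le i\le m-t$, so there are $m-t+1$ admissible $i$ for each $t$; reindexing by $u=m-t$ gives $\sum_{u=0}^{r}(u+1)=\tfrac{(r+1)(r+2)}{2}$. The second region is handled identically (with $s=i-j$), or by the $v\leftrightarrow w$ symmetry that interchanges the two regions while swapping $n\leftrightarrow m$, and yields the same total; adding the two gives $(r+1)(r+2)$. The one point requiring care—and the step I expect to be the main obstacle—is verifying that, under $r<{\rm min}(n,m)$, each binding inequality ($j-i\ge m-r$, resp.\ $i-j\ge n-r$) by itself forces the point into the correct region: from $r<m$ one gets $m-r\ge 1$, so $i<j$, and from $r<n$ one gets $m-r>m-n$, so $n-i>m-j$ (and symmetrically for the other region). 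Since the two regions are disjoint, this guarantees that summing the two counts neither omits nor double-counts any lattice point, which is exactly what makes the region-by-region tally valid.
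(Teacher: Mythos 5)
Your proof is correct, but it takes a genuinely different route from the paper's. The paper argues directly on the structure of the gcd: fixing $g={\rm gcd}(v,w)$ of degree $d\le r$, it observes that $v/g$ and $w/g$ are coprime monomials of positive degrees $n-d$ and $m-d$ (positivity is where $r<{\rm min}(n,m)$ enters), hence one is a pure power of $x$ and the other a pure power of $y$; this gives exactly two pairs for each of the $d+1$ choices of $g$, and summing $2\sum_{d=0}^{r}(d+1)$ yields $(r+1)(r+2)$. You instead translate the problem into counting lattice points $(i,j)\in[0,n]\times[0,m]$ with ${\rm min}(i,j)+{\rm min}(n-i,m-j)\le r$, kill two of the four min-regions using $r<{\rm min}(n,m)$, and count the two surviving half-planes $j-i\ge m-r$ and $i-j\ge n-r$, checking carefully that each linear inequality by itself places the point in the correct region so the two counts are disjoint and exhaustive. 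Both arguments are complete: the paper's is shorter and more conceptual, effectively stratifying $P_{(n,m,r)}$ by the exact degree of the gcd and exhibiting the $2(d+1)$ pairs in each stratum, which makes the formula's origin transparent; yours is more mechanical but entirely self-contained, reduces everything to explicit inequalities, and makes the role of the hypothesis $r<{\rm min}(n,m)$ visible as the exclusion of the two degenerate regions where the gcd degree is forced to equal $n$ or $m$ --- the same place where the paper's proof silently uses $n-d>0$ and $m-d>0$ to ensure its two listed pairs are distinct and exhaust all possibilities.
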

\begin{proof}
For $(v,w)\in P_{(n,m,r)}$, let $g$ be ${\rm gcd}(v,w)$ and $d$ be
its degree. Then $(v,w)$ is either $(x^{n-d}g, y^{m-d}g)$ or
$(y^{n-d}g, x^{m-d}g)$. Since there are $d+1$ choices for $g$,
$|P_{(n,m,r)}|$ is $2\sum_{d=0}^r(d+1)=(r+1)(r+2)$. \end{proof}

 Note that if $a_1=a_2=a$, switching two factors
of $\ff_0=\op(a)\oplus\op(a)$ gives an isomorphism between two
sheaves determined by $(\alpha_1,\alpha_2)$ and
$(\alpha_2,\alpha_1)$. So, we must count half of such pairs
$(\alpha_1,\alpha_2)$ if the degree of $\alpha_1$ and $\alpha_2$ are
the same.

We let
\[f(n,m,r)=\begin{cases}|P_{(n,m,r)}|& \text{if }n\neq m \\
\frac{1}{2}|P_{(n,m,r)}|& \text{if }n= m
\end{cases}\]

Then, the total number of $T$-fixed sheaves of type $(2,1)$ is
\begin{equation}\label{eq:n21}
N_{(2,1)}(k)=\sum_{(a_1,a_2,b)\in
S_{(2,1)}(k)}f(b-a_1+k,b-a_2+k,2b+k+1).
\end{equation}

\begin{lem}\label{n21}
If $k\ge 1$,
\begin{equation*}
N_{(2,1)}(k)=\sum_{b=\lceil-\frac{k+1}{2}\rceil}^{-1} \lfloor -\frac{b+1}{2}\rfloor (k+2b+2)(k+2b+3)+ \frac{1}{2}\sum_{a=0}^{\lfloor\frac{k-3}{4}\rfloor} (k-4a-2)(k-4a-1)
\end{equation*}
\end{lem}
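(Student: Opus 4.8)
The plan is to evaluate the finite sum \eqref{eq:n21} directly, by organizing the lattice points of $S_{(2,1)}(k)$ according to the value of $b$ and then splitting off the ``diagonal'' points $a_1=a_2$ from the rest.

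First I would pin down the effective range of summation. For a triple in $S_{(2,1)}(k)$ the relations $a_1,a_2\ge 0$ and $a_1+a_2=-2-b$ force $b\le -2$, and by Lemma~\ref{f} the summand $f$ vanishes unless $r=2b+k+1\ge 0$, i.e. $b\ge \lceil -\frac{k+1}{2}\rceil$. Within this range I would check that the upper bounds $a_i\le b+k$ in the definition of $S_{(2,1)}(k)$ are automatic, since $a_i\le a_1+a_2=-2-b\le b+k$ is equivalent to $r\ge 0$. Consequently, for each admissible $b$ the admissible pairs $(a_1,a_2)$ are exactly the nonnegative integer solutions of $a_1+a_2=s$, where $s:=-2-b$.

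Next I would evaluate the summand. A short inequality shows $r<\min(b-a_1+k,\,b-a_2+k)$ always holds (it reduces to $\max(a_1,a_2)\le s$, which is automatic), so Lemma~\ref{f} gives $|P|=(r+1)(r+2)=(k+2b+2)(k+2b+3)$ for every admissible pair. Thus $f$ equals $(k+2b+2)(k+2b+3)$ off the diagonal and half of that on the diagonal. Since the isomorphism $\op(a_1)\oplus\op(a_2)\cong\op(a_2)\oplus\op(a_1)$ identifies the two orderings of a pair with $a_1\ne a_2$, I count each such pair once; the number of unordered pairs with $a_1\ne a_2$ and $a_1+a_2=s$ is $\lfloor\frac{s+1}{2}\rfloor=\lfloor -\frac{b+1}{2}\rfloor$, while there is a single diagonal pair precisely when $s$ is even. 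This identifies the contribution of a fixed $b$ as $\lfloor -\frac{b+1}{2}\rfloor(k+2b+2)(k+2b+3)$ together with, when $s$ is even, the corresponding half-term.

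Finally I would reassemble the two groups of terms. Summing the off-diagonal contributions over $\lceil -\frac{k+1}{2}\rceil\le b\le -1$ (the $b=-1$ term being $0$) produces the first sum in the statement verbatim. For the diagonal terms I substitute $a=a_1=a_2$, so $s=2a$, $b=-2-2a$ and $r=k-4a-3$; then $\frac12(r+1)(r+2)=\frac12(k-4a-2)(k-4a-1)$, and the constraint $r\ge 0$ together with $a\ge 0$ gives $0\le a\le \lfloor\frac{k-3}{4}\rfloor$, which is exactly the second sum. The main obstacle is purely combinatorial bookkeeping: one must be careful not to double-count the off-diagonal configurations under the swap symmetry (this is the source of the floor $\lfloor -\frac{b+1}{2}\rfloor$ rather than a naive count), and must track the parity of $s$ and the index change $b\mapsto a$ so that the summation ranges and floor functions match the stated formula precisely; the geometric input (stability giving the $\gcd$ bound, and Lemma~\ref{f} evaluating $|P|$) is already in hand.
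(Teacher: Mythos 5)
Your proposal is correct and follows essentially the same route as the paper's proof: fix $b$ in the range forced by $r=2b+k+1\ge 0$, count unordered off-diagonal pairs $(a_1,a_2)$ with $a_1+a_2=-2-b$ (there are $\lfloor-\frac{b+1}{2}\rfloor$ of them), apply Lemma~\ref{f}, and handle the diagonal $a_1=a_2=a$ separately with the $\frac12$ factor via the substitution $b=-2-2a$. You in fact supply more detail than the paper (e.g.\ verifying $r<\min(n,m)$ and that the bounds $a_i\le b+k$ are automatic when $r\ge 0$); the only nitpick is that $-2-b\le b+k$ is equivalent to $r\ge -1$, not to $r\ge 0$, but since it is implied by $r\ge 0$ your argument is unaffected.
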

\begin{proof}
Each sum corresponds to the case $a_1>a_2$ and $a_1=a_2$
respectively. Note that in \eqref{eq:n21}, $f$ has $\frac{1}{2}$
factor if and only if $a_1=a_2$.

First, we count the case $a_1>a_2$. From equation \eqref{eq:n21},
since $r=2b+k+1\ge 0$, $-\frac{k+1}{2}\leq b\leq -1$. For each $b$,
we can check there are $\lfloor -\frac{b+1}{2}\rfloor$ pairs of
$(a_1,a_2)$ with $a_1>a_2$ satisfying all the required conditions.
By Lemma~\ref{f}, this verifies the first sum.

If $a_1=a_2=a$, then $b=-2-2a \geq -\frac{k+1}{2}$. So, $0\leq a\leq
\frac{k-3}{4}$. Thus by \eqref{eq:n21} and Lemma~\ref{f}, we obtain the
second sum.
\end{proof}

Now, to count sheaves of type $(1,1,1)$, let
\[S_{(1,1,1)}(k)=\left\{(\lambda_0,\lambda_1,\lambda_2)\in \zz^3\colon
\sum_{i=1}^3\lambda_i=3k-2, 0\le
\lambda_0\le\lambda_1\le\lambda_2\le 2k-1\right\}\]

Then, by Corollary~\ref{count1},
\begin{equation}\label{eq:n111}
N_{(1,1,1)}(k)=\sum_{(\lambda_0,\lambda_1,\lambda_2)\in
S_{(1,1,1)}(k)}(\lambda_2-\lambda_1+1)(\lambda_1-\lambda_0+1).
\end{equation}

\begin{thm}
\begin{equation}\label{eq:nk}
N_3(k)=\frac{k(k+1)^2(k+2)}{6}
\end{equation}
\end{thm}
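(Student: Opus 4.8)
The plan is to evaluate $N_3(k)$ through the localization decomposition \eqref{localization}, which for $d=3$ reads $N_3(k)=N_{(1,1,1)}(k)+N_{(2,1)}(k)+N_{(1,2)}(k)$, and to compute each of the three contributions in closed form. By Corollary~\ref{1nn1} the last term satisfies $N_{(1,2)}(k)=N_{(2,1)}(k+1)$, so the whole computation reduces to the single-type count $N_{(2,1)}$, evaluated at $k$ and at $k+1$, together with the triple-line count $N_{(1,1,1)}(k)$.

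For $N_{(1,1,1)}(k)$ I would start from the explicit sum \eqref{eq:n111} and pass to the difference variables $u=\lambda_1-\lambda_0$ and $v=\lambda_2-\lambda_1$, keeping $\lambda_0$ as the remaining free index. The defining conditions $\sum_i\lambda_i=3k-2$ and $0\le\lambda_0\le\lambda_1\le\lambda_2\le 2k-1$ then become $u\ge 0$, $v\ge 0$, $3\lambda_0+2u\le 3k-2$ (from $v\ge 0$), and $2\lambda_0+u\ge k-1$ (from $\lambda_2\le 2k-1$), while the summand becomes $(u+1)(v+1)$ with $v$ eliminated via $v=3k-2-3\lambda_0-2u$. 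This is the sum of a quadratic over a lattice polygon, and I would carry it out by first summing the polynomial $(u+1)(3k-1-3\lambda_0-2u)$ over $u$ between its two linear bounds (which depend on $\lambda_0$), and then summing the resulting polynomial in $\lambda_0$.

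For $N_{(2,1)}$ I would invoke Lemma~\ref{n21} directly, evaluating the two sums there as functions of $k$. Because of the floor and ceiling symbols, this forces a short case analysis: the first sum depends on $k$ modulo $2$ and the second on $k$ modulo $4$, each residue class producing a polynomial in $k$. I would record $N_{(2,1)}(k)$ and $N_{(2,1)}(k+1)$ in each case.

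Finally I would add the three pieces. The expected outcome is that the parity-dependent corrections appearing separately in $N_{(1,1,1)}(k)$, $N_{(2,1)}(k)$, and $N_{(2,1)}(k+1)$ cancel, leaving the single polynomial $\frac{k(k+1)^2(k+2)}{6}$ valid for all admissible $k$. I expect the main obstacle to be precisely this bookkeeping: managing the floor/ceiling case analysis in $N_{(2,1)}$ and checking that, after the shift $k\mapsto k+1$ in $N_{(1,2)}$, the residue-dependent terms combine with those of $N_{(1,1,1)}$ to produce a case-free answer; the individual summations themselves are routine polynomial evaluations.
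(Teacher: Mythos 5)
Your plan is correct, and it would prove the theorem, but by a genuinely different route than the paper. The paper does not evaluate any of the type counts in closed form: instead it computes the first difference $N_3(k)-N_3(k-1)$, observing that the shift $(\lambda_0,\lambda_1,\lambda_2)\mapsto(\lambda_0+1,\lambda_1+1,\lambda_2+1)$ injects $S_{(1,1,1)}(k-1)$ into $S_{(1,1,1)}(k)$ while preserving the summand of \eqref{eq:n111}, so the two-dimensional lattice sums cancel and only the one-dimensional boundary sums (with $\lambda_0=0$ or $\lambda_2=2k-1$) survive; combining these with Lemma~\ref{n21} and \eqref{n12}, a case check mod $4$ gives $N_3(k)-N_3(k-1)=\frac{k(k+1)(2k+1)}{3}$, and the closed form follows by induction from small base cases. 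Your approach instead evaluates all three pieces directly: your change of variables for $N_{(1,1,1)}(k)$ is set up correctly (the constraints $3\lambda_0+2u\le 3k-2$ and $2\lambda_0+u\ge k-1$ and the summand $(u+1)(3k-1-3\lambda_0-2u)$ all check out), and Lemma~\ref{n21} handles the other two terms. What the paper's telescoping buys is that it never touches the full double sum, so the only quasi-polynomial bookkeeping is the mod-$4$ analysis of Lemma~\ref{n21}; what your route buys is explicit closed-form quasi-polynomials for each individual type count, with no induction, at the cost of noticeably heavier case analysis than you may anticipate — the upper limit of your $u$-sum is $\lfloor(3k-2-3\lambda_0)/2\rfloor$, so a parity split in $k-\lambda_0$ enters \emph{inside} the $(1,1,1)$ summation, on top of the mod-$2$/mod-$4$ splits from Lemma~\ref{n21} at $k$ and $k+1$. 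Two small points to include for completeness: the decomposition \eqref{localization} for $d=3$ a priori also has a type $(3)$ term, which vanishes because such a sheaf is a direct sum of line bundles (Theorem~\ref{decomp}) and hence unstable; and Lemma~\ref{n21} is stated only for $k\ge 1$, so the cases $k=-1,0$ need the (trivial) direct observation that all counts vanish, playing the same role as the base cases in the paper's induction.
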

\begin{proof}
We compute $N_3(k)-N_3(k-1)$ and prove \eqref{eq:nk} by induction.
It remains to count type (1,1,1) sheaves.

The map
\[(\lambda_0,\lambda_1,\lambda_2)\mapsto (\lambda_0+1,\lambda_1+1,\lambda_2+1)\] gives an injection from
$S_{(1,1,1)}(k-1)$ to $S_{(1,1,1)}(k)$. Since the summand in
\eqref{eq:n111} does not change under this map, the corresponding
terms cancel each other in $N_3(k)-N_3(k-1)$.

The remaining terms in $ N_3(k)$ are for $\lambda_0=0$ or
$\lambda_2=2k-1$. We claim that
\[
N_{(1,1,1)}(k)-N_{(1,1,1)}(k-1)
=\sum_{\lambda_1=k-1}^{\lfloor\frac{3k-2}{2}\rfloor}(3k-2\lambda_1-1)(\lambda_1+1)
+\sum_{\lambda_0=1}^{\lfloor\frac{k-1}{2}\rfloor}(\lambda_0+k+1)(k-2\lambda_0).
\]

If $\lambda_0=0$, then we must have $\lambda_1+\lambda_2=3k-2$,
$\lambda_2\le 2k-1$. So, $\lambda_2=3k-2-\lambda_1$ and $k-1\leq
\lambda_1\leq \frac{3k-2}{2}$. Hence we have the first term.

If $\lambda_0\neq 0$ and $\lambda_2=2k-1$, we must have
$\lambda_0+\lambda_1=k-1$, and $\lambda_0>0$. So, $1\leq
\lambda_0\leq \frac{k-1}{2}$, which verifies the second term.

Now, using Lemma~\ref{n21} and \eqref{n12}, we can check case by
case ($k$ mod 4) that
\[N_3(k)-N_3(k-1)=\frac{k(k+1)(2k+1)}{3}.\] Since it is easy to verify
\eqref{eq:nk} for small values of $k$, this proves the theorem.
\end{proof}
\begin{cor}
Conjecture~\ref{GV} holds for $d=1,2,$ and $3$.
\end{cor}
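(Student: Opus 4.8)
The plan is to recognize that the quantities $n_1(k),n_2(k),n_3(k)$ displayed in \eqref{eq:n1} and the two lines that follow it were produced precisely by inverting the genus zero Gopakumar--Vafa relation \eqref{gv} on the local Gromov--Witten invariants $N_d^{\rm GW}(k)$ of \cite{localgw}. In other words, these predicted numbers already satisfy the relation asserted in Conjecture~\ref{GV} with $N_d^{\rm GW}(k)$, by construction, for every $d\le 3$. Hence proving Conjecture~\ref{GV} for $d=1,2,3$ reduces to the single task of showing that the BPS invariant defined intrinsically by the residue integral in Definition~\ref{bps} agrees, for each such $d$, with the corresponding predicted value; nothing about the Gromov--Witten side needs to be revisited.

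To carry this out I would first pass through the identity \eqref{sign}, namely $n_d(k)=(-1)^{kd^2+1}N_d(k)$ with $N_d(k)=e_{\rm top}(M_d(k))$, which rests on the smoothness of $M_d(k)$ established in Section~\ref{residue}. The Euler characteristics $N_d(k)$ have already been evaluated above through the localization formula \eqref{localization}, so only a sign reconciliation remains. For $d=1$ we have $N_1(k)=1$ and $kd^2+1=k+1$, giving $n_1(k)=(-1)^{k+1}$. For $d=2$ the exponent is $4k+1$, which is odd for every $k$, so $n_2(k)=-N_2(k)$; multiplying the piecewise value of $N_2(k)$ by $-1$ reproduces the predicted $n_2(k)$. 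For $d=3$ the exponent is $9k+1$, and since $9k\equiv k\pmod 2$ we have $(-1)^{9k+1}=(-1)^{k+1}$, so by \eqref{eq:nk} we obtain $n_3(k)=(-1)^{k+1}\frac{k(k+1)^2(k+2)}{6}$, which is exactly the predicted value. In each case the residue integral matches the prediction, and therefore the Gopakumar--Vafa relation holds for $d=1,2,3$.

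The corollary is thus a short bookkeeping step; the substance has been front-loaded into the two results it invokes. The heavier of these is the determination of $N_3(k)$, which required the inductive comparison of $N_3(k)$ with $N_3(k-1)$ and a case analysis modulo $4$ drawing on Lemma~\ref{n21} and the duality \eqref{n12}. The other is the identity \eqref{sign} itself, whose proof depends on the smoothness and symmetric obstruction theory of $M_d(k)$ deferred to Section~\ref{residue}. Given both, the only genuinely delicate point in the corollary is the parity of the exponent $kd^2+1$, which I would handle exactly as above; everything else is immediate.
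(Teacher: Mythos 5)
Your proposal is correct and matches the paper's (implicit) argument: the paper likewise obtains \eqref{eq:n1}--\eqref{n4} by inverting \eqref{gv} on the Gromov--Witten invariants of \cite{localgw}, reduces the conjecture to the equalities $n_d(k)=(-1)^{kd^2+1}N_d(k)$ via \eqref{sign}, and invokes the computed values $N_1(k)=1$, $N_2(k)$, and $N_3(k)=\frac{k(k+1)^2(k+2)}{6}$. Your explicit parity check of the exponent $kd^2+1$ in each case is exactly the bookkeeping the paper leaves tacit.
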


\subsection{$d=4$}\label{degree4}
Types $(1,1,1,1)$, $(3,1)$, $(1,3)$, $(2,1,1)$ and
$(1,1,2)$ are treated in Section~\ref{classification}. The remaining
types are $(1,2,1)$ and $(2,2)$. In these types, positive-dimensional torus fixed loci can occur.

\begin{ex}\label{nonisolated}
We give an example of a positive-dimensional $T$-fixed locus in
degree 4 of type $(1,2,1)$ when $k=2$.

Let $\ff_0=\op$, $\ff_1=\op(-1)\oplus \op(-1)$ and $\ff_2=\op(-1)$.
The $\pi_*\oo_Y$-module structure is
\[\phi_0 =
\left(
  \begin{array}{c} x\\y\\
  \end{array}
\right) \colon \op\to (\op(-1)\oplus \op(-1))\otimes \op(2),\]
\[\phi_1=\left(
  \begin{array}{cc} c_1y^2 & c_2xy \\
  \end{array}
\right) \colon \op(-1)\oplus \op(-1)\to \op(-1)\otimes \op(2),\]
where $c_1$ and $c_2$ are in $\cc$. It is easy to see that $\phi_0$ and $\phi_1$ are $\cc^*$-equivariant map as in Theorem \ref{m}. As only scaling isomorphisms are allowed, we cannot set all coefficients to be 1 using isomorphisms.

Let $\ff(c_1,c_2)$ be such
a sheaf. By Proposition \ref{121}, one can see that $\ff(c_1,c_2)$ is stable if $(c_1,c_2)\ne (0,0)$. Since we have $\ff(c_1,c_2)\simeq\ff(\lambda c_1,\lambda c_2)$ for $\lambda \in
\cc^*$, this $T$-fixed locus is isomorphic to $\po$.
\end{ex}

Let the \mo structure of a sheaf $\ff$ of type (1,2,1) be
\begin{align}
\phi_0=(\alpha_1, \alpha_2)^t&\colon\op(a)\to(\op(b_1)\oplus \op(b_2))\otimes \op(k) \label{module121}\\
\phi_1=(\beta_1, \beta_2)&\colon\op(b_1)\oplus \op(b_2) \to\op(c)\otimes \op(k),\notag
\end{align}
where $\alpha_i$ and $\beta_i$ are monomials with coefficient 1.

\begin{prop}\label{121}
Without loss of generality, assume $b_1\ge b_2$. The data \eqref{module121} define a stable equivariant sheaf $\ff$ with $\chi(\ff)=1$ if and only if:
\begin{enumerate}
  \item $\alpha_1\beta_2$ and $\alpha_2\beta_1$ are proportional.
  \item $a+b_1+b_2+c=-3$.
  \item No more than one of $\alpha_1,\alpha_2,\beta_1,$ or $\beta_2$ is zero.
  \item $c\le -1$, $a\ge 0$, $b_1+c \le -2$.
  \item ${\rm deg}({\rm gcd}(\alpha_1,\alpha_2))\le b_1 +b_2-
  a+k$,\\
${\rm deg}({\rm gcd}(\beta_1,\beta_2))\leq c+k-b_1-b_2-1$.
  \item If $\alpha_1\beta_1+\alpha_2\beta_2=0$, then ${\rm deg}({\rm gcd}(\beta_1,\beta_2))\leq
c+k-b_1-b_2-a-2.$\label{cond4}
\end{enumerate}
\end{prop}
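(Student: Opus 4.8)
The plan is to translate the stability problem into the combinatorial data of Theorem~\ref{m} and then test stability against equivariant $\pi_*\oo_Y$-submodules via Proposition~\ref{stab}, exactly as in the proofs of Propositions~\ref{n1} and~\ref{1n}, but now keeping track of the two maps $\phi_0$ and $\phi_1$ simultaneously. First I would dispose of the three ``structural'' conditions. Condition~(2) is immediate from the Euler characteristic formula in the Remark following Theorem~\ref{m}, since for type $(1,2,1)$ one has $\chi(\ff)=(a+1)+(b_1+1)+(b_2+1)+(c+1)$. Condition~(1) is forced by $\cc^*$-equivariance: writing $w_0,w_1,w_2,w_3$ for the weights of $\op(a)$, the two summands of $\ff_1$, and $\op(c)$, equivariance of $\phi_0$ and $\phi_1$ pins down the weights of the monomials $\alpha_i,\beta_i$, so the two determinations $w_3-w_0$ of the weight of the composite, coming from the two paths $\op(a)\to\op(b_i)\otimes\op(k)\to\op(c)\otimes\op(2k)$, must agree; since these two paths carry monomial products of equal degree, agreement of weights makes those products proportional, which is the content of~(1). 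The convention that $0$ is proportional to anything accommodates the vanishing permitted by~(3). Condition~(3) then follows from the fact that a stable sheaf is indecomposable: if two of $\alpha_1,\alpha_2,\beta_1,\beta_2$ vanish, one checks in each case that some summand $\op(b_i)$, $\op(a)$, or $\op(c)$ splits off, contradicting stability.

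Next I would read off the numerical inequalities of~(4) from the obvious equivariant submodules, using Proposition~\ref{stab} so that only equivariant $\gc$ need be tested. The submodule $\gc=(0,0,\op(c))$ gives $\chi(\gc)/r(\gc)=c+1<\tfrac14$, i.e. $c\le-1$; the submodule $(0,\ff_1,\ff_2)$ gives $-a/3<\tfrac14$, i.e. $a\ge 0$ (using~(2)); and, assuming $b_1\ge b_2$, the submodule $(0,\op(b_1),\op(c))$ supported on the larger middle summand together with the top gives $(b_1+c+2)/2<\tfrac14$, i.e. $b_1+c\le-2$.

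The content of~(5) and~(6) comes from the line-bundle kernels and torsion-free cokernels of $\phi_0$ and $\phi_1$, computed by Lemma~\ref{deg}. By \eqref{degk} the kernel $K=\ker\phi_1$ has degree $b_1+b_2-c-k+\deg(\gcd(\beta_1,\beta_2))$, and the submodule $(0,K,0)$ is destabilizing unless $\deg K\le -1$, which is the second inequality of~(5); dually, applying \eqref{degq} to the torsion-free cokernel of $\phi_0$ yields $\deg Q\ge 0$, the first inequality of~(5). The delicate point is~(6): when the composite $\alpha_1\beta_1+\alpha_2\beta_2$ vanishes, $\mathrm{im}\,\phi_0$ lies in $\ker\phi_1$, so $\gc=(\op(a),\,\text{saturation of }\mathrm{im}\,\phi_0,\,0)$ is a genuine rank-two submodule of type $(1,1,0)$; imposing $\chi(\gc)/2<\tfrac14$ sharpens the kernel bound of~(5) by $a+1$ and produces the stronger inequality of~(6).

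Finally I would prove sufficiency by showing that every proper equivariant submodule is no more stable than one of the finitely many extremal ones above. As in Propositions~\ref{n1} and~\ref{1n}, Proposition~\ref{stab} lets me restrict to submodules whose graded pieces are equivariant line subbundles given by monomial inclusion maps; a relation $\sum p_i\alpha_i=0$ (resp. $\sum\beta_i q_i=0$) then forces two monomial products to be proportional and bounds the degree of an arbitrary piece by that of some kernel or torsion-free cokernel already treated. The main obstacle is exactly the bookkeeping for the rank-two middle: a destabilizing submodule may use an arbitrary line subbundle of $\ff_1$ that is neither a coordinate summand nor contained in $\ker\phi_1$, so one must control its image under $\phi_1$ and its preimage under $\phi_0$ at the same time, and the argument bifurcates according to whether the composite $\alpha_1\beta_1+\alpha_2\beta_2$ vanishes (the case governed by~(6)) or not. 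Organizing this case analysis so that each possibility reduces to one of the extremal submodules is the technical heart of the proof.
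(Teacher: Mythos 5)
Your proposal follows the paper's proof essentially step for step: condition (1) is obtained from the composite $(\phi_1\otimes \mathrm{id})\circ\phi_0$ having to be a single monomial by Theorem~\ref{m} (your weight argument yields exactly the proportionality of the two path products that the paper uses), (2) from $\chi(\ff)=1$, (3) from indecomposability, and (4)--(6) by testing, via Proposition~\ref{stab} and Lemma~\ref{deg}, the same extremal equivariant submodules of ranks $(0,0,1)$, $(0,2,1)$, $(0,1,1)$, $(1,1,1)$, $(0,1,0)$, and $(1,1,0)$, including the special rank-$(1,1,0)$ submodule $(\op(a),\ker\phi_1,0)$ that exists precisely when $\alpha_1\beta_1+\alpha_2\beta_2=0$ and gives (6). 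The sufficiency bookkeeping you describe (reducing an arbitrary equivariant submodule to one of these extremal ones by rank) is precisely how the paper's proof is organized, at a comparable level of detail.
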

\begin{proof}
For \eqref{module121} to define an equivariant sheaf, the composition \[ (\phi_1\otimes id_{\op(k)})\circ\phi_0\colon \op(a)\to\op(c)\otimes \op(2k) \] is also given by a monomial by Theorem \ref{m}. In other words, $\alpha_1\beta_2$ and $\alpha_2\beta_1$ are proportional. Hence, item 1. 
We can easily see that the condition $\chi(\ff)=1$ is equivalent to item 2. 

The items 3-6 are for the stability of $\ff$.

If at least two of $\alpha_1,\alpha_2,\beta_1,$ and $\beta_2$ are zero, $\ff$ is decomposable.
Suppose $\gc=(\gc_0,\gc_1,\gc_2)$ is a $\pi_*\oo_Y$-submodule, where
$\gc_0\subset\op(a)$, $\gc_1\subset\op(b_1)\oplus \op(b_2)$ and
$\gc_2\subset\op(c)$. Let
$${\rm rank} (\gc)=({\rm rank} (\gc_0),{\rm rank} (\gc_1),{\rm rank}
(\gc_2)).$$ For each possible choice of the rank of $\gc$, we
examine the stability condition.
\begin{enumerate}
  \item ${\rm rank} (\gc)=(0,0,1)$ : $c\le -1$.
  \item ${\rm rank} (\gc)=(0,2,1)$ : $b_1+b_2+c\le -3$ or $a\ge 0$ by item 2.
  \item ${\rm rank} (\gc)=(0,1,1)$ : Since
the degree of $\gc_1$ is no more than $b_1$ as $b_1\ge b_2$, we have
$b_1+c \le -2$.
\item ${\rm rank} (\gc)=(1,1,1)$ : We can reduce to the case when $\ff/\gc$ is
the torsion-free cokernel of $\phi_0$. So, by Lemma~\ref{deg},
stability condition is
\[b_1 +b_2 - a+k-{\rm deg}({\rm gcd}(\alpha_1,\alpha_2))\ge 0.\]
\item ${\rm rank} (\gc)=(0,1,0)$ : The kernel of $\phi_1$ has degree
$b_1 +b_2-c-k+{\rm deg}({\rm gcd}(\beta_1,\beta_2)).$ So, \[{\rm
deg}({\rm gcd}(\beta_1,\beta_2))\leq c+k-b_1-b_2-1.\]
\item ${\rm rank} (\gc)=(1,1,0)$ : A subsheaf of this type exists
only if the image of $\phi_0$ is in the kernel of $\phi_1$, i.e., if
$\alpha_1\beta_1+\alpha_2\beta_2=0$. In such a case, we take
$\gc_0=\op(a)$ and $\gc_1= {\rm ker} \phi_1$. So,
\[a+\left(b_1 +b_2-c-k+{\rm
deg}({\rm gcd}(\beta_1,\beta_2))\right)\leq -2,\] which is item 6.
\end{enumerate}
\end{proof}

Let the \mo structure of a sheaf $\ff$ of type $(2,2)$ be
\begin{equation}\label{module22}
\phi=\left(\begin{array}{cc} \phi_{11} & \phi_{12} \\ \phi_{21} & \phi_{22}\\ \end{array}\right)
\colon\op(a_1)\oplus\op(a_2)\to(\op(b_1)\oplus \op(b_2))\otimes
\op(k),\end{equation}
where all $\phi_{ij}$ are monomials with coefficient 1.


\begin{prop}\label{22}
Without loss of generality, assume  $a_1\ge a_2$ and $b_1\ge b_2$. The data \eqref{module22} define a stable equivariant sheaf $\ff$ with $\chi(\ff)=1$ if and only if:
\begin{enumerate}
  \item $\phi_{11}\phi_{22}$ and $\phi_{12}\phi_{21}$ are proportional.
  \item $a_1+a_2+b_1+b_2 = -3.$
  \item $\phi_{21}$ is nonzero. No more than one of $\phi_{ij}$ is zero.
  \item $a_1\ge a_2\ge 0$ and $ b_2\le b_1 \le-1$.
  \item ${\rm deg}({\rm
gcd}(\phi_{11},\phi_{21}))\le a_2+b_1+b_2-a_1+k+1$, \\
${\rm deg}({\rm gcd}(\phi_{21},\phi_{22}))\le b_2-b_1-a_1-a_2+k-2$.
\item If $\phi_{11}\phi_{22}=\phi_{12}\phi_{21}$, then\\
$
{\rm deg}({\rm
gcd}(\phi_{11},\phi_{21}))\le b_1+b_2-a_1+k$, and \\
$ {\rm deg}({\rm gcd}(\phi_{11},\phi_{12}))\le b_1+k-a_1-a_2-1.$
\end{enumerate}
\end{prop}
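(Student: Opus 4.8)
The plan is to follow the template established for types $(n,1)$ and $(1,n)$ in Propositions~\ref{n1} and \ref{1n}, together with the $(1,2,1)$ analysis in Proposition~\ref{121}, but now accounting for the genuinely two-dimensional source and target. First I would dispose of the non-stability structural constraints. The composition requirement from Theorem~\ref{m} forces the single map $\phi$ to send a weight space to a weight space, which immediately gives that $\phi_{11}\phi_{22}$ and $\phi_{12}\phi_{21}$ have the same bidegree and hence are proportional (item 1); and the Euler characteristic computation via the first remark after Theorem~\ref{m}, namely $\chi(\ff)=\sum(a_{ij}+1)$, yields $a_1+a_2+b_1+b_2=-3$ (item 2). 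For item 3, note that if two entries vanish then either a row or a column is zero, producing a direct summand and contradicting stability (indecomposability); the claim that $\phi_{21}$ specifically must be nonzero comes from the ordering $a_1\ge a_2$, $b_1\ge b_2$, since $\phi_{21}\colon\op(a_1)\to\op(b_2)\otimes\op(k)$ is the entry of lowest degree and a zero there would destabilize via a sub- or quotient sheaf.

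The heart of the argument is the stability analysis, which I would organize exactly as in Proposition~\ref{121}: enumerate the possible ranks $({\rm rank}\,\gc_0,{\rm rank}\,\gc_1)$ of a $\pi_*\oo_Y$-submodule $\gc=(\gc_0,\gc_1)$ with $\gc_0\subset\ff_0$, $\gc_1\subset\ff_1$ and $\phi(\gc_0)\subset\gc_1\otimes\op(k)$. By Proposition~\ref{stab} it suffices to test equivariant (monomial) submodules. The rank-$(0,1)$ case destabilizes through a rank-one subsheaf of $\ff_1$, giving $b_1\le -1$ hence $b_2\le b_1\le -1$; the rank-$(1,1)$ and rank-$(2,1)$ cases, read off from quotients together with item 2, give $a_2\ge 0$ hence $a_1\ge a_2\ge 0$ (item 4). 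The two degree inequalities of item 5 arise exactly as in the earlier propositions by invoking Lemma~\ref{deg}: the kernel of the restricted $2\times 1$ map $(\phi_{11},\phi_{21})^t$ and the torsion-free cokernel of the restricted $1\times 2$ map $(\phi_{21},\phi_{22})$ produce the stated bounds, and a reduction argument (as in the last paragraphs of Propositions~\ref{n1} and \ref{1n}, writing the inclusion or projection in monomial coordinates and extracting two proportional terms) shows it suffices to test these distinguished submodules. Finally, item 6 is the analogue of item~6 of Proposition~\ref{121}: when $\phi_{11}\phi_{22}=\phi_{12}\phi_{21}$ \emph{exactly} (not merely proportionally, i.e.\ the composition $\op(a_i)\to\op(c)$ vanishes appropriately), an extra rank-$(1,1)$ or rank-$(2,1)$ submodule splits off in which the source maps into the kernel of $\phi$, forcing the two sharper bounds.

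The main obstacle I expect is the bookkeeping in the rank-$(1,1)$ and rank-$(2,1)$ cases, where both $\ff_0$ and $\ff_1$ have rank two: unlike the $(n,1)$ situation, a rank-one equivariant subsheaf $\gc_0\hookrightarrow\ff_0$ is specified by a column of monomials $(p_1,p_2)^t$ and its compatible $\gc_1$ need not be a single coordinate summand, so the reduction to the distinguished kernel/cokernel submodules requires genuinely identifying which two monomial terms are proportional and checking that no other submodule gives a stronger constraint. The degenerate locus where $\phi_{11}\phi_{22}=\phi_{12}\phi_{21}$ is precisely where the clean proportionality of item~1 collapses to equality and the kernel of $\phi$ jumps, which is exactly the phenomenon producing positive-dimensional fixed loci (cf.\ Example~\ref{nonisolated}); handling this boundary case carefully, so that item~6 correctly captures the extra destabilizing submodule without double-counting against item~5, is where I would spend the most care. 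The converse direction—that these six conditions are \emph{sufficient} for stability—follows by checking that an arbitrary equivariant submodule reduces to one of the enumerated distinguished types, just as in the earlier propositions.
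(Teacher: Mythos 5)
Your overall architecture is the paper's: item 1 from equivariance, item 2 from $\chi(\ff)=1$, decomposability when two entries vanish, then a rank-by-rank enumeration of equivariant submodules $(\gc_0,\gc_1)$ tested via Lemma~\ref{deg}, with item 6 coming from the degenerate locus where $\phi$ has rank one. But your identification of the distinguished submodules for item 5 is swapped, and as written that step fails. The kernel of the column map $(\phi_{11},\phi_{21})^t\colon\op(a_1)\to(\op(b_1)\oplus\op(b_2))\otimes\op(k)$ is zero whenever the map is nonzero (the source is a line bundle), and the torsion-free cokernel of the row map $(\phi_{21},\phi_{22})$ is likewise zero (the map is generically surjective); neither produces any inequality. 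What the paper actually does in the rank-$(1,1)$ case is: for $\gc_0=\op(m)$ with $a_2<m\le a_1$, replace $\gc_0$ by $\op(a_1)$ and $\gc_1$ by the saturation of its image, so the quotient is $(\op(a_2),Q)$ with $Q$ the \emph{torsion-free cokernel} of the column $(\phi_{11},\phi_{21})^t$, yielding ${\rm deg}({\rm gcd}(\phi_{11},\phi_{21}))\le a_2+b_1+b_2-a_1+k+1$; and for $m\le a_2$, $b_2<n\le b_1$, replace $\gc_1$ by $\op(b_1)$ and take $\gc_0$ to be the \emph{kernel} of the bottom row $(\phi_{21},\phi_{22})$, yielding the second bound. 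So it is cokernel-of-column and kernel-of-row, precisely the opposite of your assignment.

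There are secondary inaccuracies worth flagging. The claim that $\phi_{21}\ne 0$ because it is ``the entry of lowest degree'' is not an argument; the paper derives it inside the same rank-$(1,1)$ analysis: if $\phi_{21}=0$ then $s_1={\rm deg}({\rm gcd}(\phi_{11},0))=b_1-a_1+k$ and the quotient has degree $a_2+b_2\le-2$, contradicting stability. Your justification of item 3 (``two vanishing entries force a zero row or column'') misses the diagonal pattern $\phi_{11}=\phi_{22}=0$, which decomposes $\ff$ into two chains with no zero row or column. The bound $a_2\ge 0$ comes from the rank-$(1,2)$ submodule (with item 2), not from ranks $(1,1)$ or $(2,1)$; and in item 6 the relevant ranks are $(1,0)$ and $(2,1)$, not $(1,1)$. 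Finally, item 1 cannot be extracted from a ``composition requirement'' --- type $(2,2)$ has a single map $\phi$ and nothing to compose; the paper obtains it from the requirement that $\phi$ and $t.\phi$ be conjugate under automorphisms of $\op(a_1)\oplus\op(a_2)$ and $\op(b_1)\oplus\op(b_2)$, and the resulting relations $r_2=r_1+b_2-b_1$, $s_2=s_1+a_1-a_2$ do real work in the paper's rank-$(1,1)$ and rank-$(1,0)$/$(2,1)$ computations (e.g.\ reconciling the two equal expressions for the kernel and cokernel degrees in item 6). In short: right template, but the central Lemma~\ref{deg} step would collapse as described, and several of the case attributions need to be corrected before the reduction arguments you defer to can be carried out.
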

\begin{proof}
For \eqref{module121} to define an equivariant sheaf, the matrices $\phi$ and $t.\phi$ for $t\in T$ must be conjugate with each other under automorphisms of $\op(a_1)\oplus\op(a_2)$ and $\op(b_1)\oplus \op(b_2)$, which can be seen equivalent to item 1.
We can easily see that the condition $\chi(\ff)=1$ is equivalent to item 2.

The items 3-6 are for the stability of $\ff$.

If at least two of $\phi_{ij}$ are zero, $\ff$ is decomposable.

Let
\begin{align}r_1={\rm deg}({\rm gcd}(\phi_{11},\phi_{12})), \hspace{1em} & r_2={\rm
deg}({\rm gcd}(\phi_{21},\phi_{22})),\notag\\ s_1={\rm deg}({\rm
gcd}(\phi_{11},\phi_{21})),\hspace{1em} & s_2={\rm deg}({\rm
gcd}(\phi_{12},\phi_{22})).\notag
\end{align}
Then by item 1,
\begin{equation}\label{gcd}r_2=r_1+b_2-b_1\text{ and }
s_2=s_1+a_1-a_2,\end{equation} provided that all $\phi_{ij}$ are
nonzero.

 Suppose $\gc=(\gc_0,\gc_1)$ is a
$\pi_*\oo_Y$-submodule. For each possible choice of the rank of
$\gc$, we examine the stability conditions.
\begin{enumerate}
  \item ${\rm rank} (\gc)=(0,1)$ : $ b_1\le -1 $.
  \item ${\rm rank} (\gc)=(0,2)$ : $b_1+b_2\le -2$.
  \item ${\rm rank} (\gc)=(1,2)$ : $a_2\ge 0$.
  \item ${\rm rank} (\gc)=(1,1)$ : Let $\gc_0=\op(m)$ and
  $\gc_1=\op(n)$. If $a_2<  m\le  a_1$, $\gc_0$ is a subsheaf of
  $\op(a_1)$. Hence, we can replace $\gc_0$ by $\op(a_1)$ and take
  $\gc_1$ to be the saturation of the image of $\op(a_1)$ under
  $\phi$. The quotient is $(\op(a_2),$ the torsion-free cokernel of $\phi |_{\op(a_1)})$.
  So, for $\ff$ to be stable, we must have
  \[a_2+b_1+b_2-a_1+k-s_1\ge -1,\] by Lemma~\ref{deg}.
  Note that if $\phi_{21}$ is zero, $s_1=b_1-a_1+k$. Then the
  quotient has degree $a_2+b_2$. This contradicts the stability, since we have $a_2+b_2\le -2$ by items 2 and 4 of the proposition which are proved above. 

  Now suppose $m\le a_2$. If $n\le b_2$, since $a_2+b_2\le -2$, there
  is nothing to check. If $b_2< n\le b_1$, we can replace $\gc_1$
  by $\op(b_1)$ and take $\gc_0$ to be the inverse image of $\op(b_1)$, that is, the kernel of the map \[(\phi_{21},\phi_{22})\colon
  \op(a_1)\oplus \op(a_2) \to \op(b_2)\otimes \op(k).\]
  Then the condition is
  \[b_1+ a_1+a_2-b_2-k+r_2\le -2.\]
  \item ${\rm rank} (\gc)=(1,0)$ or $(2,1)$ : A subsheaf of these types exists
  only if the image of $\phi$ has rank 1, in other words, if
  $\phi_{11}\phi_{22}=\phi_{12}\phi_{21}$.
Then, by Lemma \ref{deg} and \eqref{gcd}, the torsion-free cokernel of $\phi$ has degree
\[b_1+b_2-a_1+k-s_1=b_1+b_2-a_2+k-s_2, \]
and the kernel of $\phi$ has degree
\[a_1+a_2-b_1-k+r_1=a_1+a_2-b_2-k+r_2. \]
Hence the conditions are
\[s_1\le b_1+b_2-a_1+k \text{ and } r_1\le b_1+k-a_1-a_2-1.\]
\end{enumerate}
\end{proof}

\begin{rmk}
As we will see in the next example, all positive-dimensional loci of
type $(1,2,1)$ can be expressed as a GIT quotient of $(\po)^4$ by
the action of $SL_2(\cc)$. While the linearization may be different,
the quotient is always isomorphic to $\po$. Similar argument for
type $(2,2)$ holds. So, we can see that all $T$-fixed loci in degree
4 are either isolated points or $\po$.
\end{rmk}

\begin{ex}
In Example~\ref{nonisolated}, $\ff_0$, $\ff_1$ and $\ff_2$ are
unchanged along the one-dimensional torus fixed locus. Condition
\eqref{cond4} in Propositions~\ref{121} and \ref{22} suggests that this
is not true in general.

Assume $k=3$ and let $\ff_0=\op(1)$, $\ff_1=\op(-1)\oplus \op(-1)$
and $\ff_2=\op(-2)$. The $\pi_*\oo_Y$-module structure is
\[\phi_0 =
\left(
  \begin{array}{c} x\\y\\
  \end{array}
\right) \colon \op(1)\to (\op(-1)\oplus \op(-1))\otimes \op(3),\]
\[\phi_1=\left(
  \begin{array}{cc} c_1xy & c_2x^2 \\
  \end{array}
\right) \colon \op(-1)\oplus \op(-1)\to \op(-2)\otimes \op(3),\]
where $c_1$ and $c_2$ are in $\cc$. By Proposition~\ref{121}, we can
check the corresponding sheaf $\ff(c_1,c_2)$ is stable unless
$c_1=-c_2$.

As the $T$-fixed locus $M_d(k)$ is compact, the limit of the above
family at $c_1=-c_2$ exists in $M_d(k)^T$. To see what the limit is,
we need to examine $\Delta$-family described in Proposition
\ref{class}.

Assume that the fixed point in the open set $U_{\sigma_1}$ is given by
$x=0$ and the fixed point in $U_{\sigma_2}$ by $y=0$. Then the above
\mo structure has weight space decomposition as Figure~\ref{ex121}.
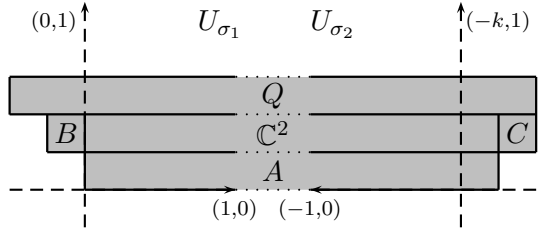
\begin{figure}\begin{center}
\begin{pspicture}(7,3)
\rput(.5,0){ %
\pspolygon[fillstyle=solid,
fillcolor=lightgray,linestyle=none](0.5,0.5)(6,.5)(6,1)(6.5,1)(6.5,2)(-.5,2)(-.5,1.5)(0,1.5)(0,1)(.5,1)(.5,.5)
\psline[linestyle=dashed]{->}(-.5,.5)(2.5,.5)\psline[linestyle=dashed]{->}(6.5,.5)(3.5,.5)
\psline[linestyle=dashed]{->}(0.5,0)(.5,3)
\psline[linestyle=dashed]{->}(5.5,0)(5.5,3)
\rput(2.3,2.7){$U_{\sigma_1}$}\rput(3.8,2.7){$U_{\sigma_2}$}
\rput(2.5,0.2){$^{(1,0)}$}\rput(0.1,2.7){$^{(0,1)}$}
\rput(3.5,0.2){$^{(-1,0)}$}\rput(6,2.7){$^{(-k,1)}$}
\psline[linewidth=.3mm](.5,.5)(2.5,.5)\psline[linestyle=dotted](2.5,.5)(3.5,.5)\psline[linewidth=.3mm](3.5,.5)(6,.5)
\rput(0,.5){
\psline[linewidth=.3mm](0,.5)(2.5,.5)\psline[linestyle=dotted](2.5,.5)(3.5,.5)\psline[linewidth=.3mm](3.5,.5)(6.5,.5)}
\rput(0,1){
\psline[linewidth=.3mm](-.5,.5)(2.5,.5)\psline[linestyle=dotted](2.5,.5)(3.5,.5)\psline[linewidth=.3mm]
(3.5,.5)(6.5,.5)} \rput(0,1.5){
\psline[linewidth=.3mm](-.5,.5)(2.5,.5)\psline[linestyle=dotted](2.5,.5)(3.5,.5)\psline[linewidth=.3mm](3.5,.5)(6.5,.5)}
\psline[linewidth=.3mm](6,.5)(6,1.5)\psline[linewidth=.3mm](6.5,1)(6.5,2)
\psline[linewidth=.3mm](-.5,2)(-.5,1.5)
\psline[linewidth=.3mm](0,1.5)(0,1)\psline[linewidth=.3mm](.5,1.5)(.5,.5)
\rput(.25,1.25){$B$}\rput(6.25,1.25){$C$}\rput(3,1.75){$Q$}\rput(3,.75){$A$}\rput(3,1.25){$\cc^2$}
 }
\end{pspicture}\end{center}
\caption{Sheaf of type $(1,2,1)$}\label{ex121}
\end{figure}

In Figure~\ref{ex121}, $A,B,C,$ and $Q$ are one dimensional. By
Proposition~\ref{class}, $T$-fixed sheaves with such weight space
decomposition are determined by inclusions of $A,$ $B,$ and $C$ into
$\cc^2$ and a surjection $\cc^2\to Q$. The $SL_2(\cc)$ action on
$\cc^2$ via change of basis encodes isomorphism between sheaves. See
\cite[Chapter 3]{kool} for a detailed discussion.

We identify $\cc^2\to Q$ with its kernel $K$ so that $A,B,C,$ and $K$
are in $Gr(1,\cc^2)\simeq \pp^1$. We want to relate Gieseker
stability to GIT stability condition for the action of $SL_2(\cc)$
on $(\pp^1)^4$. It can be checked that the associated sheaf is
Gieseker stable unless \begin{equation}\label{gieseker}A=B\text{ or
}A=C\text{ or }A=K\text{ or }B=C=K.\end{equation} Meanwhile, a point
$(p_1,p_2,p_3,p_4)\in (\pp^1)^4$ is GIT stable with respect to a
line bundle $\oo(k_1, k_2, k_3,k_4)$ if and only if for any point $p
\in \po$
\begin{equation}\label{git}\sum_{p=p_i} k_i < \frac{1}{2}\sum_{i=1}^{4}k_i.\end{equation}
See \cite[Theorem 11.2]{dolgachev}, \cite[Section 4.4]{mfk}. If we
take $k_1=2$, $k_2=k_3=k_4=1$, these two conditions agree with each
other when we let $(p_1,p_2,p_3,p_4)=(A,B,C,K)$. This is an example
of matching GIT stability and Gieseker stability discussed in
\cite[Chapter 3]{kool}.

Therefore, the $T$-fixed locus is
\[(\po)^4\sslash SL_2(\cc)\simeq \po.\]
All positive-dimensional fixed loci can be analyzed similarly.

\comments{The quotient map $(\po)^4\to \po$ is given by the
\emph{cross-ratio}
\[r(p_1,p_2,p_3,p_4)=\frac{(p_2-p_1)(p_4-p_3)}{(p_3-p_1)(p_4-p_2)},\]
where $p_1,p_2,p_3,p_4$} 

The condition $c_1=-c_2$ is equivalent to
$A=K$. It is easy to check that at the limit in $(\po)^4\sslash
SL_2(\cc)$, we have $B=C$ and $A, B,$ and $K$ are distinct. By reading
equivariant vector bundles in each rows, we can see that the limit has
\mo structure
\[\phi_0 =
\left(
  \begin{array}{c} xy\\1\\
  \end{array}
\right) \colon \op(1)\to (\op\oplus \op(-2))\otimes \op(3),\]
\[\phi_1=\left(
  \begin{array}{cc} x & x^2y \\
  \end{array}
\right) \colon \op\oplus \op(-2)\to \op(-2)\otimes \op(3).\] Note
that since $xy$ is a multiple of $1$, or $x^2y$ is a multiple of
$x$, we can set all the coefficients of monomials to be 1 up to
isomorphism.
\end{ex}
\begin{rmk}
Based on the classification of $T$-equivariant stable sheaves
studied above and in Section~\ref{classification}, we can compute
$N_4(k)$. The author has verified that the result is consistent with
\eqref{n4} when $k\le 100$ using a computer program. However, we
do not have a proof for general $k$.
\end{rmk}

\section{Equivariant Residue}\label{residue}
In this section, we compute the virtual tangent space of $M_d(k)$ and verify the signs in the BPS invariants.
The virtual tangent space at $\ff\in M_d(k)$ is
\[ \Ext^1_X(\ff, \ff)- \Ext^2_X(\ff,\ff).\]
Since $T$ preserves the canonical Calabi-Yau form, the canonical
bundle on $X$ is trivial with trivial weight. By equivariant Serre
duality,
$$ \Ext^1_X(\ff, \ff)\simeq \Ext^2_X(\ff,\ff)^*$$
as $T$-representations. So, the dual weights of the moving parts
will be canceled and we just count signs.

Let $\hk$ be the Hirzebruch surface whose toric fan has ray
generators $u_1=(-1,k)$, $u_2=(0,1)$, $u_3=(1,0)$, and $u_4=(0,-1)$.
Denote the corresponding divisors by $D_1$, $D_2$, $D_3$, and $D_4$.
The total space $Y$ of $\oo_\po(k)$ can be described as a toric
variety by the fan $\{ {\rm Cone}(u_3, u_4),{\rm Cone}(u_4,u_1)\}$.
Hence, $Y$ is a subvariety of $\hk$ and the zero section of $Y$ is
the divisor $D_4$. Let $i\colon Y \to \hk$ be the inclusion.

By the equivalence
\[D_1\sim D_3 \text{ and } D_2\sim D_4 -k D_3, \]
any divisor on $\hk$ can be expressed as $aD_3 +bD_4$ for integers $a$ and $b$. We have
\[aD_3 +bD_4 \text{ is ample if and only if } a,b>0.  \]

We fix an ample line bundle $D=2D_3+D_4$. Then, we have a well-defined moduli space $$M_\hk(d)= \{ \ff \text{ sheaf on } \hk \colon c_1(\ff)=dD_4, \chi(\ff)=1, D\text{-(semi)stable}\}.$$
Let $\ff$ be a sheaf on $Y$ supported on a curve of class $d\linecl$. Then $i_*\ff$ is supported on a curve of class $dD_4$. Then, since $D_4\cdot D=k+2$, we have
\[\chi(i_*\ff\otimes \oo(nD))=d(k+2)n +\chi(\ff) = P_\ff((k+2)n),\]
where $P_\ff(n)$ is the Hilbert polynomial defined in \eqref{eq:hilblocalp1}.
Thus, since $k+2> 0$, $i_*\ff$ is $D$-semistable if and only if
$\ff$ is semistable. Hence, $i_*$ induce an injective morphism
from $M_d(k)$ to $M_\hk(d)$.

\begin{prop}\label{prop:smoothness}
$M_\hk(d)$ is a smooth projective scheme of dimension $kd^2+1$.
\end{prop}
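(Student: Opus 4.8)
The plan is to deduce projectivity from the standard GIT construction of moduli of Gieseker-semistable sheaves, and to prove smoothness by showing that the obstruction space $\Ext^2_\hk(\ff,\ff)$ vanishes at every point of $M_\hk(d)$.

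For projectivity, note that $c_1(\ff)=dD_4$ together with $\chi(\ff)=1$ fixes the Hilbert polynomial of $\ff$ with respect to the ample divisor $D$. Hence $M_\hk(d)$ is the moduli space of $D$-semistable sheaves on the projective surface $\hk$ with this Hilbert polynomial, which is a projective scheme by the construction in \cite{hl}. Since the multiplicity $d$ and $\chi(\ff)=1$ are coprime there are no strictly semistable sheaves, so semistability coincides with stability and $M_\hk(d)$ is projective.

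For smoothness, recall that the deformation theory of a sheaf has tangent space $\Ext^1_\hk(\ff,\ff)$ and obstruction space $\Ext^2_\hk(\ff,\ff)$, so it suffices to prove $\Ext^2_\hk(\ff,\ff)=0$ for every stable $\ff$. By Serre duality on $\hk$,
\[\Ext^2_\hk(\ff,\ff)\simeq \Hom_\hk(\ff,\ff\otimes K_\hk)^*.\]
The crucial point is that tensoring by $K_\hk$ strictly decreases the slope. From the intersection numbers $D_3\cdot D_4=1$ and $D_4^2=k$ one computes $K_\hk\cdot D_4=-(k+2)$, so $\ff\otimes K_\hk$ has the same multiplicity $d$ but $\chi(\ff\otimes K_\hk)=1-d(k+2)$, hence slope $\frac1d-(k+2)$. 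As $k\ge -1$ gives $k+2>0$, this is strictly less than the slope $\frac1d$ of $\ff$. Tensoring by a line bundle preserves stability, so both $\ff$ and $\ff\otimes K_\hk$ are stable; a nonzero homomorphism between them would have an image that is simultaneously a quotient of $\ff$ and a subsheaf of $\ff\otimes K_\hk$, forcing $\frac1d\le\text{slope}\le\frac1d-(k+2)$, a contradiction. Therefore $\Hom_\hk(\ff,\ff\otimes K_\hk)=0$, so $\Ext^2_\hk(\ff,\ff)=0$ and $M_\hk(d)$ is smooth.

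Finally, for the dimension, Hirzebruch--Riemann--Roch gives $\chi(\ff,\ff)=\int_\hk {\rm ch}(\ff)^\vee\,{\rm ch}(\ff)\,{\rm td}(\hk)$; since ${\rm ch}_0(\ff)=0$ the Todd corrections drop out and this equals $-c_1(\ff)^2=-d^2D_4^2=-kd^2$. Combining $\dim\Hom_\hk(\ff,\ff)=1$ (stability) with $\Ext^2_\hk(\ff,\ff)=0$ yields $\dim\Ext^1_\hk(\ff,\ff)=kd^2+1$, which is the tangent-space dimension at every point; hence $M_\hk(d)$ is smooth of pure dimension $kd^2+1$. The main obstacle is the $\Ext^2$-vanishing, and the entire argument rests on the sign of $K_\hk\cdot D_4=-(k+2)$, which is exactly where the hypothesis $k\ge -1$ enters; the remaining steps are routine Riemann--Roch bookkeeping.
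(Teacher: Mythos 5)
Your skeleton is the paper's own: Serre duality reduces smoothness to the vanishing of $\Hom_{\hk}(\ff,\ff\otimes K)$, the sign comes from $K\cdot D_4=-(k+2)$ exactly as in the paper, the Riemann--Roch dimension count is identical, and your projectivity paragraph correctly fills in a point the paper leaves implicit. The problem is the step ``tensoring by a line bundle preserves stability, so $\ff\otimes K_{\hk}$ is stable,'' which carries the upper bound in your slope chain and is false in this setting. For pure one-dimensional sheaves on a surface of Picard rank two, twisting a subsheaf $\gc\subseteq\ff$ by $K$ shifts its reduced slope by $(c_1(\gc)\cdot K)/(c_1(\gc)\cdot D)$, which depends on the direction of $c_1(\gc)$ in the effective cone: this ratio is $-1$ for the class $D_4\sim kD_3+D_2$ but $(k-2)/2$ for the negative section class $D_2$. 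Concretely, for $k\ge 1$ take $C=D_2+F_1+\cdots+F_k\in|D_4|$, the negative section plus $k$ distinct fibers. One checks that $\oo_C$ is stable with $\chi=1$ (every proper saturated subsheaf has slope $\le 0<1/(k+2)$), yet the subsheaf $\gc\simeq\op(-k)$ of $\oo_C$ supported on $D_2$ has $\chi(\gc\otimes K)=1-k+(k-2)=-1$, hence $D$-slope $-1/2$, strictly larger than the slope $-(k+1)/(k+2)$ of $\oo_C\otimes K$. So $\oo_C\otimes K$ is unstable, and your inequality ``$\mathrm{slope}(\mathrm{im}\,f)\le \frac1d-(k+2)$'' is unjustified when the image is supported on $D_2$-heavy classes. (Relatedly, your slope $\chi/d$ with $d$ the multiplicity along $\linecl$ is not even defined for subsheaves supported on fiber classes; one must normalize by $c_1\cdot D$.)

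What survives of your argument: if $f$ is injective, then $\ff\otimes K^{-1}$ embeds in $\ff$ with the same $c_1$ but Euler characteristic $1+d(k+2)>1$, which is impossible; and if $f$ has a kernel, comparing the quotient inequality for $I=\mathrm{im}\,f$ with the subsheaf inequality for $I\otimes K^{-1}\subset\ff$ gives $c_1(I)\cdot K>0$, i.e.\ $(k-2)b>2a\ge 0$ for $c_1(I)=aD_3+bD_2$, which is a contradiction only for $k\le 2$. So for $k\ge 3$ your proof leaves open precisely the case of a non-injective $f$ whose image is supported on negative-section-heavy classes; that is the genuine gap. Note that the paper's own proof does not assert stability of $\ff\otimes K$: it deduces the vanishing from $\chi(\ff\otimes K)<\chi(\ff)$ at equal multiplicity ``by stability of $\ff$,'' a terse invocation of the standard vanishing $\Hom(\ff,\gc)=0$ for $p_\gc<p_\ff$ --- which, as usually stated, also needs semistability of the target, so the same residual case must be handled there too (for instance by composing $f$ with multiplication by sections of the effective anticanonical divisor $-K\sim(k+2)D_3+2D_2$ and using simplicity of $\ff$ to reduce to the injective case, together with an analysis of the base locus for $k\ge 3$). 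As written, your proof establishes the key $\Ext^2$-vanishing only for $k\le 2$ or for injective maps, not in general.
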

\begin{proof}
By Serre duality, $\Ext^2(\ff,\ff)=\Hom(\ff,\ff\otimes K)^*.$ Since $$c_1(\ff)\cdot K= dD_4\cdot K=-d(k+2)<0,$$ we have $\chi(\ff\otimes K)<\chi(\ff)$ by the Riemann-Roch theorem, while $r(\ff\otimes K)=r(\ff)$. Hence, by stability of $\ff$, we have $\Hom(\ff,\ff\otimes K)=0$.
Therefore, there is no obstruction and $M_\hk(d)$ is a smooth projective scheme.

We compute the dimension of $\Ext^1(\ff,\ff)$ using the Riemann-Roch theorem.
$$\chi(\ff,\ff)=1-{\rm dim}\Ext^1(\ff,\ff)=\int_\hk {\rm ch}^\vee(\ff){\rm ch}(\ff){\rm td}(\hk)$$
Since the rank of $\ff$ is zero and $c_1(\ff)= d D_4$, the degree 2
term of right-hand side is $ - d^2 D_4^2=-kd^2.$ Therefore,
$${\rm dim}\Ext^1(\ff,\ff)=1-\chi(\ff,\ff)=kd^2+1.$$
Thus, ${\rm dim} M_\hk(d)=kd^2+1.$
\end{proof}

\begin{cor}
\[n_d(k)=(-1)^{kd^2+1}e_{\rm top}(M_d(k))\]
\end{cor}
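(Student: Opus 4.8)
The plan is to deduce the corollary from the smoothness established in Proposition~\ref{prop:smoothness} together with the symmetric structure of the obstruction theory on the Calabi--Yau threefold $X$. First I would observe that $i_*$ realizes $M_d(k)$ as an \emph{open} subscheme of $M_\hk(d)$: a sheaf $i_*\ff$ of class $dD_4$ supported away from the section $D_2$ is an open condition, so $M_d(k)$ is the open locus of $M_\hk(d)$ consisting of sheaves disjoint from $D_2$. Hence $M_d(k)$ is smooth of dimension $kd^2+1$, and since $\Ext^i$ is computed locally along the (compact) support, $\Ext^i_\hk(\ff,\ff)=\Ext^i_Y(\ff,\ff)$; in particular the tangent space is $T_{M}=\Ext^1_Y(\ff,\ff)$ and $\Ext^2_Y(\ff,\ff)=0$.

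Next I would identify the threefold obstruction theory of Definition~\ref{bps} with the cotangent complex of $M_d(k)$. Since $Y\subset X$ is a divisor with normal bundle $N\simeq K_Y$ (adjunction plus triviality of $K_X$), the derived restriction $Li^*i_*\ff\simeq \ff\oplus \ff\otimes N^{-1}[1]$ together with Serre duality on the surface $Y$ yields, as $T$-representations,
\[\Ext^p_X(\ff,\ff)\simeq \Ext^p_Y(\ff,\ff)\oplus \Ext^{3-p}_Y(\ff,\ff)^*.\]
Feeding in $\Ext^2_Y(\ff,\ff)=0$ gives $\Ext^1_X(\ff,\ff)\simeq \Ext^1_Y(\ff,\ff)=T_{M}$ and $\Ext^2_X(\ff,\ff)\simeq \Ext^1_Y(\ff,\ff)^*\simeq \Omega_{M}$, both of dimension $kd^2+1$. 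Thus the virtual tangent space is $T^{\rm vir}=T_M-\Omega_M$, i.e.\ the obstruction bundle is the honest cotangent bundle $\Omega_M$.

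Then I would run the virtual localization. On a component $M_i^T$ set $V=\Ext^1_X(\ff,\ff)=T_M|_{M_i^T}$ and split $V=V^{\rm fix}\oplus V^{\rm mov}$, with $m_i=\dim V^{\rm fix}=\dim M_i^T$ and $r_i=\dim V^{\rm mov}$. The virtual normal bundle is $N_i^{\rm vir}=V^{\rm mov}-(V^{\rm mov})^*$, and because dualizing negates every equivariant Chern root, $e((V^{\rm mov})^*)=(-1)^{r_i}e(V^{\rm mov})$, whence $1/e(N_i^{\rm vir})=(-1)^{r_i}$. The fixed part of the obstruction theory is $T_{M_i^T}-\Omega_{M_i^T}$, so $[M_i^T]^{\rm vir}=e(\Omega_{M_i^T})\cap[M_i^T]$, of degree $(-1)^{m_i}e_{\rm top}(M_i^T)$ by Gauss--Bonnet. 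Each component therefore contributes $(-1)^{r_i+m_i}e_{\rm top}(M_i^T)=(-1)^{kd^2+1}e_{\rm top}(M_i^T)$, the sign being uniform since $r_i+m_i=\dim\Ext^1_X(\ff,\ff)=kd^2+1$. Summing over $i$ and applying Theorem~\ref{thm:localization} to replace $e_{\rm top}(M_d(k)^T)$ by $e_{\rm top}(M_d(k))$ yields the claim.

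The main obstacle I anticipate is the bookkeeping that forces the per-component sign to collapse to the uniform $(-1)^{kd^2+1}$: one must check that the moving contribution is exactly $(-1)^{r_i}$ rather than a nontrivial equivariant class, and that the fixed-part virtual class is governed precisely by $\Omega_{M_i^T}$. Both hinge on the identification $\Ext^2_X(\ff,\ff)\simeq\Omega_M$, which relies on the vanishing $\Ext^2_Y(\ff,\ff)=0$ inherited from the compact model $\hk$. Verifying that the local-surface decomposition of $\Ext^\bullet_X$ is genuinely $T$-equivariant, so that the Serre-duality pairing carries the trivial weight of the Calabi--Yau form, is the delicate step; once it is in place the sign and the reduction to $e_{\rm top}$ follow formally.
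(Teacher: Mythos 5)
Your argument is correct, and it reaches the conclusion by a genuinely more explicit route than the paper. The paper's own proof is two lines: $M_d(k)$ is an open subscheme of $M_{\hk}(d)$, hence smooth of dimension $kd^2+1$ by Proposition~\ref{prop:smoothness}, and the sign formula is then quoted as a ``general property of Donaldson--Thomas-type invariants with symmetric obstruction theory,'' citing Behrend. You instead unpack that citation: you justify the openness (support disjoint from $D_2$ is an open condition), identify the obstruction space via the divisor $Y\subset X$ with $N_{Y/X}\simeq K_Y$, conclude $\Ext^1_X(\ff,\ff)\simeq \Ext^1_Y(\ff,\ff)$ and $\Ext^2_X(\ff,\ff)\simeq \Ext^1_Y(\ff,\ff)^*$, and then run the virtual localization by hand, getting the factor $(-1)^{r_i}$ from $e(N_i^{\rm vir})$ and $(-1)^{m_i}e_{\rm top}(M_i^T)$ from the fixed-part class $e(\Omega_{M_i^T})\cap[M_i^T]$, with $r_i+m_i=kd^2+1$ uniform across components, before summing via Theorem~\ref{thm:localization}. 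What each approach buys: the paper's citation is shorter, but Behrend's theorem as stated concerns compact moduli and weighted Euler characteristics, whereas the invariant here is defined by equivariant residues on a noncompact moduli space; your localization computation is exactly the argument needed to make the citation honest in this setting, and it also makes visible where the Calabi--Yau weight-zero Serre duality enters (as the paper anticipates in its remark after Definition~\ref{bps}). One small imprecision: the splitting $Li^*i_*\ff\simeq \ff\oplus\ff\otimes N^{\vee}[1]$, and hence the direct-sum formula $\Ext^p_X(\ff,\ff)\simeq\Ext^p_Y(\ff,\ff)\oplus\Ext^{3-p}_Y(\ff,\ff)^*$, is not automatic for a divisor; but it is also not needed. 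The distinguished triangle $\ff\otimes N^{\vee}[1]\to Li^*i_*\ff\to\ff$ yields a long exact sequence, and the vanishings $\Hom_Y(\ff,\ff\otimes K_Y)\simeq\Ext^2_Y(\ff,\ff)^*=0$ (precisely the stability argument in the proof of Proposition~\ref{prop:smoothness}) already force $\Ext^1_X(\ff,\ff)\simeq\Ext^1_Y(\ff,\ff)$ and $\Ext^2_X(\ff,\ff)\simeq\Ext^1_Y(\ff,\ff)^*$, after which your sign bookkeeping goes through unchanged.
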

\begin{proof}
$M_d(k)$ is open subscheme of $M_\hk(d)$, hence smooth of dimension
$kd^2+1$. Then, this is a consequence of general properties of
Donaldson-Thomas-type invariants with symmetric obstruction theory
\cite{behrend}.
\end{proof}


\end{document}